\documentclass[a4paper,final,english]{artikel} 

\title{Bernoulli crossed products without almost periodic weights}
\author{Peter Verraedt\thanks{Supported by a Ph.~D.~fellowship of the Research Foundation -- Flanders (FWO).}}
\date{}
\address{KU~Leuven, Department of Mathematics, Leuven (Belgium), peter.verraedt@wis.kuleuven.be}

\version{}

\usepackage{amsmath}
\usepackage{amssymb}
\usepackage{amsthm}
\usepackage{hyperref}
\usepackage{babel}
\usepackage{ifdraft}
\usepackage[capitalize]{cleveref} 
\usepackage[shortlabels]{enumitem}

\setlist{itemsep=1pt, topsep=5pt, partopsep=0pt}
\setlength{\parindent}{0pt}
\setlength{\parskip}{1ex plus 0.5ex minus 0.2ex}
\AtBeginDocument{%
\setlength{\abovedisplayskip}{6pt}%
\setlength{\belowdisplayskip}{6pt}%
}

\hypersetup{pdfnewwindow=true,colorlinks=true,linkcolor=black,citecolor=black,filecolor=black,urlcolor=black}


\newtheorem{thmstar}{Theorem}
\Crefname{thmstar}{Theorem}{Theorems}

\Crefname{corstar}{Corollary}{Corollaries}

\newtheorem{theorem}{Theorem}[section]
\Crefname{theorem}{Theorem}{Theorems}

\newtheorem{lemma}[theorem]{Lemma}
\Crefname{lemma}{Lemma}{Lemmas}

\Crefname{proposition}{Proposition}{Propositions}

\Crefname{corollary}{Corollary}{Corollaries}

\theoremstyle{definition}
\newtheorem{definition}[theorem]{Definition}
\Crefname{definition}{Definition}{Definitions}

\newtheorem{remark}[theorem]{Remark}
\Crefname{remark}{Remark}{Remarks}

\newcommand{\I}{\operatorname{I}}
\newcommand{\II}{\operatorname{I{\kern -0.2ex}I}}
\newcommand{\III}{\operatorname{I{\kern -0.2ex}I{\kern -0.2ex}I}}

\newcommand{\Inn}{\operatorname{Inn}}
\newcommand{\C}{\mathbb{C}}
\newcommand{\F}{\mathbb{F}}
\newcommand{\R}{\mathbb{R}}
\newcommand{\N}{\mathbb{N}}
\newcommand{\Rdual}{{\mathbb{R}^+_0}}
\newcommand{\Rhat}{\hat{\mathbb{R}}}

\newcommand{\actson}{\curvearrowright}

\newcommand{\Aut}{\operatorname{Aut}}
\newcommand{\Out}{\operatorname{Out}}
\newcommand{\lspan}{\operatorname{span}}
\newcommand{\T}{\mathbb{T}}
\newcommand{\Z}{\mathbb{Z}}
\newcommand{\cF}{\mathcal{F}}

\newcommand{\id}{\mathord{\operatorname{id}}}

\newcommand{\vphi}{\varphi}

\newcommand{\Tr}{\operatorname{Tr}}

\newcommand{\ovt}{\mathbin{\overline{\otimes}}}
\newcommand{\bigovt}{\mathbin{\overline\bigotimes}}

\newcommand{\cZ}{\mathcal{Z}}
\newcommand{\cK}{\mathcal{K}}

\newcommand{\ot}{\otimes}

\newcommand{\Ad}{\operatorname{Ad}}
\newcommand{\dpr}{^{\prime\prime}}

\newcommand{\Ntil}{\widetilde{N}}

\newcommand{\cN}{\mathcal{N}}

\newcommand{\cC}{\mathcal{C}}

\newcommand{\fK}{\mathfrak{K}}
\newcommand{\Ptil}{\widetilde{P}}

\newcommand{\inp}[2]{\langle#1, #2\rangle}
\newcommand{\fdot}{\mathop{\cdot}}

\newcommand{\defeq}{\mathrel{\mathop:}=}

\newcommand{\ap}{\text{ap}}

\renewcommand{\P}{P}
\renewcommand{\hat}{\widehat}
\renewcommand{\tilde}{\widetilde}

\geometry{twoside}

\begin{document}
\maketitle

\begin{abstract}
We prove a classification result for a large class of noncommutative Bernoulli crossed products $(P,\phi)^\Lambda \rtimes \Lambda$ without almost periodic states. Our results improve the classification results from \cite{vaes-verraedt;classification-type-III-bernoulli-crossed-products}, where only Bernoulli crossed products built with almost periodic states could be treated. 
We show that the family of factors $(P,\phi)^\Lambda \rtimes \Lambda$ with $P$ an amenable factor, $\phi$ a weakly mixing state (i.e.~a state for which the modular automorphism group is weakly mixing) and $\Lambda$ belonging to a large class of groups, is classified by the group $\Lambda$ and the action $\Lambda \curvearrowright (P, \phi)^\Lambda$, up to state preserving conjugation of the action.
\end{abstract}

\section{Introduction}
Distinguishing different von Neumann algebras is one of the main research goals in operator algebra theory. 
One of the main achievements in this direction was made by Popa, whose deformation/rigidity theory provided powerful classification results for type $\II_1$ factors such as group measure space constructions $L^\infty(X)\rtimes \Gamma$ of free ergodic probability measure preserving actions $\Gamma \curvearrowright (X,\mu)$. 
Popa's deformation/rigidity theory has later been combined with modular theory to study type $\III$ factors, such as group measure space constructions for nonsingular actions \cite{vaes-houdayer;type-III-unique-cartan}, Shlyakhtenko's free Araki-Woods factors \cite{houdayer;structural-results-free-araki-woods} and free quantum group factors \cite{isono;prime-factorization-free-quantum-group-factors}. In the same spirit, Stefaan Vaes and the author provided a classification result for noncommutative Bernoulli crossed products constructed with almost periodic states \cite{vaes-verraedt;classification-type-III-bernoulli-crossed-products}. 

However, obtaining a full classification for families of type $\III$ factors without using the existence of almost periodic states, is extremely difficult. Even the free Araki-Woods factors are only fully classified when the underlying one-parameter group is almost periodic \cite{shlyakhtenko;free-quasi-free-states;pacific}. Likewise, the classification of noncommutative Bernoulli crossed products in \cite{vaes-verraedt;classification-type-III-bernoulli-crossed-products} depends heavily on the fact that they have a discrete decomposition $N \rtimes \Gamma$, with $\Gamma$ a discrete group acting on a type $\II_\infty$ factor $N$. To go beyond almost periodic states, essentially new techniques are required. In the current paper, we provide a new argument showing classification results for Bernoulli crossed products with states that are not almost periodic. 

The noncommutative Bernoulli crossed products, introduced by Connes \cite{connes;almost-periodic-states}, are constructed as follows.
Let $(\P,\phi)$ be any von Neumann algebra equipped with a normal faithful state $\phi$, and let $\Lambda$ be a countably infinite group. Then take the infinite tensor product $\P^\Lambda = \bigovt_{g \in \Lambda} \P$ indexed by $\Lambda$, with respect to the state $\phi$. The group $\Lambda$ acts on $\P^\Lambda$ by shifting the tensor factors. This action is called a \emph{noncommutative Bernoulli action}, and the factor $\P^\Lambda \rtimes \Lambda$ is called a \emph{Bernoulli crossed product}. In \cite{vaes-verraedt;classification-type-III-bernoulli-crossed-products} it was shown that if $\phi$ is an almost periodic state, then the factors $(P,\phi)^{\F_n} \rtimes \F_n$ are completely classified up to isomorphism by $n$ and by the subgroup of $\Rdual$ generated by the point spectrum of the modular operator $\Delta_\phi$.

In this paper, we are able to improve this classification and to also include Bernoulli crossed products for which the state $\phi$ is not almost periodic. In this setting, the Bernoulli crossed product $P^\Lambda \rtimes \Lambda$ is always of type $\III_1$ (see \cref{lem.only-type-III1,lem.outer-core} below), and does not admit any almost periodic weight (see \cref{rem.no-almost-periodic}). Our classification thus yields many nonisomorphic factors of type $\III_1$ without almost periodic weights. The first result that we obtain, is a nonisomorphism result for Bernoulli crossed products $(P,\phi)^\Lambda \rtimes \Lambda$ where the state $\phi$ is weakly mixing, i.e.~the modular operator $\Delta_\phi$ of the state $\phi$ has no nontrivial finitely dimensional invariant subspaces.
We denote by $\cC$ the class of groups defined in \cite[Section 4]{vaes-verraedt;classification-type-III-bernoulli-crossed-products}. 
All groups in the class $\cC$ are nonamenable, and the class $\cC$ contains all weakly amenable groups $\Gamma$ with $\beta_1^{(2)}(\Gamma) > 0$ \cite{popa-vaes;unique-cartan-decomposition-factors-free}, all weakly amenable, nonamenable, bi-exact groups \cite{popa-vaes;unique-cartan-decomposition-factors-hyperbolic} and all free product groups $\Gamma = \Gamma_1 \star \Gamma_2$ with $|\Gamma_1| \geq 2$ and $|\Gamma_2| \geq 3$ \cite{ioana;cartan-subalgebras-amalgamated-free-product-factors}. 
In particular, it contains all nonelementary hyperbolic groups, such as the free groups $\F_n, n\geq2$.
Moreover, class $\cC$ is closed under extensions and commensurability. 
\begin{thmstar}\label{corstar.B}
The set of factors
\begin{align*}
\bigl\{(P,\phi)^\Lambda \rtimes \Lambda \bigm| &\;\;\text{$P$ a nontrivial amenable factor with a normal faithful weakly}\\
&\;\;\text{mixing state $\phi$, and $\Lambda$ an icc group in the class $\cC$}\;\bigr\}
\end{align*}
is exactly classified, up to isomorphism, by the group $\Lambda$ and the action $\Lambda \actson (P,\phi)^\Lambda$ up to a state preserving conjugacy of the action.
\end{thmstar}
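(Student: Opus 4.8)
The plan is to prove both implications. The easy direction is pure functoriality: if $\delta \colon \Lambda_1 \to \Lambda_2$ is a group isomorphism and $\Delta \colon (P_1,\phi_1)^{\Lambda_1} \to (P_2,\phi_2)^{\Lambda_2}$ is a state-preserving conjugacy intertwining the two Bernoulli actions via $\delta$, then $\Delta$ and $\delta$ assemble into an isomorphism $(P_1,\phi_1)^{\Lambda_1} \rtimes \Lambda_1 \to (P_2,\phi_2)^{\Lambda_2} \rtimes \Lambda_2$ carrying one canonical state to the other. So the entire content lies in the converse: an arbitrary $*$-isomorphism $\pi \colon M_1 \to M_2$ between two such crossed products must transport the datum $(\Lambda_1, \Lambda_1 \actson (P_1,\phi_1)^{\Lambda_1})$ onto $(\Lambda_2, \Lambda_2 \actson (P_2,\phi_2)^{\Lambda_2})$.

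For the converse I would first move everything into the continuous core. By \cref{lem.only-type-III1} both $M_i$ are of type $\III_1$, so each has a canonical continuous core $\Mtil_i = c_{\phi_i}(M_i)$, a type $\II_\infty$ factor carrying the trace-scaling flow $(\theta^i_s)$, which is outer by \cref{lem.outer-core}. Since the Bernoulli state $\phi_i$ has modular flow equal to the diagonal shift $\bigovt_g \sigma^{\phi_i}$, fixing the group unitaries and commuting with the $\Lambda_i$-shift, the core factorizes as $\Mtil_i = \widetilde{B}_i \rtimes \Lambda_i$, where $\widetilde{B}_i = c_{\phi_i}(P_i^{\Lambda_i})$ and $\Lambda_i$ still acts by a generalized Bernoulli shift. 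The isomorphism $\pi$ then extends to an isomorphism $\Mtil_1 \to \Mtil_2$ respecting the dual traces and the trace-scaling flows up to the canonical identifications. The whole argument proceeds inside these type $\II_\infty$ factors with their $\R$-actions, which is precisely where the absence of an almost periodic weight forces us to work with the genuinely continuous, rather than discrete, core.

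The heart of the proof is a rigidity step. The Bernoulli action $\Lambda_i \actson P_i^{\Lambda_i}$ is s-malleable in Popa's sense, and its malleable deformation commutes with the modular flow, hence descends to a deformation of $\Mtil_i$ commuting with $(\theta^i_s)$. Combining this deformation with the spectral-gap and relative-amenability rigidity available for groups in the class $\cC$ (the same input underlying the unique Cartan decomposition results of Popa--Vaes and Ioana cited above, together with the amenability of $P_i$), I would show that $\pi(L(\Lambda_1))$ can be unitarily conjugated into $L(\Lambda_2)$ inside $\Mtil_2$, and dually that $\pi$ maps the base $\widetilde{B}_1$ onto $\widetilde{B}_2$ up to a unitary and an automorphism of the flow. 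This produces an isomorphism $\Lambda_1 \cong \Lambda_2$ together with a cocycle conjugacy of the two Bernoulli actions, compatible with the trace-scaling flows.

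The final and hardest step is to upgrade this cocycle conjugacy to an honest state-preserving conjugacy. Here I would invoke Popa's cocycle superrigidity for Bernoulli actions, but now in the type $\III$ setting: the cocycle takes values in the unitary group of the core and must be untwisted equivariantly for the flow $(\theta^i_s)$. The weak mixing hypothesis on $\phi$, namely that $\Delta_{\phi}$ has no nontrivial finite-dimensional invariant subspace, is exactly what is needed to rule out almost periodic contributions to the cocycle and to force it to be cohomologous to a genuine conjugacy of the base; this is where the argument departs most sharply from \cite{vaes-verraedt;classification-type-III-bernoulli-crossed-products}, which could exploit a discrete decomposition. I expect the main obstacle to be precisely the interaction of the malleable deformation and the intertwining-by-bimodules machinery with the continuous trace-scaling flow: all of the rigidity technology has to be run in a non-tracial, $\R$-equivariant framework, and weak mixing must be leveraged to close the gap left by the missing almost periodic structure.
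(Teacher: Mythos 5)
Your overall architecture (pass to the continuous core, apply the unique crossed product decomposition for groups in $\cC$, run a malleable deformation with spectral gap) matches the paper's, and the easy direction is fine. But your final step is where the argument breaks. You propose to finish by invoking Popa's cocycle superrigidity for Bernoulli actions ``in the type $\III$ setting''. Cocycle superrigidity for Bernoulli actions requires the acting group to have property (T) or to be a product of an infinite group with a nonamenable group; a general icc group in $\cC$ --- the free group $\F_2$ being the prototype --- satisfies neither, and its Bernoulli action is not cocycle superrigid. The paper never untwists a cocycle to prove \cref{corstar.B}. Instead, weak mixing of $\phi$ is used only to trivialize the centralizer: $P_{\phi,\ap}=\C$, hence $(P^\Lambda)_{\phi^\Lambda}=\C$ by \cref{lem.technical-condition}, so in the conclusion of \cref{thmstar.A} the reduction projections are trivial and the conjugating unitaries $u_g$ are scalars, whence $\Ad u_g = \id$ and ``cocycle conjugate up to reductions'' collapses to honest state preserving conjugacy. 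Your reading of weak mixing as ``ruling out almost periodic contributions to the cocycle'' misattributes its role. (Cocycle superrigidity does appear in the paper, but only for \cref{corstar.C}, where $\Lambda$ is assumed to be a product of two groups in $\cC$.)

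The second, more substantive gap is that after conjugating the cores $N_i = P_i^{I_i}\rtimes\R$ onto each other you still do not know where $\psi(L\R)$ sits relative to $L\R$ inside $N_1$, and without that you cannot descend from a cocycle conjugacy of the core actions to a statement about the original actions $\Lambda\actson (P_i,\phi_i)^{\Lambda_i}$ and their states. This is the genuinely new technical content of the paper: \cref{lem.pointwise-fixed-subalgebra-intertwines-in-R} runs the spectral gap/malleability argument against the abelian subalgebra $\psi(L\R)$, which is pointwise fixed by the twisted $\Lambda$-action, to obtain $\psi(L\R)\prec L\R$; then \cref{lem.pointwise-fixed-subalgebra-intertwines-in-R-conclusion,lem.state-preserving-actions} upgrade this to a partial isometry $w$ with $\psi(L\R)w = wL\R$ and disintegrate over $\Rdual$ via Takesaki duality to recover a state preserving conjugacy. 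Your sketch folds all of this into ``compatible with the trace-scaling flows'', but compatibility with the dual flow is automatic from the construction; the location of $L\R$ is not, and no appeal to cocycle superrigidity substitutes for it.
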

In particular, \cref{corstar.B} implies that the state $\phi^\Lambda$ on $P^\Lambda$ can be retrieved from the factor $(P,\phi)^\Lambda \rtimes \Lambda$.

More generally, we also obtain the following optimal classification result for general states $\phi$ on the base algebra.
For every nontrivial factor $(P, \phi)$ equipped with a normal faithful state, we denote by $P_{\phi,\ap} \subset P$ the \emph{almost periodic part} of $P$, i.e.~$P_{\phi,\ap}$ is the subalgebra spanned by the eigenvectors of $\Delta_\phi$,
\begin{align*}
P_{\phi,\ap} = \big(\lspan \bigcup_{\mu \in \Rdual} \{ x \in P \mid \sigma_t^\phi(x) = \mu^{\mathbf{i}t} x \}\big)''. 
\end{align*}
We obtain the following main theorem, classifying all Bernoulli crossed products with amenable factors $(P,\phi)$ as base algebra, under the assumption that the almost periodic part of the base algebra, $P_{\phi,\ap}$, is a factor. This assumption is equivalent to the assumption that the centralizer $(P^I)_{\phi^I}$ of the infinite tensor product $P^I$ w.r.t.~$\phi^I$ is a factor, see \cref{lem.technical-condition} below. 

\begin{thmstar}\label{thmstar.A}
Let $(P_0,\phi_0)$ and $(P_1,\phi_1)$ be nontrivial amenable factors equipped with normal faithful states, such that $(P_0)_{\phi_0,\ap}$ and $(P_1)_{\phi_1,\ap}$ are factors. Let $\Lambda_0$ and $\Lambda_1$ be icc groups in the class $\cC$. 

The algebras $P_0^{\Lambda_0} \rtimes \Lambda_0$ and $P_1^{\Lambda_1} \rtimes \Lambda_1$ are isomorphic if and only if one of the following statements holds.
\begin{enumerate}[\upshape (a),topsep=0mm]
\item The states $\phi_0$ and $\phi_1$ are both tracial, and the actions $\Lambda_i \curvearrowright (P_i, \phi_i)^{\Lambda_i}$ are cocycle conjugate, modulo a group isomorphism $\Lambda_0 \cong \Lambda_1$.
\item The states $\phi_0$ and $\phi_1$ are both nontracial, and there exist projections $p_i \in (P_i^{\Lambda_i})_{\phi_i^{\Lambda_i}}$ such that the reduced cocycle actions $(\Lambda_i \curvearrowright (P_i, \phi_i)^{\Lambda_i})^{p_i}$ are cocycle conjugate through a state preserving isomorphism, modulo a group isomorphism $\Lambda_0 \cong \Lambda_1$.
\end{enumerate}
\end{thmstar}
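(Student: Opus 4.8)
The plan is to prove the nontrivial ``only if'' direction; the ``if'' direction is routine functoriality of the crossed product, where in case~(b) one uses that reducing a cocycle action by a projection $p_i \in (P_i^{\Lambda_i})_{\phi_i^{\Lambda_i}}$ corresponds to passing to the corner $p_i(P_i^{\Lambda_i} \rtimes \Lambda_i)p_i$, together with the fact that type~$\III_1$ factors absorb amplifications and are unaffected by such reductions. So suppose $\pi\colon P_0^{\Lambda_0} \rtimes \Lambda_0 \to P_1^{\Lambda_1} \rtimes \Lambda_1$ is an isomorphism and write $M_i \defeq P_i^{\Lambda_i} \rtimes \Lambda_i$. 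First I would record the \emph{type dichotomy}: by \cref{lem.only-type-III1,lem.outer-core}, $M_i$ is of type $\II_1$ exactly when $\phi_i$ is tracial, and of type $\III_1$ otherwise. Since the type is an isomorphism invariant, $\pi$ forces $\phi_0$ and $\phi_1$ to be simultaneously tracial (leading to~(a)) or simultaneously nontracial (leading to~(b)).

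The core of the argument is a \emph{unique Bernoulli crossed product decomposition}, carried out in the continuous core. Because the Bernoulli shift preserves the product state, the modular flow $\sigma^{\phi_i^{\Lambda_i}}$ commutes with the $\Lambda_i$-action, so the continuous core splits as $\tilde M_i = \tilde A_i \rtimes \Lambda_i$, where $\tilde A_i \defeq P_i^{\Lambda_i} \rtimes_{\sigma^{\phi_i^{\Lambda_i}}} \R$ is an amenable semifinite algebra carrying the trace-scaling flow $\theta_i$, and $\Lambda_i \in \cC$ acts by a semifinite Bernoulli action. On $\tilde M_i$ we are in a trace-scaling, semifinite setting to which Popa's deformation/rigidity machinery applies. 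I would use the $s$-malleable deformation of the Bernoulli action together with the defining rigidity of the class $\cC$ (the spectral-gap and weak-amenability estimates of \cite{popa-vaes;unique-cartan-decomposition-factors-free,popa-vaes;unique-cartan-decomposition-factors-hyperbolic,ioana;cartan-subalgebras-amalgamated-free-product-factors}) to intertwine $\tilde A_0$ into $\tilde A_1$ via Popa's intertwining-by-bimodules, and conversely, concluding that $\tilde\pi(\tilde A_0)$ is unitarily conjugate to $\tilde A_1$. Comparing normalizers then yields a group isomorphism $\Lambda_0 \cong \Lambda_1$ and a cocycle conjugacy of the two semifinite Bernoulli actions, compatible with the flows.

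It remains to descend the core-level conclusion to $M_i$ and to track the modular data. The trace-scaling flows $\theta_i$ are matched by $\tilde\pi$ canonically, and Takesaki duality recovers each $M_i$, up to amplification, as $\tilde M_i \rtimes_{\theta_i} \R$. In the tracial case the flow is trivial, the amplification disappears, and the core-level cocycle conjugacy is literally the cocycle conjugacy of $\Lambda_i \actson (P_i,\phi_i)^{\Lambda_i}$ asserted in~(a). In the nontracial case one must convert the flow-equivariant core data into a \emph{state-preserving} statement about the actions on $P_i^{\Lambda_i}$ themselves; this conversion is possible only after cutting by projections $p_i$ in the centralizers $(P_i^{\Lambda_i})_{\phi_i^{\Lambda_i}}$, and the hypothesis that $P_{\phi,\ap}$ is a factor, equivalently by \cref{lem.technical-condition} that these centralizers are factors, is exactly what guarantees that suitable $p_i$ exist and that the reduced cocycle actions $(\Lambda_i \actson (P_i,\phi_i)^{\Lambda_i})^{p_i}$ become state-preservingly cocycle conjugate, yielding~(b).

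I expect the main obstacle to be precisely this passage through the continuous core rather than a discrete decomposition. Unlike the almost periodic case of \cite{vaes-verraedt;classification-type-III-bernoulli-crossed-products}, where $M$ splits as $N \rtimes \Gamma$ with $\Gamma$ discrete, here the trace-scaling flow is genuinely continuous, so both the rigidity step and the descent must be carried out equivariantly for a one-parameter flow, and the final untwisting of the associated real cocycle is delicate. Controlling this cocycle is where almost periodicity was previously used; its role must now be played by the weak mixing of the modular flow, equivalently the factoriality of the centralizer $(P_i^{\Lambda_i})_{\phi_i^{\Lambda_i}}$, which I would exploit both to rule out intermediate $\Lambda_i$-invariant subalgebras in the intertwining step and to force the cocycle to be a coboundary after cutting by $p_i$. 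Making this untwisting rigorous in the semifinite Bernoulli setting is the key new input.
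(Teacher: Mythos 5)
Your overall architecture is the same as the paper's: type dichotomy, passage to the continuous core $N_i = P_i^{\Lambda_i} \rtimes_{\sigma^{\vphi_i}} \R$, unique crossed product decomposition of $N_i \rtimes \Lambda_i$ via the class $\cC$ and \cite[Lemma 8.4]{ioana-peterson-popa;amalgamated-free-product} to get $\Lambda_0 \cong \Lambda_1$ and a cocycle conjugacy of the core actions $\Lambda_i \actson N_i$, then descent via Takesaki duality. However, there is a genuine gap at exactly the step you flag as ``the key new input'' and then do not supply. A cocycle conjugacy $\psi : N_0 \to N_1$ of the core actions gives no control whatsoever on where $\psi$ sends the distinguished subalgebra $L\R \subset N_0$, and without locating $\psi(L\R)$ relative to $L\R \subset N_1$ there is no way to descend to the original actions $\Lambda_i \actson P_i^{\Lambda_i}$. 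The paper's central technical contribution (\cref{lem.pointwise-fixed-subalgebra-intertwines-in-R,lem.pointwise-fixed-subalgebra-intertwines-in-R-conclusion}) fills precisely this hole: since the $\Lambda_i$-action on the core fixes $L\R$ pointwise, $\psi(L\R)$ is pointwise fixed by the twisted action $\Ad V_g \circ \alpha_g$, and Popa's spectral gap argument run against Ioana's malleable deformation of the Bernoulli action shows $\psi(L\R) \prec_{N_1} L\R$; a second pass, together with \cref{lemma:intertwining-abelian-subalg} and the factoriality of the centralizers, upgrades this to a partial isometry $w$ with $\psi(L\R)w = wL\R$, after which \cref{lem.state-preserving-actions} (Takesaki duality plus a direct integral decomposition over $\Rdual$) produces the state-preserving cocycle conjugacy up to reductions. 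Your description of this step as ``untwisting the associated real cocycle'' does not match what is actually needed and does not constitute an argument.

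Two further inaccuracies. First, you assert that factoriality of the centralizer $(P_i^{\Lambda_i})_{\phi_i^{\Lambda_i}}$ is ``equivalently'' weak mixing of the modular flow; it is not --- weak mixing corresponds to the centralizer of the almost periodic part being trivial, whereas the hypothesis here only requires it to be a factor (this weaker hypothesis is used in \cref{lem.pointwise-fixed-subalgebra-intertwines-in-R-conclusion} to identify $Q_i' \cap qN_iq = qL\R$ and in \cref{lem.state-preserving-actions} to compare central traces of projections). Second, your tracial/nontracial dichotomy silently lumps the periodic nontracial case (type $\III_\lambda$) into the continuous-core argument; but there $N_i$ is not a factor ($\cZ(N_i) = L(T\Z)$ by \cref{lem.only-type-III1}) and the spectral gap lemma is stated for nonperiodic states, so this case requires the separate discrete decomposition argument of Case 2 of \cref{thm.non-isomorphism-all-generalized-bernoulli}, which your proposal omits.
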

Remark that the actions $\Lambda_i \curvearrowright P_i^{\Lambda_i}$ are outer, and under the conditions of the theorem, the centralizer of $P_i^{\Lambda_i}$ w.r.t.~$\phi_i^{\Lambda_i}$ is a factor. Hence, the reduced cocycle action $(\Lambda_i \curvearrowright (P_i, \phi_i)^{\Lambda_i})^{p_i}$ makes sense, and is well defined up to cocycle conjugation, see \cref{section.cocycle-actions}. 

Note that if the centralizers of $P_i^{\Lambda_i}$ w.r.t.~$\phi_i^{\Lambda_i}$ are trivial, then we automatically get conjugation of the two actions. 
In particular, \Cref{corstar.B} is now a direct consequence of \cref{thmstar.A}, since if $\phi$ is a weakly mixing state on $P$, then $P_{\phi,\ap} = \C$ is the trivial factor, and hence the centraliser of $P_i^{\Lambda_i}$ w.r.t.~$\phi_i^{\Lambda_i}$ is also trivial.
More generally, if the group $\Lambda_i$ is a direct product of two icc groups in the class $\cC$, we can apply Popa's cocycle superrigidity theorems and also get conjugation. We obtain the following result, analogous to Theorem B in \cite{vaes-verraedt;classification-type-III-bernoulli-crossed-products}.

\begin{thmstar}\label{corstar.C}
The set of factors
\begin{align*}
\bigl\{(P,\phi)^\Lambda \rtimes \Lambda \bigm| &\;\;\text{$P$ a nontrivial amenable factor with normal faithful state $\phi$}\\
&\;\;\text{such that $P_{\phi,\ap}$ is a factor, and $\Lambda$ a direct product of two icc}\\
&\;\;\text{groups in the class $\cC$}\;\bigr\}
\end{align*}
is exactly classified, up to isomorphism, by the group $\Lambda$ and the action $\Lambda \actson (P,\phi)^\Lambda$ up to a state preserving conjugacy of the action.
\end{thmstar}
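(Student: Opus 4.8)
The plan is to derive Theorem C from Theorem A by showing that the extra assumption on $\Lambda$ (a direct product of two icc groups in the class $\cC$) forces the cocycle conjugacy in case (b) of Theorem A to upgrade to an honest conjugacy of the actions. Concretely, Theorem A already tells us that $P_0^{\Lambda_0}\rtimes\Lambda_0 \cong P_1^{\Lambda_1}\rtimes\Lambda_1$ is equivalent to a cocycle conjugacy of the reduced cocycle actions (together with a group isomorphism $\Lambda_0\cong\Lambda_1$). So the heart of Theorem C is the implication: for $\Lambda$ a direct product of two icc groups in $\cC$, any $1$-cocycle for the Bernoulli action $\Lambda\actson (P,\phi)^\Lambda$ is cohomologous to a group homomorphism $\Lambda\to \cU((P^\Lambda)_{\phi^\Lambda})$, after which the cocycle can be absorbed and the cocycle conjugacy becomes a genuine state-preserving conjugacy.

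First I would invoke Popa's cocycle superrigidity theorem. A direct product $\Lambda=\Lambda_1\times\Lambda_2$ of two infinite groups has the property that the Bernoulli action enjoys the relevant spectral gap / malleability, so that Popa's cocycle superrigidity applies and every cocycle valued in a \emph{Polish group of finite type} (in particular, valued in the unitary group of the finite von Neumann algebra $(P^\Lambda)_{\phi^\Lambda}$) is cohomologous to a homomorphism. Here the factoriality assumption on $P_{\phi,\ap}$, equivalently on the centralizer $(P^\Lambda)_{\phi^\Lambda}$ (via \cref{lem.technical-condition}), is what guarantees that the target of the cocycle is a finite factor, so that superrigidity is available in the state-preserving setting. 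In the tracial case this is the classical statement; in the nontracial case one works with the reduced cocycle action on $p(P^\Lambda)_{\phi^\Lambda}p$ and applies the superrigidity in the centralizer, which is a $\II_1$ factor.

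Next I would show that a cocycle conjugacy whose cocycle is a homomorphism into the centralizer can be rewritten as a conjugacy. Since $\Lambda$ is icc and the homomorphism lands in the finite factor $(P^\Lambda)_{\phi^\Lambda}$, one argues — as in the proof of Theorem B of \cite{vaes-verraedt;classification-type-III-bernoulli-crossed-products} — that the homomorphism part can be untwisted: the cocycle conjugating isomorphism, composed with an inner perturbation coming from the homomorphism, yields a state-preserving isomorphism intertwining the two Bernoulli actions genuinely, i.e.\ a conjugacy modulo $\Lambda_0\cong\Lambda_1$. Conversely, a state preserving conjugacy of the actions is trivially a cocycle conjugacy and gives an isomorphism of the crossed products, so no content is lost in the other direction. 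Putting these together upgrades the equivalences of \cref{thmstar.A} to the cleaner statement that the isomorphism class of $(P,\phi)^\Lambda\rtimes\Lambda$ remembers exactly the pair $(\Lambda, \Lambda\actson (P,\phi)^\Lambda)$ up to state-preserving conjugacy.

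The main obstacle I expect is verifying the hypotheses of Popa's cocycle superrigidity theorem in this genuinely type $\III$ setting and, in particular, reducing the cocycle problem to one inside the finite factor $(P^\Lambda)_{\phi^\Lambda}$. The subtlety is that the Bernoulli action is not trace preserving, so one cannot directly quote the $\II_1$ superrigidity theorem; instead one must pass to the reduced cocycle action (using a projection $p$ in the centralizer, as in case (b) of \cref{thmstar.A}) and check that this reduced action still has the spectral gap and malleable deformation properties required. The factoriality of $P_{\phi,\ap}$ is precisely what makes the centralizer a factor and lets the reduction go through cleanly; once the cocycle is seen to take values in a genuine $\II_1$ factor and the deformation estimates survive the reduction, the superrigidity and the subsequent untwisting are essentially the same as in the almost periodic case treated in \cite{vaes-verraedt;classification-type-III-bernoulli-crossed-products}.
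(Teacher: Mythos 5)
Your overall strategy --- deduce \cref{corstar.C} from \cref{thmstar.A}, then apply Popa-type cocycle superrigidity for the Bernoulli action of the product group inside the centralizer $(P^\Lambda)_{\phi^\Lambda}$ (equivalently, inside the almost periodic part, via \cref{lem.technical-condition}) to untwist the cocycle --- is exactly the paper's. But there is one point where your plan, as written, does not reach the statement. In the nontracial case, \cref{thmstar.A} only provides a state preserving cocycle conjugacy of the \emph{reduced} cocycle actions by projections $p_i$ in the centralizers, and you propose to apply superrigidity to the reduced cocycle action on $p(P^\Lambda)_{\phi^\Lambda}p$. That would at best show that the \emph{reduced} actions are conjugate, whereas \cref{corstar.C} classifies by the full action $\Lambda \actson (P,\phi)^\Lambda$; a reduction by a projection of trace $<1$ a priori changes the action, so this step does not close the argument.

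The paper resolves this by amplifying so that $p_0=1$, interpreting the remaining data as a \emph{generalized} 1-cocycle $(v_g)_{g\in\Lambda}$ with support projection $p_1$ for $\Lambda \actson (P_1)_{\phi_1,\ap}^{\Lambda_1}$, and invoking the generalized-cocycle superrigidity result \cite[Corollary 7.3]{vaes-verraedt;classification-type-III-bernoulli-crossed-products}. Its conclusion $v_g = \chi(g)\, v^\star \alpha_g^1(v)$, with $\chi : \Lambda \to \T$ a character and $v \in M_{\vphi_1,\lambda}$ satisfying $v^\star v = p_1$ and $vv^\star = 1$, produces an \emph{eigenvector of the modular operator} implementing an equivalence between $p_1$ and $1$. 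This is essential: inside the centralizer itself $p_1$ cannot be equivalent to $1$ for trace reasons, so a cocycle "cohomologous to a homomorphism" in the unitary group of the finite factor, as in your formulation, cannot undo the reduction. It is the eigenvector $v$ that simultaneously trivializes the cocycle and removes the reduction, making $\Ad v \circ \psi$ a state preserving conjugacy of the full actions. Your write-up needs this generalized-cocycle version of superrigidity (together with the weak mixing argument showing the residual homomorphism is a scalar character); the ordinary version applied to the reduced action is not enough.
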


For \emph{two-sided} Bernoulli actions $\Lambda \times \Lambda \curvearrowright P^\Lambda$, we get the following improvement of Theorem C in \cite{vaes-verraedt;classification-type-III-bernoulli-crossed-products}. Note that even in the almost periodic case, we obtain a strengthening, as we can remove the condition that $\Lambda$ has no nontrivial \emph{central sequences}. A central sequence in a group $\Lambda$ is a sequence $g_n \in \Lambda$ such that for all $h \in \Lambda$, $g_n h = h g_n$ eventually. The absence of such sequences ensures that the crossed product $M = P^\Lambda \rtimes (\Lambda \times \Lambda)$ is a full factor, i.e.~that $\Inn(M) \subset \Aut(M)$ is closed for the topology given by $\alpha_n \to \alpha$ if and only if $\forall \psi \in M_\star : \|\psi \circ \alpha_n -\psi \circ \alpha\| \to 0$ (see \cite{connes;almost-periodic-states}). 
Since we do not require that $\Lambda$ has no nontrivial central sequences, the following result possibly yields a class of nonisomorphic type $\III_1$ factors that are not full. However, there are probably no groups with central sequences in class $\cC$.
\begin{thmstar}\label{corstar.D}
The set of factors
\begin{align*}
\bigl\{(P,\phi)^\Lambda \rtimes (\Lambda \times \Lambda) \bigm| &\;\;\text{$P$ a nontrivial amenable factor with normal faithful state $\phi$}\\
&\;\;\text{such that $P_{\phi,\ap}$ is a factor, and $\Lambda$ an icc group in the class $\cC$}\;\bigr\}
\end{align*}
is exactly classified, up to isomorphism, by the group $\Lambda$ and the pair $(P,\phi)$ up to a state preserving isomorphism.
\end{thmstar}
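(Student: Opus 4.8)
The sufficiency of the stated condition is immediate: a state preserving isomorphism $(P_0,\phi_0)\cong(P_1,\phi_1)$ together with a group isomorphism $\Lambda_0\cong\Lambda_1$ induces, by applying the base isomorphism at every tensor position and relabelling the index set, a state preserving conjugacy of the two-sided actions $\Lambda_i\times\Lambda_i\actson(P_i,\phi_i)^{\Lambda_i}$, hence an isomorphism of the crossed products. For the converse, the plan is first to upgrade an abstract isomorphism $P_0^{\Lambda_0}\rtimes(\Lambda_0\times\Lambda_0)\cong P_1^{\Lambda_1}\rtimes(\Lambda_1\times\Lambda_1)$ to a genuine \emph{conjugacy} of the two-sided actions, and then to recover the base pair $(P,\phi)$ intrinsically from the two-sided dynamical system as the fixed-point algebra of a point stabiliser.

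\emph{Step 1: reduction to conjugacy of the two-sided actions.} The acting group $G_i\defeq\Lambda_i\times\Lambda_i$ is an icc group in the class $\cC$, since $\cC$ is closed under extensions and a direct product of icc groups is icc. Restricted to either factor $\Lambda_i\times\{e\}$ or $\{e\}\times\Lambda_i$, the two-sided action is an honest free, mixing Bernoulli shift, so the full action is a weakly mixing, malleable generalized Bernoulli action of $G_i$. The deformation/rigidity methods used to prove \cref{thmstar.A} therefore apply equally to locate $\rL(G_i)$ inside the crossed product and to conclude that the isomorphism arises from a cocycle conjugacy of the actions $G_i\actson(P_i,\phi_i)^{\Lambda_i}$, modulo a group isomorphism $\delta\colon G_0\to G_1$. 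Because $G_i$ is a \emph{direct product} of two icc groups in $\cC$, Popa's cocycle superrigidity applies exactly as in \cref{corstar.C}; combined with the standing hypothesis that $(P_i)_{\phi_i,\ap}$ is a factor, equivalently that the centralizer $(P_i^{\Lambda_i})_{\phi_i^{\Lambda_i}}$ is a factor, this removes the cocycle and the projection reduction appearing in \cref{thmstar.A}\,(b), and yields a state preserving isomorphism $\pi\colon(P_0,\phi_0)^{\Lambda_0}\to(P_1,\phi_1)^{\Lambda_1}$ with $\pi\circ\alpha^0_g=\alpha^1_{\delta(g)}\circ\pi$ for all $g\in G_0$.

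\emph{Step 2: recovering the base pair.} For the two-sided action, $G_i$ acts transitively on the index set $\Lambda_i$ with $\Stab_{G_i}(k)=(k,e)\Delta_i(k,e)^{-1}$, where $\Delta_i=\{(g,g)\mid g\in\Lambda_i\}$ is the diagonal. Since $\Lambda_i$ is icc, the diagonal acts on $\Lambda_i\setminus\{e\}$ by conjugation with only infinite orbits, so that part of the action is weakly mixing and the fixed-point algebra of a point stabiliser is exactly the base algebra sitting at that point,
\begin{align*}
(P_i^{\Lambda_i})^{\Stab_{G_i}(k)}=(P_i)_k\cong(P_i,\phi_i).
\end{align*}
By the Krull--Remak--Schmidt theorem applied to the centreless group $G_i$, the isomorphism $\delta$ either preserves or interchanges the two direct factors; composing $\delta$ with the flip automorphism of $G_1$ when necessary, which is implemented on $P_1^{\Lambda_1}$ by the position-inverting isomorphism $k\mapsto k^{-1}$ and fixes the base at $e$, we may assume $\delta=\delta_1\times\delta_2$ for isomorphisms $\delta_j\colon\Lambda_0\to\Lambda_1$. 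Then $\delta(\Delta_0)$ is the graph of $\theta\defeq\delta_2\circ\delta_1^{-1}\in\Aut(\Lambda_1)$, and a direct computation shows that this graph has a (necessarily unique) fixed point on $\Lambda_1$ precisely when $\theta$ is inner, say $\theta=\Ad(k_0^{-1})$, in which case $\delta(\Delta_0)=\Stab_{G_1}(k_0)$; when $\theta$ is outer the graph has no fixed point, and, as explained below, acts with only infinite orbits, so that $(P_1^{\Lambda_1})^{\delta(\Delta_0)}=\C$. Since $\pi$ carries the nontrivial factor $(P_0)_e\cong P_0$ onto $(P_1^{\Lambda_1})^{\delta(\Delta_0)}$, the outer case is excluded, $\theta$ is inner, and $\pi$ restricts to a state preserving isomorphism $(P_0,\phi_0)\cong(P_1)_{k_0}\cong(P_1,\phi_1)$, while $\delta_1$ gives $\Lambda_0\cong\Lambda_1$.

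The main obstacle is Step 1: showing that an abstract isomorphism of the two crossed products descends to a genuine conjugacy of the two-sided actions. This is the heavy deformation/rigidity input, and it is exactly here that both the membership of $\Lambda_i$ in $\cC$ and the product structure of $G_i=\Lambda_i\times\Lambda_i$ enter -- the former to transport rigidity through the malleable deformation and locate $\rL(G_i)$, the latter to invoke Popa's cocycle superrigidity and upgrade cocycle conjugacy to honest conjugacy. A more delicate secondary point is the dichotomy used in Step 2: one must know that the twisted diagonal $\delta(\Delta_0)$ acts on $\Lambda_1$ with only infinite orbits whenever $\theta$ is outer, so that the fixed-point algebra collapses to $\C$. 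This amounts to the group-theoretic fact that for an icc group in $\cC$ no nontrivial automorphism fixes a finite-index subgroup; granting it, the factoriality of $P_0$ forces $\theta$ to be inner and thereby pins down the base pair $(P,\phi)$.
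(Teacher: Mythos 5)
Your Step 1 follows exactly the paper's route: one checks that the two-sided action $\Lambda\times\Lambda\actson\Lambda$ satisfies the hypotheses of \cref{thm.non-isomorphism-all-generalized-bernoulli} (no invariant mean because $\Lambda$ is nonamenable, and every nontrivial $(g,h)$ moves infinitely many points because $\Lambda$ is icc), and the resulting generalized $1$-cocycle and reducing projection are removed, exactly as in the proof of \cref{corstar.C}, using the weak mixingness of $\Lambda\times\Lambda\actson P^{\Lambda}$ and the vanishing theorem for generalized $1$-cocycles invoked there. Up to the conjugacy $\psi\circ\alpha^0_g=\alpha^1_{\delta(g)}\circ\psi$ modulo $\delta:\Lambda_0\times\Lambda_0\to\Lambda_1\times\Lambda_1$, you and the paper coincide.

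The gap is in Step 2, which the paper delegates to the argument of Popa--Vaes (proof of their Theorem 5.4 on generalized Bernoulli actions). Your reduction of $\delta$ to a product $\delta_1\times\delta_2$ (possibly after composing with the flip) via Krull--Remak--Schmidt is not justified: uniqueness of direct decompositions of a centerless group requires the factors to be directly \emph{indecomposable}, and nothing forces $\Lambda$ to be so. On the contrary, class $\cC$ is closed under extensions, so for instance $\Lambda=\F_2\times\F_2$ is an admissible icc group in $\cC$; then $\Lambda\times\Lambda$ has four indecomposable factors and $\delta$ may permute them in a way that neither preserves nor interchanges the two copies of $\Lambda$. In that case $\delta(\Delta_0)$ need not be the graph of an automorphism of $\Lambda_1$, and your dichotomy (inner $\theta$ versus only infinite orbits) does not cover it. The remaining ingredients of your Step 2 are correct: the identification $(P_i^{\Lambda_i})^{\Stab(k)}=(P_i)_k$ via weak mixingness of the conjugation action on $\Lambda_i\setminus\{k\}$, and the fact that in an icc group an automorphism agreeing with an inner one on a finite-index subgroup is itself inner (which is what makes an outer twisted diagonal act with only infinite orbits). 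To close the gap one should argue as Popa--Vaes do, without decomposing $\delta$: for every $k\in\Lambda_0$ the subgroup $\delta(\Stab_{G_0}(k))$ has nontrivial fixed-point algebra $\psi((P_0)_k)$, hence admits a finite orbit on $\Lambda_1$; combining this over all $k$ with the fact that the commuting subalgebras $\psi((P_0)_k)$ generate $P_1^{\Lambda_1}$ forces each $\psi((P_0)_k)$ onto a single tensor factor $(P_1)_{\beta(k)}$ for a bijection $\beta:\Lambda_0\to\Lambda_1$, which yields both $\Lambda_0\cong\Lambda_1$ and the state preserving isomorphism $(P_0,\phi_0)\cong(P_1,\phi_1)$.
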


The proofs of the above nonisomorphism results all make use of Tomita-Takesaki's modular theory and the continuous core of type $\III$ factors. We first use the main results of \cite{popa-vaes;unique-cartan-decomposition-factors-free,popa-vaes;unique-cartan-decomposition-factors-hyperbolic,ioana;cartan-subalgebras-amalgamated-free-product-factors} providing unique crossed product decomposition theorems for factors of the form $R \rtimes \Lambda$, where $\Lambda \actson R$ is an outer action on the hyperfinite $\II_1$ factor $R$ of a group $\Lambda$ in class $\cC$. We then obtain a cocycle conjugation isomorphism $\psi : P_0^\Lambda \rtimes \R \to P_1^\Lambda \rtimes \R$ between the induced actions $\Lambda \curvearrowright P_i^\Lambda \rtimes \R$ on the continuous cores. Using the spectral gap methods of \cite{popa;superrigidity-malleable-actions-spectral-gap}, we obtain mutual intertwining $\psi(L\R) \prec L\R$, $L\R \prec \psi(L\R)$, see \cref{lem.pointwise-fixed-subalgebra-intertwines-in-R} below. Under the assumption that the almost periodic parts $(P_i)_{\phi_i,\ap}$ are factors, we are able to deduce that the actions $\Lambda \curvearrowright P_i^\Lambda$ must then be cocycle conjugate, up to reductions, see \cref{lem.pointwise-fixed-subalgebra-intertwines-in-R-conclusion,lem.state-preserving-actions}.

Recently, Houdayer and Isono \cite{houdayer-isono;unique-prime-factorization} introduced intertwining techniques in the general type $\III$ setting. While it is tempting to use these interesting techniques to prove the nonisomorphism results in the current article, there are two obstructions to do so. Firstly, the main intertwining theorems for group actions on type $\II_1$ factors \cite{popa-vaes;unique-cartan-decomposition-factors-free,popa-vaes;unique-cartan-decomposition-factors-hyperbolic,ioana;cartan-subalgebras-amalgamated-free-product-factors} do currently not have general analogues in the type $\III$ setting. Secondly, it is unclear whether a direct type $\III$ approach would allow to recover a state preserving conjugacy, as in the above theorems. After all, in our situation, we do have `favorite' states, in contrast to the unique prime factorization problem studied in \cite{houdayer-isono;unique-prime-factorization}.

\section{Preliminaries}

All mentioned von Neumann algebras are assumed to have separable predual.

\subsection{Cocycle actions}\label{section.cocycle-actions}
Let $M$ be a von Neumann algebra equipped with a normal semifinite faithful (n.s.f.) weight $\varphi$. 
We denote by $\Aut(M)$ the group of automorphisms of $M$, and by $\Aut(M, \vphi)$ the subgroup of weight scaling automorphisms of $M$, i.e.~automorphisms $\alpha$ for which there exists a $\lambda \in \R^+_0$ such that $\vphi \circ \alpha = \lambda \vphi$. Equipped with the topology where a net $\alpha_n \in \Aut(M)$ converges to $\alpha$ if and only if for every $\psi \in M_\star$, $\| \psi \circ \alpha_n - \psi \circ \alpha\|$ converges to zero, $\Aut(M)$ and $\Aut(M,\vphi)$ become Polish groups. 

Let $G$ now be a locally compact group. A \emph{cocycle action} of $G$ on $(M,\varphi)$ is a continuous map
$ \alpha : G \to \Aut(M,\varphi) : g \mapsto \alpha_g$ and a continuous map $v : G \times G \to \mathcal{U}(M_\varphi)$ such that:
\begin{align*}
\alpha_e = \text{id}, \qquad \alpha_{g} \circ \alpha_{h} &= \Ad v_{g,h} \circ \alpha_{gh}, \qquad\forall g,h,k\in G, \\
v_{g,h}v_{gh,k} &= \alpha_g (v_{h,k}) v_{g,hk}.
\end{align*}
Here, $M_\vphi$ denotes the \emph{centraliser} of $\varphi$.
Such an action is denoted by $G \curvearrowright^{\alpha,v} (M,\varphi)$. A strongly continuous map $v$ satisfying the above relations is called a 2-\emph{cocycle} for $\alpha$. If $v=1$, $\alpha$ is called an \emph{action}.

Two cocycle actions $G \curvearrowright^{\alpha,v} (M_1,\vphi_1)$ and $G \curvearrowright^{\beta,w} (M_2,\vphi_2)$ are \emph{cocycle conjugate through a weight preserving isomorphism} if there exists a strongly continuous map $u : G \to \mathcal{U}( (M_2)_{\vphi_2})$ and an isomorphism $\psi : (M_1,\vphi_1) \to (M_2,\vphi_2)$ satisfying $\vphi_2 \circ \psi = \vphi_1$ and 
\begin{align*}
\psi \circ \alpha_g &= \Ad u_g \circ \beta_g \circ \psi,\qquad\forall g,h \in G, \\
\psi(v_{g,h})&=u_g \beta_g(u_h) w_{g,h} u^\star_{gh}.
\end{align*}
If $\delta : G_1 \to G_2$ is a continuous isomorphism, we say that the cocycle actions $G_1 \curvearrowright^{\alpha,v} (M_1,\varphi_1)$ and $G_2 \curvearrowright^{\beta,w} (M_2,\varphi_2)$ are cocycle conjugate \emph{modulo $\delta$}, if the actions $\alpha$ and $\beta \circ \delta$ of $G_1$ are cocycle conjugate.

An automorphism $\alpha$ of $M$ is called \emph{properly outer} if there exists no nonzero element $y \in M$ such that $y \alpha(x)=xy$ for all $x \in M$. A cocycle action $\Gamma \curvearrowright^{\alpha,v} (M,\varphi)$ of a discrete group is called \emph{properly outer} if $\alpha_g$ is properly outer for all $g \in \Gamma, g\not=e$.

Assume now that $(M,\varphi)$ is a von Neumann algebra with an n.s.f.~weight for which $M_\varphi$ is a factor. Consider a properly outer cocycle action $\Gamma \curvearrowright^{\alpha,v}(M,\varphi)$ of a discrete group $\Gamma$, that preserves the weight $\varphi$.
Suppose that $p \in M_\varphi$ is a nonzero projection with $\varphi(p) < \infty$, and choose partial isometries $w_g \in M_\varphi$ such that $p = w_gw_g^\star$, $\alpha_g(p) = w_g^\star w_g$, $w_e=p$.
Let $\alpha^p : G \to \Aut(pM p, \varphi_p)$ be defined by
$ \alpha_g^p(pxp) = w_g\alpha_g(pxp)w_g^\star$, for $x \in M$, where $\varphi_p(pxp) = \varphi(pxp)/\varphi(p)$. Denote $v_{g,h}^p = w_g \alpha_g(w_h) v_{g,h} w_{gh}^\star \in pM_\varphi p$, for $g,h\in G$. Then $G \curvearrowright^{\alpha^p,v^p}(pM p,\varphi_p)$ is a properly outer cocycle action, which does not depend on the choice of the partial isometries $w_g \in M_\varphi$, up to state preserving cocycle conjugacy. We call this action the \emph{reduced cocycle action of $\alpha$ by $p$}.

We say that two properly outer cocycle actions $\Gamma \curvearrowright^{\alpha,v} (M_1,\vphi_1)$ and $\Gamma \curvearrowright^{\beta,w} (M_2,\vphi_2)$ of a discrete group $\Gamma$ are, \emph{up to reductions}, cocycle conjugate through a state preserving isomorphism, if there exists projections $p_i \in (M_i)_{\vphi_i}$ such that the reduced cocycle actions $\alpha^{p_0}$ and $\beta^{p_1}$ are cocycle conjugate through a state preserving isomorphism.

\subsection{Popa's theory of intertwining-by-bimodules}

Let $(M,\tau)$ be any tracial von Neumann algebra, and let $P \subset 1_P M 1_P$ and $Q \subset 1_Q M 1_Q$ be von Neumann subalgebras. Following \cite{popa;strong-rigidity-malleable-actions-I}, we write $P \prec_M Q$ if there exist a projection $p \in M_n(\C) \ot Q$, a normal unital $\star$-homomorphism $\theta : P \to p(M_n(\C) \ot Q)p$ and a non-zero partial isometry $v \in M_{1,n}(\C) \ot 1_P M 1_Q$ satisfying $a v = v \theta(a)$ for all $a \in P$.

We recall from \cite{vaes-verraedt;classification-type-III-bernoulli-crossed-products} the definition of the class of groups $\cC$.

Let $(M,\tau)$ be a tracial von Neumann algebra. 
For every von Neumann subalgebra $P\subset M$, we consider the \emph{normaliser} $\mathcal{N}_M(P) = \{u \in \mathcal{U}(M) \mid uPu^\star = P\}$. 
The \emph{Jones index} of a von Neumann subalgebra $P \subset M$ is defined as the $P$-dimension of $L^2(M)$ as a $P$-module, computed using the given trace $\tau$.
An inclusion $P \subset M$ is said to be \emph{essentially of finite index} if there exist projections $p \in P' \cap M$ that lie arbitrarily close to $1$ such that $P p \subset pMp$ has finite Jones index.
We call $A \subset M$ a \emph{virtual core subalgebra} if $A' \cap M  = \cZ(A)$ and if the inclusion $\cN_M(A)\dpr \subset M$ is essentially of finite index.

\begin{definition}[\cite{vaes-verraedt;classification-type-III-bernoulli-crossed-products}]\label{definition:class-C}
We say that a countably infinite group $\Gamma$ belongs to class $\cC$ if for every trace preserving cocycle action $\Gamma \actson (B,\tau)$ and every amenable, virtual core subalgebra $A \subset p (B \rtimes \Gamma)p$, we have that $A \prec B$.
\end{definition}

Recall that all groups in the class $\cC$ are nonamenable, and that class $\cC$ contains all weakly amenable groups $\Gamma$ with $\beta_1^{(2)}(\Gamma) > 0$ \cite{popa-vaes;unique-cartan-decomposition-factors-free}, all weakly amenable, nonamenable, bi-exact groups \cite{popa-vaes;unique-cartan-decomposition-factors-hyperbolic} and all free product groups $\Gamma = \Gamma_1 \star \Gamma_2$ with $|\Gamma_1| \geq 2$ and $|\Gamma_2| \geq 3$ \cite{ioana;cartan-subalgebras-amalgamated-free-product-factors}. Moreover, class $\cC$ is closed under extensions and commensurability. 
We refer to section 4 of \cite{vaes-verraedt;classification-type-III-bernoulli-crossed-products} for more details.

We will also need the following elementary lemma.

\begin{lemma}
Let $N$ be a finite von Neumann algebra. If $A,B \subset N$ are abelian subalgebras with
$A = (A' \cap N)' \cap N$, $B = (B' \cap N)' \cap N$ and $A \prec_N B$, then there exists a nonzero partial isometry $w \in N$ with $ww^\star \in A' \cap N$, $w^\star w \in B' \cap N$ and $A w \subset w B$. 
\label{lemma:intertwining-abelian-subalg}
\end{lemma}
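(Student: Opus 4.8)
The plan is to upgrade the abstract relation $A \prec_N B$ to a concrete partial isometry, and then to use the two hypotheses $A = (A'\cap N)'\cap N$ and $B = (B'\cap N)'\cap N$ to force its left and right support projections into $A'\cap N$ and $B'\cap N$ respectively.

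First I would unpack $A\prec_N B$ in its standard reduced form (the usual equivalent formulations of intertwining-by-bimodules from \cite{popa;strong-rigidity-malleable-actions-I}): there exist nonzero projections $p\in A$ and $q\in B$, a normal $*$-homomorphism $\psi : pAp \to qBq$, and a nonzero partial isometry $v \in pNq$ with $av = v\psi(a)$ for all $a \in pAp$. Extending $\psi$ to a normal $*$-homomorphism $\beta : A \to B$ by $\beta(a) = \psi(pap)$, and using that $p\in A$ is central in $A$ together with $v=pv=vq$, one checks that $av = v\beta(a)$ for every $a \in A$. Two of the three conclusions are then immediate. Taking adjoints gives $v^\star a = \beta(a)v^\star$, so that $a\,vv^\star = v\beta(a)v^\star = vv^\star a$, i.e.\ $vv^\star \in A'\cap N$; and $Av \subseteq vB$ holds by construction of $\beta$.

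The remaining and main point is to replace $v$ by a partial isometry $w$ whose right support lies in $B'\cap N$, and this is exactly where the hypothesis $B = (B'\cap N)'\cap N$ enters. Write $Q = B'\cap N$; the hypothesis says precisely that $\cZ(Q) = B$, so $\beta(A)\subseteq B$ is central in $Q$. The key observation is that for any $u \in Q$ the element $vu^\star$ still intertwines, because $\beta(A)$ commutes with $u$: indeed $a\,vu^\star = v\beta(a)u^\star = vu^\star\beta(a)$. I would therefore look for a nonzero partial isometry $u \in Q$ with $u^\star u \leq v^\star v$. For such a $u$ the element $w = vu^\star$ is again a partial isometry, with $w^\star w = u(v^\star v)u^\star = uu^\star \in Q = B'\cap N$, while $ww^\star = vu^\star u v^\star \leq vv^\star$ again satisfies $a\,ww^\star = ww^\star a$ by the same adjoint computation (using $\beta(A)$ central in $Q$), so $ww^\star\in A'\cap N$; and $Aw \subseteq wB$ is preserved since $aw = w\beta(a)$ with $\beta(a)\in B$. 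To produce such a $u$ and to guarantee $w\neq 0$, I would run a maximality argument inside the finite algebra $Q$: take a maximal family of partial isometries $u_j\in Q$ with $u_j^\star u_j\leq v^\star v$ and pairwise orthogonal ranges, and identify $\sum_j u_j u_j^\star$ with the central support in $Q$ of $v^\star v$; the central-valued trace then matches the total mass of the pieces with that of their left images, so that the pieces $v u_j^\star$, after being spread out by partial isometries of $A'\cap N$, assemble into a single partial isometry $w$ without overflowing.

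The step I expect to be the genuine obstacle is this last one: arranging the right support inside $B'\cap N$ without collapsing $v$ to zero, i.e.\ showing that the central support in $Q$ of $v^\star v$ is actually reached by partial isometries of $Q$ lying below $v^\star v$. This is where one must exploit that $v^\star v$ lies in the relative commutant of the central subalgebra $\beta(A)\subseteq\cZ(Q)$ together with finiteness of $N$, which is precisely the content hidden in $A\prec_N B$. The symmetric hypothesis $A = (A'\cap N)'\cap N$, which says $\cZ(A'\cap N) = A$, plays the analogous role on the left, providing enough room in $A'\cap N$ to keep the left supports of the assembled pieces orthogonal; combining the two symmetric halves yields the desired $w$.
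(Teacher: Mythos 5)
You correctly isolate the crux of the lemma --- pushing the right support of the intertwiner into $B'\cap N$ --- but the mechanism you propose for doing so breaks down at exactly that point. You need a nonzero partial isometry $u\in Q=B'\cap N$ with $u^\star u\le v^\star v$, i.e.\ a nonzero projection of $Q$ lying below $v^\star v$. Such a projection need not exist: the relation $av=v\beta(a)$ only places $v^\star v$ in $\beta(pAp)'\cap qNq$, which is much larger than $Q$, and $v^\star v$ can be entirely transverse to $Q$. For instance, in $N=M_2(\C)\ovt R$ with $B=(\C\oplus\C)\ot\C1$ the diagonal of the matrix factor (so $Q=(\C\oplus\C)\ovt R$ and $B=Q'\cap N$, as required), the projection $\tfrac12\smatr{1}{1}{1}{1}\ot 1$ dominates no nonzero projection of $Q$ at all. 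Consequently your maximal family of $u_j$'s can be empty, and the quantity you want to compare it with --- ``the central support in $Q$ of $v^\star v$'' --- is not defined, since $v^\star v\notin Q$. More structurally, to relocate $v^\star v$ into $Q$ one must conjugate it by a partial isometry inside $D_0=\beta(pAp)'\cap qNq$, where it actually lives, and for that one needs $v^\star v$ to be an \emph{abelian} projection of $D_0$; in your setup it is not, since $v^\star v\,D_0\,v^\star v$ is spatially isomorphic to a corner of $(pAp)'\cap pNp$, which is in general nonabelian. (A secondary gap: the ``reduced form'' of $A\prec_N B$ with a homomorphism landing in a corner $qBq$ of $B$ itself, with no matrix amplification, also requires justification.)

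The paper's proof avoids this obstacle by first dualizing: $A\prec_N B$ implies $Q=B'\cap N\prec_N P=A'\cap N$ by \cite[Lemma 3.5]{vaes;explicit-computations-finite-index-bimodules}, and one intertwines $qQq$ into $pPp$ instead. The payoff is twofold: the left support of the new intertwiner automatically lands in $(qQq)'\cap qNq=qB$, which is abelian; and the right support $f=v^\star v$ becomes an \emph{abelian} projection of $D=\theta(qQq)'\cap pNp$, because $fDf\cong v^\star(qB)v$. An abelian projection of a finite von Neumann algebra can be moved under any maximal abelian subalgebra $C\subset D$ containing $pA$, and every such $C$ is contained in $pPp$; taking adjoints at the end produces the desired $w$. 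Without this dualization and the abelian-projection step, your argument cannot be completed as written.
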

\begin{proof}
Let $A,B \subset N$ as in the statement of the lemma. Denote by $P$ and $Q$ the relative commutants of $A$ and $B$, i.e.~$P = A' \cap N$ and $Q = B' \cap N$.
Since $A \prec_N B$, we also have that $Q \prec_N P$, see \cite[Lemma 3.5]{vaes;explicit-computations-finite-index-bimodules}.  
Hence, we find projections $p \in P, q \in Q$, a partial isometry $v \in N$ with $vv^\star \leq q$, $v^\star v \leq p$ and a normal unital $\star$-homomorphism $\theta : qQq \to pPp$ satisfying 
\begin{align*}
x v = v \theta(x) \quad\text{ for all}\quad x \in qQq. 
\end{align*}
Note that $vv^\star \in (qQq)' \cap qNq = qB$. 
Write now $D = \theta(qQq)' \cap pNp$ and $f = v^\star v \in D$, then by spatiality, we have
\begin{align*}
f D f = (\theta(qQq)f)' \cap fNf = (v^\star Q v)' \cap fNf = v^\star (qB) v. 
\end{align*}
Hence $f$ is an abelian projection of $D$. Note that $pA \subset D$ is an abelian subalgebra.
Take now $C \subset D$ a maximal abelian subalgebra satisfying $pA \subset C$, and observe that necessarily $C \subset pPp$. Since $D$ is a finite von Neumann algebra, we find a partial isometry $v_1 \in D$ such that $v_1v_1^\star = f$ and $v_1^\star D v_1 \subset C$, see e.g.~\cite[Lemma C.2]{vaes;rigidity-results-bernoulli}. 
Put $w = vv_1$, then we still have
\begin{align*}
 xw = w \theta(x) \quad\text{ for all}\quad x \in qQq,
\end{align*}
since $v_1$ commutes with $\theta(qQq)$, and $ww^\star = vfv^\star = vv^\star \in qB \subset qQq$. Moreover, now we obtain $w^\star w = v_1^\star f v_1 \in C \subset pPp$. We get that $w^\star (qQq) w \subset pPp$, hence $ww^\star Q ww^\star \subset wPw^\star$. Taking the relative commutants in $ww^\star N ww^\star$, we obtain that $w A w^\star \subset ww^\star B$, hence $w^\star$ is the desired partial isometry.
\end{proof}

\subsection{Takesaki's modular theory}
Let $M$ be a von Neumann algebra equipped with a normal semifinite faithful (n.s.f.) weight $\varphi$. Denote by $\sigma^\vphi : \R \curvearrowright M$ the modular automorphism group associated to $\vphi$. The associated crossed product $M \rtimes \R$ of $M$ with the action $\sigma^\vphi$ is called the \emph{continuous core}. We denote by $\pi_\sigma : M \to M\rtimes \R$ the canonical embedding, and by $(\lambda(t))_{t \in \R}$ the canonical group of unitaries such that $\pi_\sigma(\sigma^\vphi_t(x)) = \lambda(t) \pi_\sigma(x)\lambda(t)^\star$. 

Consider $\mathbb{R}^+_0$ to be the dual of $\mathbb{R}$ under the pairing $\inp{t}{\mu}=\mu^{\mathbf{i}t}$ for $t \in \mathbb{R}$, $\mu \in \mathbb{R}^+_0$, and let $\hat{\sigma}^\vphi : \Rdual \curvearrowright M \rtimes \R$ be the dual action to $\sigma^\vphi$.
We have by Takesaki duality \cite[Theorem X.2.3]{takesaki;theory-operator-algebras-II} that
\begin{align*}
(M \rtimes \R) \rtimes \Rdual \cong M \ovt B(L^2(\R))
\end{align*}
by the isomorphism $\Phi : (M \rtimes \R) \rtimes \Rdual \to M \ovt B(L^2(\R))$ given by
\begin{align}
\{\Phi\big(\pi_{\hat\sigma} \circ \pi_\sigma(x)\big)\xi\}(t) &= (\sigma_t^\vphi)^{-1}(x)\xi(t),&&x \in M, \quad \xi \in L^2(\R, L^2(M)), \quad t \in \R,\nonumber\\
\{\Phi\big(\pi_{\hat\sigma} \circ \lambda(s)\big)\xi\}(t) &= \xi(t-s), &&s\in \R,\label{eq:takesakiduality}\\
\{\Phi\big(\lambda(\mu)\big)\xi\}(t) &=\overline{\inp{t}{\mu}}\xi(t), &&\mu \in \Rdual. \nonumber
\end{align}
Here $(\lambda(\mu))_{\mu \in \Rdual}$ is the canonical group of unitaries in the second crossed product.

There exists an n.s.f.~trace $\Tr_\varphi$ on $M\rtimes \R$ such that
\begin{align*}
\Tr_\varphi \circ \hat\sigma^\vphi_{\mu} = \mu^{-1} \Tr_\varphi  \text{ for all }\mu \in \Rdual,
\end{align*}
and such that the dual weight $\tilde\Tr_\varphi$ (see \cite[Definition X.1.16]{takesaki;theory-operator-algebras-II}) on $(M\rtimes \R)\rtimes \Rdual$ corresponds under the duality $\Phi$ to the weight $\varphi \otimes \Tr(h \fdot)$ on $M \ovt B(L^2(\R))$. Here $h$ is the hermitian operator given by $h^{\mathbf{i}t} = \lambda(t) \in B(L^2(\R))$, with $\lambda(t)$ the left regular representation defined by $\big(\lambda(t)f\big)(s) = f(s-t)$ for $f \in L^2(\R)$.

The following lemma is well known. For a proof, see e.g.~\cite[Proposition 2.4]{houdayer-ricard;free-araki-woods-factors}.
\begin{lemma}\label{lem.commutant}
Let $(M,\vphi)$ be a von Neumann algebra equipped with a n.s.f.~weight. Then $(L\R)' \cap M \rtimes_{\sigma^\vphi} \R = M_\vphi \ovt L\R$.
\end{lemma}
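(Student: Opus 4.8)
The plan is to prove the nontrivial inclusion $(L\R)'\cap M\rtimes\R\subseteq M_\vphi\ovt L\R$ by transporting the whole computation to the second crossed product through the Takesaki duality isomorphism $\Phi$ of \eqref{eq:takesakiduality}. The reverse inclusion is immediate: for $x\in M_\vphi$ one has $\pi_\sigma(x)=\pi_\sigma(\sigma^\vphi_t(x))=\lambda(t)\pi_\sigma(x)\lambda(t)^\star$, so $\pi_\sigma(M_\vphi)$ commutes with every $\lambda(t)$, and $L\R$ is abelian. For the forward direction, writing $\iota=\Phi\circ\pi_{\hat\sigma}$, we get a faithful normal embedding of $M\rtimes\R$ into $M\ovt B(L^2(\R))$ with $\iota(\lambda(s))=1\ovt\lambda(s)$ a translation and $\iota(\pi_\sigma(x))$ the decomposable operator whose fibre at $t\in\R$ is $\sigma^\vphi_{-t}(x)$. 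In particular $\iota(L\R)=1\ovt L\R$, so $(L\R)'\cap M\rtimes\R$ is carried onto $(1\ovt L\R)'\cap\iota(M\rtimes\R)$.

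The next step is to compute the commutant of $1\ovt L\R$ in the ambient algebra. Since $\R$ is abelian, $L\R$ is maximal abelian in $B(L^2(\R))$, i.e.\ $(L\R)'=L\R$, so the commutation theorem for von Neumann tensor products yields
\[(1\ovt L\R)'\cap\big(M\ovt B(L^2(\R))\big)=\big((M'\ot 1)\vee(1\ovt L\R)\big)'=(M'\ovt L\R)'=M\ovt L\R.\]
Combined with the previous paragraph this reduces the lemma to the identity $(M\ovt L\R)\cap\iota(M\rtimes\R)=M_\vphi\ovt L\R$, after which one pulls back through $\iota^{-1}$, using that $\iota(\pi_\sigma(x))=x\ot 1$ precisely when $x\in M_\vphi$.

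The main point, and the step I expect to require the most care, is to realise $\iota(M\rtimes\R)$ as a fixed-point algebra with an explicitly known action. By Takesaki's theorem the image $\pi_{\hat\sigma}(M\rtimes\R)$ is exactly the fixed-point algebra of the bidual action $\hat{\hat\sigma}$ of $\R$ on $(M\rtimes\R)\rtimes\Rdual$; transporting through $\Phi$ gives an action $\gamma=\Phi\circ\hat{\hat\sigma}\circ\Phi^{-1}$ on $M\ovt B(L^2(\R))$ with $\iota(M\rtimes\R)=\big(M\ovt B(L^2(\R))\big)^\gamma$. I would then read off $\gamma$ directly from the three defining relations of $\Phi$ and verify that
\[\gamma_r=\sigma^\vphi_{-r}\ovt\Ad\lambda(r),\qquad r\in\R.\]
This is the delicate computation: one checks that this automorphism fixes $\iota(\pi_\sigma(M))$ and $\iota(L\R)$ and scales each $\Phi(\lambda(\mu))$ by the corresponding character, as the bidual action must. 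The modular twist $\sigma^\vphi_{-r}$ on the first leg is exactly what compensates the translation $\Ad\lambda(r)$ of the decomposable fibres of $\iota(\pi_\sigma(x))$, which is the subtle sign/convention point.

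Granting this explicit form, the subalgebra $M\ovt L\R$ is $\gamma$-invariant, since $\Ad\lambda(r)$ fixes $L\R$ pointwise and hence $\gamma_r$ restricts to $M\ovt L\R$ as $\sigma^\vphi_{-r}\ovt\id$. Therefore
\[(M\ovt L\R)\cap\iota(M\rtimes\R)=(M\ovt L\R)^\gamma=M^{\sigma^\vphi}\ovt L\R=M_\vphi\ovt L\R,\]
using that the centraliser $M_\vphi$ is the fixed-point algebra of $\sigma^\vphi$. Pulling this back through $\iota^{-1}$ gives $(L\R)'\cap M\rtimes\R=\pi_\sigma(M_\vphi)\vee L\R=M_\vphi\ovt L\R$, as claimed.
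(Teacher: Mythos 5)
Your argument is correct; note that the paper does not prove this lemma itself but refers to \cite[Proposition 2.4]{houdayer-ricard;free-araki-woods-factors}, and the proof there is essentially the same Takesaki-duality computation you outline: identify $\iota(M\rtimes\R)$ with the fixed-point algebra of the bidual action and intersect with $(1\ovt L\R)' \cap (M\ovt B(L^2(\R))) = M\ovt L\R$. The one step you leave as a described verification does check out against the three relations in \eqref{eq:takesakiduality} (the candidate $\sigma^\vphi_{-r}\ovt\Ad\lambda(r)$ fixes $\iota(\pi_\sigma(M))$ and $\iota(L\R)$ pointwise and multiplies $\Phi(\lambda(\mu))$ by $\mu^{\mathbf{i}r}$, hence agrees with the transported bidual action up to replacing $r$ by $-r$, which does not affect the fixed-point algebra), so the argument is complete.
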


\subsection{Structural properties of infinite tensor products}
Let $(P,\phi)$ be a von Neumann algebra with a normal faithful state $\phi$. Whenever $I$ is a countable set, we write $P^I$ for the tensor product of $P$ indexed by $I$ with respect to $\phi$. The canonical product state on $P^I$ will be denoted by $\phi^I$. In this section, we study the structure of the infinite tensor product $(P^I, \phi^I)$. The first result we get, is a type classification for these infinite tensor products.
We show in particular that such tensor products never yield a factor of type $\III_0$. This result is probably well known, but we could not find a reference in the literature.
\begin{lemma}\label{lem.only-type-III1}
Let $(P,\phi)$ be a nontrivial factor equipped with a normal faithful state, and let $I$ be a countable infinite set. The factor $(P,\phi)^I$ is of type
\begin{itemize}[align=left,leftmargin=1.5em,topsep=0mm]\parskip2pt
\item[$\II_1$] if $\phi$ is tracial,
\item[$\III_{\lambda}, \lambda \in (0,1)$] if $\phi$ is nontracial and periodic with period $\frac{2\pi}{|\log \lambda|}$, and
\item[$\III_1$] if $\phi$ is not periodic.
\end{itemize}
Moreover, denoting by $P^I \rtimes \R$ the crossed product with the modular action of $\phi^I$, we have that $\mathcal{Z}(P^I \rtimes \R) = L(G)$, where $G < \R$ is the subgroup given by $G = \{t \in \R \mid \sigma^\phi_t = \id\}$.
\end{lemma}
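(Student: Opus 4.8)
The plan is to prove the ``Moreover'' statement first---computing the centre $\cZ(P^I\rtimes\R)$ of the continuous core---and then to read off the type of $P^I$ from this centre together with its flow of weights. Throughout I write $M=P^I$ and $\phi=\phi^I$, and I use that the modular flow $\sigma^{\phi}$ acts diagonally, as $\sigma^{\phi}_t$ on every tensor factor, so that $G=\{t\in\R\mid\sigma^{\phi}_t=\id\}$ is a closed subgroup of $\R$, hence equal to $\{0\}$, to $T\Z$ for some $T>0$, or to $\R$. I take for granted the classical fact that the infinite tensor product $M$ is a factor.

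The technical heart of the argument, and the step I expect to be the main obstacle, is the claim that for every $t\notin G$ the modular automorphism $\sigma^{\phi}_t$ is \emph{properly outer} on $M$. Since $M$ is a factor, an automorphism that is not properly outer is inner---from $a\,\sigma^{\phi}_t(x)=x\,a$ for all $x$ one deduces $a^\star a,aa^\star\in\cZ(M)=\C$, so that $a$ is a scalar multiple of a unitary implementing $\sigma^{\phi}_t$---so it suffices to show that $\sigma^{\phi}_t$ is not inner when $t\notin G$. Here I would argue by a $2$-norm approximation. Suppose $\sigma^{\phi}_t=\Ad u$ for a unitary $u\in M$, approximate $u$ in $\|\cdot\|_{\phi,2}$ by $u_F=E_F(u)$ with $E_F$ the $\phi$-preserving conditional expectation onto a finite tensor product $P^F$, and test against a contraction $x$ lying in a tensor factor $P_j$ with $j\notin F$. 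As $u_F$ commutes with such $x$, one gets $u_Fxu_F^\star=x$, whence $\|\sigma^{\phi}_t(x)-x\|_{\phi,2}=\|uxu^\star-u_Fxu_F^\star\|_{\phi,2}\le 2\|u-u_F\|_{\phi,2}$. Letting $F$ exhaust $I$ forces $\sigma^{\phi}_t=\id$ on $P$, i.e.\ $t\in G$, the desired contradiction.

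Granting this, I would compute $\cZ(M\rtimes\R)=L(G)$. The inclusion $L(G)\subset\cZ(M\rtimes\R)$ is immediate: for $t\in G$ the unitary $\lambda(t)$ implements $\sigma^{\phi}_t=\id$, hence commutes with $\pi_\sigma(M)$, and it trivially commutes with $L\R$. For the reverse inclusion, take $z\in\cZ(M\rtimes\R)$ and decompose it along the dual action $\hat\sigma:\Rdual\curvearrowright M\rtimes\R$ into spectral components $z_t$; averaging the central elements $\hat\sigma_\mu(z)$ keeps each $z_t$ central. Since $z_t$ transforms under $\hat\sigma$ by the same character $\mu\mapsto\mu^{\mathbf{i}t}$ as $\lambda(t)$, the element $z_t\lambda(t)^\star$ is $\hat\sigma$-invariant, hence lies in $(M\rtimes\R)^{\hat\sigma}=\pi_\sigma(M)$ by Takesaki duality; so $z_t=\pi_\sigma(a_t)\lambda(t)$ for some $a_t\in M$. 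By \cref{lem.commutant} we have $z_t\in M_\phi\ovt L\R$, forcing $a_t\in M_\phi$, and centrality of $z_t$ translates into the intertwining relation $a_t\,\sigma^{\phi}_t(x)=x\,a_t$ for all $x\in M$. For $t\notin G$ the key claim makes $\sigma^{\phi}_t$ properly outer, whence $a_t=0$; for $t\in G$ the relation reads $a_tx=xa_t$, so $a_t\in\cZ(M)=\C$ and $z_t\in\C\,\lambda(t)$. Reassembling, $z$ is supported on $G$, i.e.\ $z\in L(G)$. This spectral bookkeeping is exactly what correctly handles discrete $G$, where the central part sits in the \emph{point} spectrum of $\hat\sigma$ rather than in any absolutely continuous part.

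Finally I would read off the type. On $\cZ(M\rtimes\R)=L(G)\cong L^\infty(\hat G)$ the dual action restricts to the translation flow of $\Rdual=\hat\R$ dual to $G\hookrightarrow\R$, which is precisely the flow of weights of $M$. When $\phi$ is tracial, $G=\R$ and this is the transitive flow on $\hat\R$, so $M$ is semifinite; being a finite, infinite-dimensional factor, $M$ is of type $\II_1$. When $\phi$ is nontracial and periodic with period $T=\tfrac{2\pi}{|\log\lambda|}$, we have $G=T\Z$ and the flow is the nontrivial periodic flow on $\hat G\cong\T$, so $M$ is of type $\III_\lambda$. When $\phi$ is not periodic, $G=\{0\}$, the centre is trivial and so is the flow of weights, so $M$ is of type $\III_1$; the discreteness of the possible subgroups $G$ is exactly what excludes type $\III_0$. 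The passage from the flow of weights to the type is the Connes--Takesaki classification \cite{takesaki;theory-operator-algebras-II}.
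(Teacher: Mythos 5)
Your overall strategy---first proving that $\sigma^{\phi^I}_t$ is properly outer for every $t\notin G$, then computing $\cZ(P^I\rtimes\R)$ by Fourier analysis along the dual action, and finally reading off the type from the flow of weights---is a legitimate route and genuinely different from the paper's, which never establishes outerness of the modular flow: it instead uses the tail structure (the maps $\alpha_n$ converging to the embedding $P^\N\to 1\ot P^\N$) to show that a central element of the core must commute with an extra tensor copy of $P$, and hence lies in $(P^\N)_{\vphi}\ovt(L\R\cap D')$ for an explicit subalgebra $D\subset L^\infty(\R)$. However, the step you yourself call the technical heart is broken as written. For a nontracial state, right multiplication is not bounded on $L^2(M,\phi)$: the inequality $\|ab\|_{\phi,2}\le\|b\|\,\|a\|_{\phi,2}$ fails in general (already on $M_2(\C)$ with $\phi=\Tr(\rho\,\cdot)$, $\rho=\mathrm{diag}(1-\epsilon,\epsilon)$, $a=e_{22}$, $b=e_{21}$). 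Consequently the chain $\|\sigma^{\phi}_t(x)-x\|_{\phi,2}=\|uxu^\star-u_Fxu_F^\star\|_{\phi,2}\le 2\|u-u_F\|_{\phi,2}$ is invalid: the term $(u-u_F)xu^\star$ cannot be estimated by $\|u-u_F\|_{\phi,2}$, and moreover $u_Fxu_F^\star=xu_Fu_F^\star\ne x$, since $u_F=E_F(u)$ is only a contraction. The conclusion is still true and the argument is repairable---for instance, observe that $u$ lies in the centralizer of $\phi^I$ because $\phi^I\circ\sigma^{\phi^I}_t=\phi^I$, restrict to $x$ analytic for $\sigma^\phi$ and use the KMS condition to control $\phi\bigl(x^\star(u-u_F)^\star(u-u_F)x\bigr)$, or run the orthogonal-decomposition argument of \cref{lem.outer-core}---but some such device is indispensable; the tracial-case estimate does not carry over.

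A second, smaller issue is the decomposition of a central element $z$ into ``spectral components $z_t$'' under the dual action of the non-compact group $\Rdual$: pointwise Fourier coefficients do not exist for a general element of $M\rtimes\R$, so the identity reassembling $z$ from the $z_t$ needs justification---for example by first using \cref{lem.commutant} to place $z$ in $M_{\phi^I}\ovt L\R\cong L^\infty(\Rhat, M_{\phi^I})$ and rewriting the commutation with $\pi_\sigma(M)$ there, or by a genuine Arveson-spectrum argument. The paper's proof sidesteps both difficulties, at the price of not computing the $T$-invariant; your route, once repaired, yields the stronger statement that $\sigma^{\phi^I}_t$ is outer for all $t\notin G$.
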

\begin{proof}
Put $\vphi = \phi^I$. 
If $\phi$ is tracial, then clearly $\vphi$ is a trace on $(P,\phi)^I$, hence $(P,\phi)^I$ is a type $\II_1$ factor and clearly $\mathcal{Z}(P^I \rtimes \R) = L(\R)$. Assume now that $\phi$ is not tracial. Then $(P,\phi)^I$ cannot be semifinite and thus is a type $\III$ factor, see e.g.~\cite[Theorem XIV.1.4]{takesaki;theory-operator-algebras-III}. 

Identify $I = \N$ and let $N = P^\N \rtimes \R$ denote the crossed product w.r.t.~the modular group $\sigma^{\phi^I}$. For every $n \in \N$, let $\alpha_n$ denote the $\star$-isomorphism $N \to (P \ovt P^\N) \rtimes \R$ defined by
\begin{align*}
\alpha_n\big( \pi_\sigma(\ot_k x_k) \big) &= \pi_\sigma\big( x_n \ot \big(x_0 \ot x_1 \ot \cdots \ot x_{n-1} \ot x_{n+1} \ot \cdots \big)\big), &&\text{ for }x_k \in P, \\ 
\alpha_n( \lambda(t)) &= \lambda(t), &&\text{ for }t \in \R.
\end{align*}
Let $\iota : N \to (1 \ot P^\N) \rtimes \R \subset (P \ovt P^\N)\rtimes \R$ be the canonical embedding, and remark that for all $x \in N$, $\alpha_n(x) \to \iota(x)$ $\star$-strongly.
Then for every element $x \in \mathcal{Z}(N)$, we have that $\iota(x) \in \mathcal{Z}((P \ovt P^\N)\rtimes\R)$, and in particular $[\iota(x), \pi_\sigma(a \ot 1)] = 0$ for all $a \in P$. 

For any $\omega \in P_\star$, denote by $\text{ev}_\omega: L^2(\R, P \ovt P^\N) \to L^2(\R, P^\N)$ the linear map given by applying $\omega \ot \id$ to $P \ovt P^\N$ diagonally. Also denote by $\hat\iota : L^2(\R, P^\N) \to L^2(\R,P \ovt P^\N)$ the map induced by $\iota$. The above now means that for $a \in P$, $\text{ev}_\omega \circ \pi_\sigma(a \ot 1) \circ \hat\iota$ commutes with $\mathcal{Z}(N)$, when acting on $L^2(N)$. But note that 
\begin{align*}
\{\big(\text{ev}_\omega \circ \pi_\sigma(a \ot 1) \circ \hat\iota\big) \xi\}(s) = \omega(\sigma^\phi_{-s}(a))\xi(s)
\quad\text{for }\xi \in L^2(\R, L^2(P^\N)).
\end{align*}
This means that $\mathcal{Z}(N) \subset (1 \ot D)' \cap B(L^2(N))$, where $D \subset L^\infty(\R)$ is the von Neumann subalgebra generated by the functions $t \mapsto \omega(\sigma^\phi_{-t}(a))$ for $\omega \in P_\star, a \in P$. 
On the other hand, we have by \cref{lem.commutant} that $\mathcal{Z}(N) \subset (L\R)' \cap N = (P^\N)_\vphi \ovt L\R$. Combining both facts, we obtain that $\mathcal{Z}(N) \subset (P^\N)_\vphi \ovt L\R \cap (1 \ot D)' = (P^\N)_\vphi \ovt (L\R \cap D')$.

We now make the following case distinction. \textbf{Case 1:} The state $\phi$ is not periodic. In this case, the functions $t \mapsto \omega(\sigma^\phi_{-t}(a))$ for $\omega \in P_\star,a \in P$ separate points of $\R$, and hence we have that $D = L^\infty(\R)$. In particular, $L\R \cap D' = \C$, meaning that $\mathcal{Z}(N) \subset (P^\N)_\vphi$. As $P^\N$ is a factor, we conclude that $N$ is also a factor, and hence $P^\N$ is of type $\III_1$. 

\textbf{Case 2:} The state $\phi$ is periodic with period $T > 0$. Then $D$ consists of all bounded functions $f \in L^\infty(\R)$ that are $T$-periodic, i.e.~$f(s)=f(s+T)$ for all $s \in \R$. 
In particular, $D$ contains the function $f : t \mapsto \exp( \frac{2\pi \mathbf{i}t}{T})$. Using the Fourier transform $L\R \cong L^\infty(\Rhat)$, it is now easy to see that $L\R \cap D' \subset L\R \cap\{M_f\}' = L(T\Z)$, and hence $\mathcal{Z}(N) \subset (P^\N)_\vphi \ovt L(T\Z)$. Using the Fourier decomposition of elements in $P^\N \rtimes T\Z$, we now can conclude that $\mathcal{Z}(N) = L(T\Z)$, and hence $P^\N$ is a factor of type $\III_\lambda$ with $\lambda = e^{- \frac{2\pi}{T}}$. 
\end{proof}

For $\mu \in \Rdual$, we denote by $P_{\phi,\mu}$ the eigenvectors of $\Delta_\phi$ for $\mu$, i.e.~
\begin{align*}
P_{\phi,\mu} = \{ x \in P \mid \Delta_\phi \hat{x} = \mu \hat{x} \} = \{ x \in P \mid \sigma_\phi^t(x) = \mu^{\mathbf{i}t} x \}.
\end{align*}
Note that $\overline{P_{\phi,\mu}}^{\|\fdot\|_\phi} = \{ \xi \in L^2(P,\phi) \mid \Delta_\phi \xi = \mu \xi\}$. 
Recall that a state $\phi$ is called \emph{almost periodic} if $\Delta_\phi$ is diagonalizable. In general, we define $P_{\phi,\ap} \subset P$ as the subalgebra spanned by the eigenvectors of $\Delta_\phi$, i.e.~
\begin{align*}
P_{\phi,\ap} = \big(\lspan \bigcup_{\mu \in \Rdual} P_{\phi,\mu} \big)''. 
\end{align*}
The notation $P_{\phi,\ap}$ stands for the \emph{almost periodic part} of $P$, and this name is justified since $P_{\phi,\ap}$ is the maximal subalgebra $Q \subset P$ with \emph{$\phi$-preserving conditional expectation}, 
such that the restriction of $\phi$ to $Q$ is almost periodic. Here, a subalgebra $Q \subset P$  is said to be with $\phi$-preserving conditional expectation, if there exists a conditional expectation $E : P \to Q$ satisfying $\phi \circ E = \phi$.
In \cref{lem.technical-condition} below, we will show that if $(P,\phi)$ is a nontrivial factor equipped with a normal faithful state and $I$ is an infinite set, then the centralizer $(P^I)_{\phi^I}$ of the infinite tensor product is a factor if and only if $P_{\phi,\ap}$ is a factor. To show this, we need the following elementary lemma. 

\begin{lemma}
Let $(M,\psi)$, $(N,\vphi)$ be factors equipped with normal faithful states. For every $\mu \in \R^+_0$, it holds that
\begin{align}\label{eq:almost-periodic-part-tensor-product}
(M \ovt N)_{\psi \ot \vphi,\mu} \subset \bigoplus_{t \in \Rdual} \overline{M_{\psi,\mu t^{-1}} \ot N_{\vphi,t}}^{\|\fdot\|_{\psi \ot \vphi}},
\end{align}
as subsets of $L^2(M \ovt N, \psi\ot \vphi)$. In particular, 
$(M \ovt N)_{\psi \ot \vphi, \ap} =  M_{\psi,\ap} \ovt N_{\vphi,\ap}$.
\label{lem.almost-periodic-part-tensor-product}
\end{lemma}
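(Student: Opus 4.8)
The plan is to pass to the GNS level. Identify $L^2(M\ovt N,\psi\ot\vphi)$ with $L^2(M,\psi)\ot L^2(N,\vphi)$, under which the modular operator is $\Delta_{\psi\ot\vphi}=\Delta_\psi\ot\Delta_\vphi$. For $x\in(M\ovt N)_{\psi\ot\vphi,\mu}$ the GNS vector $\hat x$ satisfies $\Delta_{\psi\ot\vphi}^{\mathbf{i}t}\hat x=\mu^{\mathbf{i}t}\hat x$ for all $t$, hence $(\Delta_\psi\ot\Delta_\vphi)\hat x=\mu\hat x$, and $x\mapsto\hat x$ is injective. So it suffices to prove that the $\mu$-eigenspace of $\Delta_\psi\ot\Delta_\vphi$ is contained in (indeed equals) $\bigoplus_{t\in\Rdual}\overline{M_{\psi,\mu t^{-1}}}^{\|\fdot\|_\psi}\ot\overline{N_{\vphi,t}}^{\|\fdot\|_\vphi}$, and to recall that this Hilbert space tensor sum coincides with $\bigoplus_t\overline{M_{\psi,\mu t^{-1}}\ot N_{\vphi,t}}^{\|\fdot\|_{\psi\ot\vphi}}$, since $\|\fdot\|_{\psi\ot\vphi}$ restricts to the Hilbertian tensor norm on the algebraic tensor products.

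Let then $\xi$ be a $\mu$-eigenvector of $\Delta_\psi\ot\Delta_\vphi$. Decompose $L^2(N,\vphi)=K_{\mathrm{ap}}\oplus K_{\mathrm c}$ into the closed span of the eigenvectors of $\Delta_\vphi$ and its orthogonal complement, on which $\Delta_\vphi$ has no point spectrum, and write the corresponding decomposition $\xi=\sum_{t\in\Rdual}\xi_t+\xi_{\mathrm c}$, where $\xi_t=(1\ot P_t)\xi$ and $P_t$ is the projection onto $\overline{N_{\vphi,t}}^{\|\fdot\|_\vphi}=\ker(\Delta_\vphi-t)$. Since $1\ot\Delta_\vphi$ commutes with $\Delta_\psi\ot\Delta_\vphi$, each summand is again a $\mu$-eigenvector. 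On the range of $1\ot P_t$ we have $1\ot\Delta_\vphi=t$, so $(\Delta_\psi\ot 1)\xi_t=\mu t^{-1}\xi_t$, i.e.~$\xi_t\in\overline{M_{\psi,\mu t^{-1}}}^{\|\fdot\|_\psi}\ot\overline{N_{\vphi,t}}^{\|\fdot\|_\vphi}$, which is one of the summands above.

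The crux is to show $\xi_{\mathrm c}=0$, and here I would use a direct integral decomposition. Writing $B=\log\Delta_\vphi|_{K_{\mathrm c}}$, the absence of point spectrum means its scalar spectral measure $\nu$ is non-atomic, and we may write $K_{\mathrm c}=\int^{\oplus}_{\R}K_b\,d\nu(b)$ with $B$ acting as multiplication by $b$. Then $L^2(M,\psi)\ot K_{\mathrm c}=\int^{\oplus}_{\R}(L^2(M,\psi)\ot K_b)\,d\nu(b)$, on which $\log(\Delta_\psi\ot\Delta_\vphi)=\log\Delta_\psi\ot 1+1\ot B$ acts fibrewise as $\log\Delta_\psi+b$. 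The eigenvalue equation forces, for $\nu$-almost every $b$, that the fibre component $\xi_{\mathrm c}(b)$ lie in $\ker(\log\Delta_\psi-(\log\mu-b))\ot K_b$. Since $L^2(M,\psi)$ is separable, $\Delta_\psi$ has at most countably many eigenvalues, so $\ker(\log\Delta_\psi-(\log\mu-b))\neq0$ only for $b$ in a countable set; this set is $\nu$-null because $\nu$ is non-atomic, whence $\xi_{\mathrm c}(b)=0$ for $\nu$-a.e.~$b$ and $\xi_{\mathrm c}=0$. I expect this step, which rules out eigenvectors manufactured from the continuous spectrum of one factor, to be the main obstacle; everything else is bookkeeping.

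Finally, for the displayed identity: the inclusion just proved shows every $\mu$-eigenoperator of $\psi\ot\vphi$ lies, as an $L^2$-vector, in $L^2(M_{\psi,\ap}\ovt N_{\vphi,\ap})$, a closed subspace of $L^2(M\ovt N)$ because $M_{\psi,\ap}$ and $N_{\vphi,\ap}$ each carry a state preserving conditional expectation and hence so does their tensor product. Composing with this conditional expectation yields $(M\ovt N)_{\psi\ot\vphi,\ap}\subset M_{\psi,\ap}\ovt N_{\vphi,\ap}$. The reverse inclusion is immediate, since for $a\in M_{\psi,\lambda}$ and $b\in N_{\vphi,t}$ one has $\sigma^{\psi\ot\vphi}_s(a\ot b)=(\lambda t)^{\mathbf{i}s}(a\ot b)$, so $a\ot b\in(M\ovt N)_{\psi\ot\vphi,\lambda t}$, and such elements generate $M_{\psi,\ap}\ovt N_{\vphi,\ap}$.
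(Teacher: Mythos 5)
Your proof is correct and follows essentially the same route as the paper: pass to the GNS space, use $\Delta_{\psi\ot\vphi}=\Delta_\psi\ot\Delta_\vphi$, decompose the $\mu$-eigenspace into tensor products of eigenspaces, and recover the operator-level statement via the $\psi\ot\vphi$-preserving conditional expectation onto $M_{\psi,\ap}\ovt N_{\vphi,\ap}$. The one difference is that the step you correctly single out as the crux --- that the continuous spectral part of $\Delta_\vphi$ contributes nothing to any eigenvector of $\Delta_\psi\ot\Delta_\vphi$, which you establish by a direct integral argument against a non-atomic spectral measure --- is simply asserted in the paper (``hence $\hat x=\sum_k\xi_k\ot\eta_k$ \dots''), so your write-up supplies a detail the paper leaves to the reader.
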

\begin{proof}
Let $(M,\psi)$ and $(N,\vphi)$ be factors with normal faithful states, and fix $\mu \in \Rdual$. In this proof, we will write $L^2(M)$ and $L^2(N)$ for $L^2(M,\psi)$ and $L^2(N,\vphi)$ respectively. 
Assume that $x \in (M\ovt N)_{\psi \ovt \vphi, \mu}$, and consider $\hat{x} \in L^2(M)\ot L^2(N)$. Note that $(\Delta_\psi \ot \Delta_\vphi)\hat{x} = \mu \hat{x}$, hence $\hat{x} = \sum_{k \in \N} \xi_k \ot \eta_k$, where $\xi_k \in L^2(M)$ and $\eta_k \in L^2(N)$ are eigenvectors for $\Delta_\psi$ and $\Delta_\vphi$ respectively, such that the product of the eigenvalues of $\xi_k$ and $\eta_k$ equals $\mu$. 
Now \eqref{eq:almost-periodic-part-tensor-product} follows from the observation that for every von Neumann algebra $(P,\phi)$ equipped with a normal faithful state, and for every $\mu \in \R^+_0$, $\overline{P_{\phi,\mu}}^{\|\fdot\|_\phi} = \{\xi \in L^2(P) \mid \Delta_\phi \xi = \mu \xi\}$.

It follows immediately from \eqref{eq:almost-periodic-part-tensor-product} that $(M \ovt N)_{\psi \ot \vphi, \mu} \subset  L^2(M_{\psi,\ap} \ovt N_{\vphi,\ap}, \psi \ot \vphi)$, and hence for every $x \in (M\ovt N)_{\psi \ot \vphi,\mu}$ we get $E_{M_{\psi,\ap} \ovt N_{\vphi,\ap}}(x) = x$. This demonstrates that indeed $(M \ovt N)_{\psi \ot \vphi, \ap} =  M_{\psi,\ap} \ovt N_{\vphi,\ap}$.
\end{proof}

\begin{lemma}
Let $(P,\phi)$ be a nontrivial factor equipped with a normal faithful state, and let $I$ be a countable infinite set.
Then it holds that $(P^I)_{\phi^I}  = (P_{\phi,\ap}^I)_{\phi^I}$.
In particular, the following two statements are equivalent:
\begin{enumerate}[(i)]
\item $(P^I)_{\phi^I}$ is a factor.
\item $P_{\phi,\ap}$ is a factor.
\end{enumerate}
\label{lem.technical-condition}
\end{lemma}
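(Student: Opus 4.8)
The plan is to reduce the claimed identity of centralisers to a statement about almost periodic parts, and then to settle the factoriality equivalence (i)$\Leftrightarrow$(ii) by an asymptotic argument exploiting that $I$ is infinite.

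First I would prove the tensor factorisation of the almost periodic part,
\[
(P^I)_{\phi^I,\ap} = (P_{\phi,\ap})^I .
\]
The inclusion $\supseteq$ follows from the maximality characterisation of $P_{\phi,\ap}$: the subalgebra $(P_{\phi,\ap})^I$ carries the $\phi^I$-preserving conditional expectation given by the tensor product of the $\phi$-preserving expectation $E\colon P\to P_{\phi,\ap}$, and the restriction of $\phi^I$ to it is the product of the almost periodic state $\phi|_{P_{\phi,\ap}}$, hence almost periodic. For $\subseteq$, take an eigenvector $x\in (P^I)_{\phi^I,\mu}$. For finite $F\subset I$ the expectation $E_{P^F}=\id\ot\phi^{I\setminus F}$ commutes with $\sigma^{\phi^I}$, so $E_{P^F}(x)\in (P^F)_{\phi^F,\mu}$; by a finite induction based on \cref{lem.almost-periodic-part-tensor-product} (using at each step that finite tensor products of factors are factors) this lies in $(P_{\phi,\ap})^F\subset (P_{\phi,\ap})^I$. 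Since $E_{P^F}(x)\to x$ in $\|\fdot\|_{\phi^I}$ while $E_{(P_{\phi,\ap})^I}$ is $\|\fdot\|_{\phi^I}$-contractive and fixes every $E_{P^F}(x)$, letting $F\nearrow I$ gives $E_{(P_{\phi,\ap})^I}(x)=x$, i.e.\ $x\in (P_{\phi,\ap})^I$. Because the centraliser is exactly the $\mu=1$ eigenspace, this yields $(P^I)_{\phi^I}\subset (P_{\phi,\ap})^I$; as $(P_{\phi,\ap})^I$ has a $\phi^I$-preserving expectation, its modular group is the restriction of $\sigma^{\phi^I}$ (Takesaki's theorem), so being $\sigma^{\phi^I}$-fixed inside $(P_{\phi,\ap})^I$ is the same as lying in its centraliser. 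Hence $(P^I)_{\phi^I}=(P_{\phi,\ap}^I)_{\phi^I}$, the first assertion.

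Writing $Q=P_{\phi,\ap}$, $\psi=\phi|_Q$ and $M=Q^I$, so that $(P^I)_{\phi^I}=M_{\psi^I}$, I would then prove (i)$\Leftrightarrow$(ii). For (i)$\Rightarrow$(ii) I argue by contraposition: if $Q$ is not a factor then, since $\sigma^\psi$ acts trivially on $\cZ(Q)$, we have $\cZ(Q)\subset Q_\psi$; embedded at a single site $i_0$, $\cZ(Q)$ is a nontrivial subalgebra of $\cZ(M)\cap M_{\psi^I}\subset\cZ(M_{\psi^I})$, so the centraliser is not a factor. For (ii)$\Rightarrow$(i) it suffices to show $M_{\psi^I}'\cap M=\C$, since $\cZ(M_{\psi^I})\subset M_{\psi^I}'\cap M$. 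As $Q$ is a factor, $M$ is a factor generated by the single-site eigenvectors $x_{(i)}$ with $x\in Q_{\psi,\mu}$, so I only need that any $z\in M_{\psi^I}'\cap M$ commutes with each $x_{(i_0)}$. This is clear for $\mu=1$; for $\mu\ne 1$ choose $0\ne y\in Q_{\psi,\mu^{-1}}$ and distinct sites $i_n\in I\setminus\{i_0\}$. Then $x_{(i_0)}y_{(i_n)}\in M_{\psi^I}$, so $z\,x_{(i_0)}y_{(i_n)}=x_{(i_0)}y_{(i_n)}\,z$; multiplying on the right by $y^\star_{(i_n)}$ and letting $n\to\infty$, the moving single-site elements $(yy^\star)_{(i_n)}$ converge $\sigma$-weakly to $\psi(yy^\star)1$ and $y_{(i_n)}\,z\,y^\star_{(i_n)}\to\psi(yy^\star)\,z$ by the clustering property of the product state. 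As $\psi(yy^\star)>0$, this gives $[z,x_{(i_0)}]=0$, whence $z\in\cZ(M)=\C$.

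The hard part is the direction (ii)$\Rightarrow$(i): the centraliser of an infinite tensor product is strictly larger than $(Q_\psi)^I$, so $\cZ(M_{\psi^I})$ cannot be computed factorwise, and it is precisely the use of a balancing eigenvector $y$ at a site moving off to infinity that exploits the infinitude of $I$ to collapse the relative commutant. The main care needed there is to justify the $\sigma$-weak limits uniformly, which I would do by approximating $z$ by finitely supported elements $E_{P^F}(z)$ and using that $(yy^\star)_{(i_n)}$ commutes with these for $i_n\notin F$; note that only $\psi(yy^\star)>0$ is used, not $\psi(y)=0$.
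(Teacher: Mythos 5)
Your proof of the identity $(P^I)_{\phi^I}=(P_{\phi,\ap}^I)_{\phi^I}$ and of (i)$\Rightarrow$(ii) coincides with the paper's: conditional expectations $E_{P^\cF}$ onto finite sub-tensor-products combined with \cref{lem.almost-periodic-part-tensor-product} and a limit argument for the first, and the observation $\cZ(P_{\phi,\ap})\subset P_\phi$ embedded at one site for the second. Where you genuinely diverge is (ii)$\Rightarrow$(i): the paper simply cites \cite[Lemma 2.4]{vaes-verraedt;classification-type-III-bernoulli-crossed-products}, whereas you give a self-contained clustering argument proving the stronger statement $(Q^I)_{\psi^I}'\cap Q^I=\C$ by balancing a single-site eigenvector $x_{(i_0)}\in Q_{\psi,\mu}$ against $y_{(i_n)}$ with $y\in Q_{\psi,\mu^{-1}}$ at sites escaping to infinity. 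This is a sound and in fact more informative route, but the one delicate step is exactly the limit $y_{(i_n)}zy_{(i_n)}^\star\to\psi(yy^\star)z$, and the justification you propose -- approximating $z$ by $E_{Q^\cF}(z)$ -- does not close as stated outside the tracial case: the error $y_{(i_n)}(z-E_{Q^\cF}(z))y_{(i_n)}^\star$ must be tested against vectors $y_{(i_n)}^\star a\Omega$ which form a non-precompact (pairwise almost orthogonal, equal-norm) family, so the strong convergence $E_{Q^\cF}(z)\to z$ gives no control uniform in $n$; for a nontracial $\psi$ one also has no bound of the form $\|wc\|_\psi\lesssim\|w\|_\psi\|c\|$. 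The statement is nevertheless true, and the standard repair is to test against $a\Omega,b\Omega$ with $a,b$ finitely supported and move the moving elements onto the vectors: for $c\in Q$ analytic one gets $\langle zac_{(i_n)}\Omega,bc_{(i_n)}\Omega\rangle=\psi^I\big(b^\star za\,(c\,\sigma^\psi_{-\mathbf{i}}(c^\star))_{(i_n)}\big)$, which converges to $\psi(c^\star c)\langle za\Omega,b\Omega\rangle$ by the elementary fact that single-site elements at escaping sites converge $\sigma$-weakly to their $\psi$-expectation, together with normality of the functional $\psi^I(b^\star za\,\fdot)$. With that substitution your argument is complete; note also that you implicitly use the (standard) factoriality of $Q^I$ at the very end.
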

\begin{proof}
Let $P$, $I$ be as in the statement, and put $Q = P_{\phi,\ap}$ and $\vphi = \phi^I$. The inclusion $(Q^I)_{\vphi} \subset (P^I)_{\vphi}$ being obvious, take $x \in (P^I)_{\vphi}$; we will show that $x \in Q^I$. Note that for all finite subsets $\cF \subset I$, $E_{P^\cF}(x) \in (P^\cF)_{\vphi}$, and by \cref{lem.almost-periodic-part-tensor-product}, we have that
\begin{align*}
E_{P^\cF}(x) \in (P^\cF)_{\vphi} \subset (P^\cF)_{\vphi,\ap} = Q^\cF \subset Q^I. 
\end{align*}
As $x$ is a limit point of $\{E_{P^\cF}(x) \mid \cF \subset I \text{ finite}\}$ in the strong topology, it follows that also $x \in Q^I$. 

The implication (ii) $\Rightarrow$ (i) follows now directly from \cite[Lemma 2.4]{vaes-verraedt;classification-type-III-bernoulli-crossed-products}. For the reverse implication, 
note that $\mathcal{Z}(P_{\phi,\ap}) \subset P_\phi$. Thus, if 
$x \in \mathcal{Z}(P_{\phi,\ap})$ is a nontrivial element, then $x \otimes 1 \otimes 1 \otimes \cdots$ belongs to $(P^I)_\vphi$, and commutes with all elements of $(P_{\phi,\ap})^I$. 
In particular, $(P^I)_\vphi = (P_{\phi,\ap}^I)_\vphi$ is not a factor.
\end{proof}

We recall the definition of the (generalized) Bernoulli action.
Let $(P,\phi)$ be a von Neumann algebra with a normal faithful state $\phi$, and let $I$ be a countable set.
Consider a countable group $\Lambda$ that acts on $I$, and let $\Lambda$ act on $\P^I$ by the Bernoulli action
\begin{align*}
\rho(s)\big(\otimes_{k \in I} a_k\big) &= \otimes_{k \in I} a_{s^{-1} \cdot k},&\text{for }s \in \Lambda, a_h \in \P.
\end{align*}
The von Neumann algebra $(P,\phi)$ is called the \emph{base algebra} for the Bernoulli action, and the crossed product $P^I \rtimes \Lambda$ is called the \emph{Bernoulli crossed product}.

We now study when a Bernoulli action $\Lambda \curvearrowright P^I$ and its associated action on the continuous core $\Lambda \curvearrowright P^I \rtimes \R$ are properly outer. 

\begin{remark}\label{remark.shift-outer}
Let $(P,\phi)$ be a nontrivial factor equipped with a normal faithful state, and let $I$ be a countable infinite set. 
Denote by $P^I$ the infinite tensor product w.r.t~$\phi$, and by $\pi_k : P \to P^I$ the embedding at position $k$. 
Let $\alpha : I \to I$ be any nontrivial permutation, and let $\hat\alpha : P^I \to P^I$ denote the induced automorphism given by
\begin{align*}
 \hat\alpha(\pi_{k}(x)) &= \pi_{\alpha(k)}(x), \quad \text{for } x \in P, k \in I.
\end{align*}
Then $\hat\alpha$ is (properly) outer, unless $P$ is of type $\I$ and $\{k \in I \mid \alpha(k)\not=k\}$ is finite. 
Indeed, it is well known that for any nontrivial permutation $\sigma$ on $n$ elements, the induced flip automorphism on $P \ovt \cdots \ovt P$ given by $x_1 \ot \cdots \ot x_n \mapsto x_{\sigma(1)} \ot \cdots \ot x_{\sigma(n)}$ is inner if and only if $P$ is a type $\I$ factor, see e.g.~\cite[Theorem 5]{sakai;automorphisms-tensor-products}. If $\alpha$ moves infinitely many points of $I$, the result is the content of \cite[Lemma 2.5]{vaes-verraedt;classification-type-III-bernoulli-crossed-products}. 
\end{remark}

If $(P,\phi)$ is a nontrivial factor equipped with a normal faithful state that is not almost periodic, and $\Lambda \curvearrowright I$ is any faithful action, the induced action $\Lambda \curvearrowright P^I \rtimes \R$ on the continuous core of $P^I$ is always (properly) outer, as stated in the next result. Combined with \cref{lem.only-type-III1}, we get in particular that $P^I \rtimes \Lambda$ is always a type $\III_1$ factor, even if $\Lambda$ is not icc, as its continuous core $(P^I \rtimes \Lambda) \rtimes \R = (P^I \rtimes \R) \rtimes \Lambda$ is a factor.
\begin{lemma}
\label{lem.outer-core}
Let $(P,\phi)$ be a nontrivial factor equipped with a normal faithful not almost periodic state, and let $I$ be a countable set. 
Let $\alpha : I \to I$ be any nontrivial permutation. Denote by $P^I \rtimes \R$ the crossed product with the modular action of $\phi^I$.
Then the induced automorphism $\hat\alpha \in \Aut(P^I \rtimes \R)$ on the continuous core,
given by
\begin{align*}
 \hat\alpha(\pi_{k}(x)) &= \pi_{\alpha(k)}(x), \quad \text{for } x \in P, k \in I,\\
 \hat\alpha(\lambda(t)) &= \lambda(t), \qquad\:\:\: \text{for }t \in \R,
\end{align*}
is (properly) outer.
Here, $\pi_k : P \to P^I \rtimes \R$ denotes the embedding at position $k$.
\end{lemma}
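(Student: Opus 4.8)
The plan is to prove proper outerness \emph{directly}: I will show that no nonzero $y \in M := P^I \rtimes \R$ can satisfy $y\,\hat\alpha(x) = x\,y$ for all $x \in M$. Suppose such a $y \neq 0$ existed. Testing the relation on $x = \lambda(t)$ and using $\hat\alpha(\lambda(t)) = \lambda(t)$ shows that $y$ commutes with $L\R$, so by \cref{lem.commutant} we have $y \in (L\R)' \cap M = (P^I)_{\phi^I} \ovt L\R$. By \cref{lem.technical-condition} the centraliser $(P^I)_{\phi^I}$ lies inside $Q^I$, where $Q := P_{\phi,\ap}$; and since $\phi$ is \emph{not} almost periodic, $Q \subsetneq P$, so the $\phi$-preserving conditional expectation $E_Q : P \to Q$ has nonzero kernel. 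Fixing a position $k_0$ with $j_0 := \alpha(k_0) \neq k_0$ and testing the relation on $x = \pi_\sigma(\pi_{k_0}(a))$ leaves us with $y\,\pi_\sigma(\pi_{j_0}(a)) = \pi_\sigma(\pi_{k_0}(a))\,y$ for all $a \in P$.

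To exploit this I would realise $M$ spatially on $L^2(\R, L^2(P^I))$, where $\pi_\sigma(z)$ acts as the decomposable field $s \mapsto \sigma^{\phi^I}_{-s}(z)$ and $\lambda(t)$ acts by translation. Since $y$ commutes with all translations and lies in $(P^I)_{\phi^I} \ovt L\R$, it is a convolution operator whose operator-valued kernel $\kappa$ takes values in the centraliser $(P^I)_{\phi^I} \subseteq Q^I$, and $y \neq 0$ forces $\kappa \not\equiv 0$. Comparing kernels in the relation above (interpreted after disintegration over the dual variable) turns it into the pointwise identity of elements of $P^I$, acting on $L^2(P^I)$,
\[ \kappa(t)\,\pi_{j_0}(a) = \pi_{k_0}(\sigma^\phi_{-t}(a))\,\kappa(t), \qquad a \in P,\ t \in \R, \]
where $\pi_k$ now denotes the embedding of $P$ into $P^I$ at position $k$.

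Now take $a \in \ker E_Q$. Because $\kappa(t) \in Q^I$, its $j_0$-slot lies in $Q$, so after right multiplication by $\pi_{j_0}(a)$ the $j_0$-slot of the left-hand side lands in $\ker E_Q$, which is orthogonal to $L^2(Q)$, whereas the $j_0$-slot of the right-hand side remains inside $Q$. Projecting onto the subspace whose $j_0$-slot lies in $L^2(P,\phi) \ominus L^2(Q,\phi)$ therefore annihilates the right-hand side and fixes the left, giving $\kappa(t)\,\pi_{j_0}(a) = 0$. Slicing this identity at position $j_0$, using the $\phi$-independence of distinct tensor sites together with faithfulness of $\phi^I$, shows that every $j_0$-slice $g \in Q$ of $\kappa(t)$ satisfies $g\,\ker E_Q = 0$, equivalently $gP \subseteq Q$.

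The step I expect to be the main obstacle is the resulting purely algebraic input: \textbf{in a factor $P$ with $Q = P_{\phi,\ap} \neq P$, no nonzero $g \in Q$ satisfies $gP \subseteq Q$.} I would prove it by exhaustion. Passing to the left support, it suffices to exclude a nonzero projection $p \in Q$ with $pP \subseteq Q$. The family of such projections is closed under arbitrary joins (checked on $L^2$), so it admits a maximal element $p_{\max}$. If $p_{\max} \neq 1$, then comparison of projections in the factor $P$ produces a nonzero partial isometry $w$ linking a subprojection of $p_{\max}$ to a subprojection $f_1 \leq 1 - p_{\max}$; since $w \in p_{\max}P \subseteq Q$, transporting the corner inclusion $p_{\max}P p_{\max} \subseteq Q$ through $w$ yields $f_1 P \subseteq Q$, contradicting maximality. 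Hence $p_{\max} = 1$ and $P = Q$, which is absurd. Applying this to the slices forces $\kappa \equiv 0$, so $y = 0$, and this contradiction proves that $\hat\alpha$ is properly outer. Besides this lemma, the delicate point is the kernel bookkeeping in the second step, which is unavoidable because in the core the centraliser and $L\R$ are genuinely entangled and $\pi_\sigma(\pi_{j_0}(Q))$ does not commute with $L\R$.
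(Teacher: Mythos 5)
Your proposal is correct in substance and rests on exactly the same two structural inputs as the paper's proof: any intertwiner $y$ commutes with $L\R$, hence lands in $(P^I)_{\phi^I}\ovt L\R \subset Q^I \rtimes \R$ with $Q = P_{\phi,\ap}$ (via \cref{lem.commutant} and \cref{lem.technical-condition}), and non--almost-periodicity forces $P \ominus Q \neq 0$, so $\hat\alpha$ genuinely moves the ``non-almost-periodic content'' from slot $k_0$ to slot $j_0$. Where you diverge is in how the contradiction is extracted. The paper decomposes $L^2\big(N \ominus (P^I)_{\vphi,\ap}\rtimes\R\big)$ as $\bigoplus_{\cF} K_\cF$ with $K_\cF = L^2\big(\R, L^2(Q^I(P\ominus Q)^\cF Q^I)\big)$, notes that $\Ad v$ preserves each $K_\cF$ while $\hat\alpha$ sends $K_\cF$ to $K_{\alpha(\cF)}$, and is done in three lines; this completely sidesteps both your disintegration over $\R$ and your auxiliary algebraic lemma. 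Your route, by contrast, works directly with an arbitrary nonzero intertwiner $y$ rather than a unitary, at the price of two extra pieces of work. First, the ``convolution kernel'' $\kappa(t)$ of a general element of $(P^I)_{\phi^I}\ovt L\R$ is only a distribution, so the pointwise identity $\kappa(t)\pi_{j_0}(a) = \pi_{k_0}(\sigma^\phi_{-t}(a))\kappa(t)$ is not literally available; the argument survives if you replace kernels by slice maps, i.e.\ apply the relation to vectors $f \ot \hat z$ with $z \in P^{I\setminus\{j_0\}}$, use that $y$ preserves the decomposition of $L^2(P^I)$ according to whether the $j_0$-slot lies in $L^2(Q)$ or its complement, conclude that $y$ annihilates $L^2(\R)\ot \overline{\ker E_Q}_{j_0}\ot L^2(P^{I\setminus\{j_0\}})$, and only then slice with normal functionals on the remaining legs. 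Second, your algebraic lemma (no nonzero $g \in Q$ with $gP \subseteq Q$ when $Q \subsetneq P$ and $P$ is a factor) is genuinely needed in your approach and your maximality-plus-comparison proof of it is correct, up to a left/right support mix-up: from $gP\subseteq Q$ you should pass to the \emph{right} support via $g^\star g P \subseteq g^\star Q \subseteq Q$, and in the comparison step the partial isometry $w$ must have its \emph{left} support under $p_{\max}$ so that $w \in p_{\max}P \subseteq Q$ and $w^\star w P = w^\star(wP) \subseteq Q$. With those repairs your argument closes; it is simply a heavier path to the same destination, though it does have the mild advantage of proving proper outerness directly instead of relying on the factoriality of $P^I\rtimes\R$ to upgrade outerness.
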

\begin{proof}
Denote $\vphi = \phi^I$, $Q = P_{\phi,\ap}$, $N = P^I \rtimes \R$, and denote by $\tilde\vphi$ the dual weight on $N$ w.r.t.~$\vphi$. 
For every finite nonempty subset $\cF \subset I$, let $K_\cF \subset L^2(N, \tilde\vphi) = L^2(\R, L^2(P^I, \vphi))$ be the subspace defined as 
\begin{align*}
 K_\cF &= L^2\big(\R, L^2( Q^I (P \ominus Q)^\cF  Q^I, \vphi)\big).
\end{align*}
By \cref{lem.almost-periodic-part-tensor-product}, we get the orthogonal decomposition
\begin{align*}
 L^2(N \ominus (P^I)_{\vphi,\ap} \rtimes \R, \tilde\vphi)
  =  L^2\big(\R, L^2(P^I \ominus (P^I)_{\vphi,\ap}, \vphi)\big)
  = \bigoplus_{\cF \in J} K_{\cF},
\end{align*}
where $J$ denotes the set of all finite nonempty subsets of $I$. Note that since $\phi$ is not almost periodic, $K_{\cF}$ is nonzero for every $\cF \in J$.

Now suppose that $\alpha : I \to I$ is a nontrivial permutation, and denote by $\hat\alpha \in \Aut(N)$ the induced automorphism. Assume that $v \in N$ is a unitary such that $vx = \hat\alpha(x)v$ for all $x \in N$. In particular, $v$ commutes with $L\R$ and hence $v \in (P^I)_\vphi \ovt L\R \subset Q^I \rtimes \R$ by \cref{lem.commutant}. Therefore, the map on $L^2(N, \tilde\vphi)$ induced by the automorphism $\Ad v$ on $N$ leaves all subspaces $K_\cF$ invariant, whereas $\hat\alpha$ sends $K_\cF$ to $K_{\alpha(\cF)}$ for all finite nonempty subsets $\cF \subset I$. This is absurd, as $\alpha$ is a nontrivial permutation. 
We conclude that $\hat\alpha$ is outer.
\end{proof}

If however $(P,\phi)$ is a nontrivial factor equipped with an almost periodic state, and $\Lambda \curvearrowright I$ is any faithful action, then the induced action $\Lambda \curvearrowright P^I \rtimes \R$ on the continuous core of $P^I$ is not always properly outer. For example, if $P$ is a type $\I$ factor and $g \in \Lambda$ moves only a finite number of points of $I$, i.e.~the set $\{ k \in I \mid g \cdot k \not= k\}$ is finite, then the induced automorphism by $g$ on $P^I \rtimes \R$ is implemented by a unitary. However, if we require that any nontrivial element moves infinitely many points of $I$, the induced action $\Lambda \curvearrowright P^I \rtimes \R$ is always properly outer, due to the following result.
\begin{lemma}
Let $(P,\phi)$ be a nontrivial factor equipped with a normal faithful almost periodic state, and let $I$ be a countable infinite set. 
Let $\alpha : I \to I$ be any nontrivial permutation.
Denote by $P^I \rtimes \R$ the crossed product with the modular action of $\phi^I$.
Then the induced automorphism $\hat\alpha \in \Aut(P^I \rtimes \R)$ on the continuous core,
given by 
\begin{align*}
 \hat\alpha(\pi_{k}(x)) &= \pi_{\alpha(k)}(x), \quad \text{for } x \in P, k \in I,\\
 \hat\alpha(\lambda(t)) &= \lambda(t), \qquad\:\:\: \text{for }t \in \R,
\end{align*}
is properly outer, unless $P$ is a type $I$ factor and the set $\{k \in I \mid \alpha(k)\not=k\}$ is finite.
Here, $\pi_k : P \to P^I \rtimes \R$ denotes the embedding at position $k$.
\label{lem.outer-core-ap}
\end{lemma}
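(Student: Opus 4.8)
The plan is to reduce proper outerness of $\hat\alpha$ on the core $N := P^I \rtimes \R$ to proper outerness on $P^I$ of the \emph{modular twists of the flip}, i.e.\ the automorphisms $\beta \circ \sigma^\vphi_s$ ($s \in \R$), where $\vphi = \phi^I$ and $\beta \in \Aut(P^I)$ is the permutation automorphism with $\pi_\sigma \circ \beta = \hat\alpha|_{\pi_\sigma(P^I)} \circ \pi_\sigma$. The decomposition into the subspaces $K_\cF$ that drives \cref{lem.outer-core} is useless here: since $\phi$ is almost periodic we have $P_{\phi,\ap} = P$, so \emph{all} of $L^2(N)$ is the almost periodic part and there is no $\cF$-grading left for $\Ad v$ to preserve. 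Instead I would exploit the rigidity of the modular flow together with the analytic eigenvectors supplied by almost periodicity.

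\emph{Step 1 (reduction to $P^I$).} Suppose $y \in N$ is nonzero with $y\,\hat\alpha(x) = x y$ for all $x$. Taking $x = \lambda(s)$ and using $\hat\alpha(\lambda(s)) = \lambda(s)$ shows $y$ commutes with $L\R$, so $y \in (L\R)' \cap N = (P^I)_\vphi \ovt L\R$ by \cref{lem.commutant}. Since $\vphi = \phi^I$ is $\beta$-invariant, $\sigma^\vphi$ commutes with $\beta$; moreover $\hat\alpha$ commutes with the dual action $\hat\sigma^\vphi$, because $\hat\sigma^\vphi_\mu$ fixes $\pi_\sigma(P^I)$ and scales $\lambda(s)$ by $\mu^{\mathbf{i}s}$, while $\hat\alpha$ fixes each $\lambda(s)$ and acts by $\beta$ on $\pi_\sigma(P^I)$. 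I would therefore Fourier-expand $y = \int_\R \pi_\sigma(b_s)\lambda(s)\,ds$ with $b_s \in (P^I)_\vphi$, and match the coefficients of $\lambda(s)$ in $y\,\hat\alpha(\pi_\sigma(a)) = \pi_\sigma(a)\,y$. Using $\sigma^\vphi_s \circ \beta = \beta \circ \sigma^\vphi_s$ this yields $b_s\,(\beta \circ \sigma^\vphi_s)(a) = a\,b_s$ for all $a \in P^I$, so a nonzero $b_s$ witnesses exactly that $\beta \circ \sigma^\vphi_s$ is \emph{not} properly outer. Hence, if every $\beta \circ \sigma^\vphi_s$ is properly outer, then all $b_s = 0$, so $y = 0$ and $\hat\alpha$ is properly outer.

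\emph{Step 2 (the twisted flips).} It remains to show that $\beta \circ \sigma^\vphi_s$ is properly outer on $P^I$ for all $s$, unless $P$ is of type $\I$ and $\alpha$ moves only finitely many points. As $P^I$ is a factor (\cref{lem.only-type-III1}), proper outerness equals outerness, so it suffices to exclude a unitary $u$ with $\Ad u = \beta \circ \sigma^\vphi_s$. If $\alpha$ moves infinitely many points, I would pick sites $k_j$ with the $\{k_j,\alpha(k_j)\}$ pairwise disjoint and a mean-zero $\sigma^\vphi$-analytic eigenvector $x_0 \in P_{\phi,\mu}$ (any mean-zero $x_0$ if $\phi$ is tracial). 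From $u\,\pi_{k_j}(x_0) = \pi_{\alpha(k_j)}(\sigma^\phi_s(x_0))\,u$ the two far, mean-zero legs force $\|[u,\pi_{k_j}(x_0)]\|_2 \to \sqrt{2}\,\|x_0\|_\phi$, whereas approximating $u$ by $E_{P^\cE}(u)$ and using the KMS relation for the analytic $\pi_{k_j}(x_0)$ gives $\|[u,\pi_{k_j}(x_0)]\|_2 \to 0$; a contradiction. If instead $\alpha$ moves only the finite set $\cF$ and $P$ is not of type $\I$, I would split $P^I = P^\cF \ovt P^{I\setminus\cF}$, so that $\beta \circ \sigma^\vphi_s = \gamma \ot \delta$ with $\gamma \in \Aut(P^\cF)$ and $\delta = \sigma^{\phi^{I\setminus\cF}}_s$. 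For $s$ with $\sigma^\phi_s = \id$ one has $\beta \circ \sigma^\vphi_s = \beta$, properly outer by \cref{remark.shift-outer} since $P$ is not of type $\I$. For $s$ with $\sigma^\phi_s \neq \id$, slicing an intertwiner of $\gamma \ot \delta$ by normal functionals $\omega \in (P^\cF)_\star$ produces intertwiners of $\delta$, so proper outerness of $\delta$ on $P^{I\setminus\cF}$ forces $\gamma \ot \delta$ to be properly outer as well.

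The main obstacle is precisely this last ingredient: that a purely modular automorphism $\sigma^{\phi^S}_s$ of an infinite tensor product $P^S$ is properly outer whenever $\sigma^\phi_s \neq \id$. I would prove it by the same commutator estimate, now at a fixed (untouched) site $k$, where $\sigma^{\phi^S}_s(\pi_k(x_0)) = \mu^{\mathbf{i}s}\pi_k(x_0)$ with $\mu^{\mathbf{i}s} \neq 1$: the lower bound $\|[u,\pi_k(x_0)]\|_2 \to |\mu^{\mathbf{i}s}-1|\,\|x_0\|_\phi > 0$ contradicts the KMS upper bound $\|[u,\pi_k(x_0)]\|_2 \to 0$ obtained from $u \approx E_{P^\cE}(u)$. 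This is exactly where almost periodicity is essential, as it provides the analytic eigenvectors needed for the KMS half of the estimate; and it is consistent with the stated exception, since for a type $\I$ factor and finitely supported $\alpha$ the flip $\beta$ is inner via a centraliser unitary, making $b_0 \neq 0$ and $\hat\alpha$ inner.
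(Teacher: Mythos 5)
Your overall strategy is genuinely different from the paper's, and most of it is sound, but Step 1 contains a real gap. You want to pass from a nonzero $y \in N$ with $y\,\hat\alpha(x) = xy$ to a nonzero $b_s \in P^I$ with $b_s\,(\beta\circ\sigma^\vphi_s)(a) = a\,b_s$ by ``Fourier-expanding $y = \int_\R \pi_\sigma(b_s)\lambda(s)\,ds$ and matching coefficients.'' For a crossed product by the \emph{continuous} group $\R$ this is not a proof: the Fourier decomposition is distributional, a generic element of $(P^I)_\vphi \ovt L\R$ has no well-defined pointwise coefficients $b_s$, and a nonzero $y$ need not produce a nonzero honest intertwiner at any single $s$. (This is exactly the mechanism by which, e.g., a type $\III_0$ core acquires a large relative commutant of $\pi_\sigma(M)$ even though every individual $\sigma_s$, $s\neq 0$, is outer.) The reduction you want is in fact \emph{true} here, but only because \cref{lem.only-type-III1} excludes type $\III_0$: in the nonperiodic case $N$ is a factor, so the intertwiner space $\cI=\{y: y\hat\alpha(x)=xy\}$ satisfies $\cI\cI^\star\subset\cZ(N)=\C$, is one-dimensional and invariant under the dual action $\hat\sigma$, which forces $y=\pi_\sigma(b)\lambda(s_0)$ for a single $s_0$; the tracial case is a plain tensor product $P^I\ovt L\R$; and the periodic case needs a disintegration over $\cZ(N)=L(T\Z)$. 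None of this is in your write-up, and it is the pivot of the whole argument. The paper avoids the issue entirely: it only restricts the relation to $Q\ovt L\R$ with $Q=(P^I)_\vphi$, where $\hat\alpha$ acts as $\beta|_Q\ot\id$ with \emph{no} modular twist, slices by $\omega\in(L\R)_\star$ to get a unitary $w\in Q$ conjugating $\beta|_Q$ to the identity, and then invokes the almost-periodic rigidity lemmas of \cite{vaes-verraedt;classification-type-III-bernoulli-crossed-products} (Lemmas 2.1 and 2.5 there) to upgrade this to $\beta=\Ad w\circ\sigma_s$ on all of $P^I$ and to conclude finiteness of the moved set and type $\I$.

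Your Step 2, by contrast, is essentially correct and is an attractive self-contained substitute for those citations: the Popa-style commutator estimates with mean-zero eigenvectors (lower bound $\sqrt2\,\|x_0\|_\phi$, resp.\ $|\mu^{\mathbf{i}s}-1|\,\|x_0\|_\phi$, against the KMS upper bound coming from $\|y\,\pi_k(x_0)\|_2\leq C\|y\|_2$ for analytic eigenvectors) do work, the slicing over $P^\cF$ in the finitely-supported case is fine, and the exceptional case matches the paper's converse. One small correction there: in your proof that $\sigma^{\phi^S}_s$ is outer you cannot work ``at a fixed site $k$'' --- for fixed $k$ one eventually has $k\in\cE$ and $E_{P^\cE}(u)$ no longer commutes with $\pi_k(x_0)$, so the upper bound does not tend to $0$. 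You must let $k$ run over sites outside the finite set $\cE$ (which is possible precisely because the eigenvector relation holds at every site), exactly as in your infinitely-moved-points case.
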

\begin{proof}
Denote $\vphi = \phi^I$, $N = P^I \rtimes \R$, and assume that $v \in N$ is a nonzero element such that $vx = \hat\alpha(x)v$ for all $x \in N$. Since $L\R$ is fixed under $\hat\alpha$, $v$ commutes with $L\R$ and hence $v \in (P^I)_\vphi \ovt L\R$. Putting $Q = (P^I)_\vphi$, we get in particular that the restricted automorphism $\hat\alpha|_{Q \ovt L\R}$ also satisfies $vx=\hat\alpha|_{Q \ovt L\R}(x)v$ for all $x \in Q \ovt L\R$. Taking an appropriate $\omega \in (L\R)_\star$, we get a nonzero element $w = (1\ot \omega)(v) \in Q$ satisfying $wx=\hat\alpha(x)w$ for all $x \in Q$. Note that $Q$ is a factor by \cite[Lemma 2.4]{vaes-verraedt;classification-type-III-bernoulli-crossed-products}, hence $w$ is a multiple of a unitary, and by normalising, we may assume that $w$ is a unitary. 

Denote by $\Gamma$ the countable subgroup of $\Rdual$ generated by the point spectrum of $\Delta_\phi$, endowed with the discrete topology. Put $G = \hat\Gamma$ and let $\sigma : G \curvearrowright P^I$ denote the extension of $\sigma^\vphi$ to an action of the compact group $G$, as in section 2.2 of \cite{vaes-verraedt;classification-type-III-bernoulli-crossed-products}. By \cite[Lemma 2.1]{vaes-verraedt;classification-type-III-bernoulli-crossed-products}, it follows that $\hat\alpha = \Ad w \circ \sigma_s$ on $P^I$ for some $s \in G$, and by \cite[Lemma 2.5]{vaes-verraedt;classification-type-III-bernoulli-crossed-products}, it follows that the set $\{k \in I \mid \alpha(k)\not=k\}$ is then necessarily finite. 
Moreover, fixing $k \in I$ such that $\alpha(k)\not= k$, we get that $w \pi_k(P) = \pi_{\alpha(k)}(P) w$ as subalgebras of $P^I$. This is only possible if $P$ is a type $\I$ factor.

For the converse, note that if $P$ is a type $\I$ factor and $\alpha : I \to I$ is a permutation such that the set $\{k \in I \mid \alpha(k) \not=k \}$ is finite, then there exists a unitary $u \in P^I$ such that $u\pi_k(x)u^\star = \pi_{\alpha(k)}(x)$ for all $k \in I, x \in P$. Since $\vphi \circ \Ad u= \vphi$, we have $u \in (P^I)_{\vphi^I}$, and in particular it follows that $u\lambda(t)u^\star = \lambda(t)$ for all $t \in \R$. We have shown that $\hat\alpha = \Ad u$ on $P^I \rtimes \R$, thus $\hat\alpha$ is not properly outer.
\end{proof}

We saw earlier that if $(P,\phi)$ is a nontrivial factor equipped with a normal faithful state that is not almost periodic, and $\Lambda$ is a countable infinite group, then $P^\Lambda \rtimes \Lambda$ is a type $\III_1$ factor. The next remark shows that if $\Lambda$ is nonamenable, $P^\Lambda \rtimes \Lambda$ does not admit any almost periodic weight.
Recall that a von Neumann algebra $M$ is full if $\Inn(M) \subset \Aut(M)$ is closed for the topology given by 
$\alpha_n \to \alpha$ if and only if $\forall \psi \in M_\star : \|\psi \circ \alpha_n -\psi \circ \alpha\| \to 0$,
and that if $M$ is full, Connes' $\tau$-invariant of $M$ is given by the weakest topology on $\R$ making $\R \to \Out(P^\Lambda \rtimes \Lambda) : t \mapsto \sigma^{\phi^\Lambda}_t$ continuous.

\begin{remark}\label{rem.no-almost-periodic}
Let $(P,\phi)$ be a nontrivial factor equipped with a normal faithful state that is not almost periodic, and $\Lambda$ a countable infinite nonamenable group. By \cite[Lemma 2.7]{vaes-verraedt;classification-type-III-bernoulli-crossed-products}, $P^\Lambda \rtimes \Lambda$ is a full factor, and Connes' $\tau$-invariant is the weakest topology on $\R$ making $\R \to \Aut(P) : t \mapsto \sigma^\phi_t$ continuous. 
Since $\phi$ is not almost periodic, the completion of this topology cannot be compact. It follows that $P^\Lambda \rtimes \Lambda$ has no almost periodic weights, 
see the proof of \cite[Corollary 5.3]{connes;almost-periodic-states}.
\end{remark}

\section{A technical lemma}

Let $(P,\phi)$ be a factor equipped with a normal faithful state, and let $\Lambda$ be a countable group acting on an infinite set $I$, such that the action $\Lambda \curvearrowright I$ has no invariant mean. Put $\vphi = \phi^I$ and $N = P^I \rtimes_{\sigma_\vphi} \R$. Denote the action $\Lambda \curvearrowright N$, induced by the generalized Bernoulli action of $\Lambda$ on $P^I$, by $\alpha$.

The main result of this section is the following technical lemma, allowing us to locate fixed point subalgebras of $N$ w.r.t.~actions which are outer conjugate to the Bernoulli action $\alpha$. 
As in the proof of \cite[Lemma 3.3]{brothier-vaes;prescribed-fundamental-group}, locating such subalgebras allows us to deduce cocycle conjugacies between two Bernoulli actions. 
The proof of \cref{lem.pointwise-fixed-subalgebra-intertwines-in-R} follows the lines of the proof of \cite[Theorem 4.1]{popa;superrigidity-malleable-actions-spectral-gap}. 

\begin{lemma}\label{lem.pointwise-fixed-subalgebra-intertwines-in-R}
Let $p \in L\R \subset N$ be a projection with $0 < \Tr(p) < \infty$.
Assume that $(V_g)_{g \in \Lambda} \in \mathcal{U}(pNp)$ are unitaries, and that $\Omega : \Lambda \times \Lambda \to \T$ is a map such that $V_g \alpha_g(V_h) = \Omega(g,h) V_{gh}$ for all $g,h \in \Lambda$.  
Assume that $Q \subset pNp$ is a subalgebra such that for all $x \in Q$ and for all $g \in \Lambda$, $V_g \alpha_g(x) V_g^\star = x$, and assume that $q \in Q' \cap pNp$ is a projection such that for all $g \in \Lambda$, $V_g\alpha_g(q)V_g^\star \sim q$ inside $Q' \cap pNp$. Then it follows that $qQ \prec_{pNp} pL\R$.

Put $M = N \cap (L\R)' = (P^I)_\vphi \ovt L\R$.
If moreover holds that $q \in Q' \cap pMp$ and $qL\R \subset qQ \subset qMq$, then $qQ \prec_{pMp} pL\R$.
\end{lemma}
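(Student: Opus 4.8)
The plan is to deduce the second statement from the first, $qQ \prec_{pNp} pL\R$, by pushing the intertwiner into $M = (L\R)' \cap N$ (using \cref{lem.commutant}). Concretely, $qQ \prec_{pNp} pL\R$ provides an integer $n$, a projection $e \in M_n(pL\R)$, a normal $\star$-homomorphism $\theta : qQ \to e M_n(pL\R)e$ and a nonzero partial isometry $v \in M_{1,n}(\C) \ot (pNp)$ with $v = qve$ and $xv = v\theta(x)$ for all $x \in qQ$. First I would record two consequences of the extra hypotheses. Since $q \in Q' \cap pMp$, we have $q \in M$ and $q$ commutes with $L\R$, so $qL\R$ is a genuine subalgebra of $qQ$ and a short computation gives $(qL\R)' \cap qNq = qMq$. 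As $qL\R \subset qQ$, it follows that $vv^\star \in (qQ)' \cap qNq \subset (qL\R)' \cap qNq = qMq \subset M$.

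Next I would apply the trace-preserving conditional expectation $E_M : N \to M$ entrywise to $v$. Because $qQ \subset qMq \subset M$ and $\theta(qQ) \subset M_n(pL\R) \subset M_n(M)$, the $M$-bimodularity of $E_M$ yields $x\, E_M(v) = E_M(v)\, \theta(x)$ for every $x \in qQ$; thus $E_M(v)$, whenever nonzero, already witnesses $qQ \prec_{pMp} pL\R$. So the whole statement reduces to producing an intertwiner $v$ with $E_M(v) \neq 0$. The main obstacle is exactly this nonvanishing: a priori $v$ could be supported entirely in the off-diagonal part $L^2(N \ominus M)$.

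To settle nonvanishing I would analyse $L^2(N)$ as an $L\R$–$L\R$ bimodule. Identifying $L\R$ with $L^\infty$ of its spectrum, $L^2(N)$ becomes a measured bimodule over the product of two copies of this spectrum, and $L^2(M) = L^2((L\R)' \cap N)$ is precisely the diagonal (charge-zero) part; the off-diagonal part is graded by the modular charge, i.e.\ by the spectrum of $\Delta_\phi$ via $\lambda(t) x \lambda(t)^\star = \sigma^\vphi_t(x)$. The bimodule $\cH = \overline{qQ\, v\, pL\R}^{\|\cdot\|_{\Tr}}$ is invariant under $qL\R$ on the left and is finitely generated as a right $pL\R$-module; hence its charge support has finite fibres, which forces the charges occurring in $v$ to be atoms of the modular spectrum, that is, points of the point spectrum $\Gamma$ of $\Delta_\phi$. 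Decomposing $v = \sum_\gamma v_\gamma$ over these charges, each $v_\gamma$ is again an intertwiner (the relation $xv = v\theta(x)$ respects the charge grading, since $x$ and $\theta(x)$ are charge-zero), and some $v_\gamma \neq 0$. If $\gamma = 1$ then $v_1 = E_M(v) \neq 0$ and we are done; this already covers the weakly mixing case, where $\Gamma = \{1\}$. In general, for $\gamma \in \Gamma$ I would untwist: choosing a partial isometry $u_\gamma \in (P^I)_{\vphi,\gamma}$ with support adapted to $v_\gamma$ (such $u_\gamma$ exist because $\gamma$ lies in the point spectrum), one checks that $\Ad u_\gamma$ restricts to an automorphism of $M$ preserving $L\R$ (indeed $u_\gamma \lambda(t) u_\gamma^\star = \gamma^{-\mathbf{i}t}\lambda(t)$), and that $w := u_\gamma^\star v_\gamma$ is charge-zero, hence lies in $M_{1,n}(\C) \ot M$, and satisfies $\Ad(u_\gamma^\star)(x)\, w = w\, \theta(x)$. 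This exhibits $\Ad(u_\gamma^\star)(qQ) \prec_{pMp} pL\R$, and applying the $L\R$-preserving automorphism $\Ad u_\gamma$ of $pMp$ gives $qQ \prec_{pMp} pL\R$.

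I expect the delicate points to be the bimodule/charge-grading analysis that forces the charge support of $\cH$ to be atomic, and the bookkeeping in the untwisting step (matching the supports of $u_\gamma$ and $v_\gamma$, and ensuring $w \neq 0$). Once these are in place, the conclusion $qQ \prec_{pMp} pL\R$ follows as above.
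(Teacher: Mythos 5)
There is a genuine gap, and it is twofold. First, your proposal does not prove the main assertion of the lemma at all: you take $qQ \prec_{pNp} pL\R$ as an input and only address the passage to $qQ \prec_{pMp} pL\R$. But the first statement \emph{is} the lemma's principal content, and the paper devotes essentially the entire proof to it: one needs the malleable deformation $\theta_t$ on $\Ntil = (P\star L\Z)^I \rtimes \R$, the spectral gap estimate of \cref{lem.rho-not-amenable} (which is where the hypothesis that $\Lambda \actson I$ has no invariant mean enters), Popa's transversality inequality to produce a nonzero $\theta_1$-intertwiner $W$, a localization step showing $qQ \prec_{pNp} pN(\cF)p$ for some finite $\cF \subset I$, and finally the translation trick $g\cF \cap \cF = \emptyset$ to land in $pL\R$. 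None of this appears in your proposal, so as written it proves at most half of the statement.

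Second, even granting the first conclusion, your route to the second is not closed. Your charge-grading observation is sound: a right-$pL\R$-finitely generated, left-$qL\R$-invariant subspace of $L^2(N)$ has finite-dimensional, $\Delta_\vphi^{\mathbf{i}t}$-invariant fibres, hence is supported on eigenvalues of $\Delta_\vphi$, so $v = \sum_\gamma v_\gamma$ with each $v_\gamma$ an intertwiner and $v_1 = E_M(v)$. The problem is the untwisting when only some $v_\gamma$ with $\gamma \neq 1$ is nonzero. To get $\Ad(u_\gamma^\star)(x)\,w = w\,\theta(x)$ with $w = u_\gamma^\star v_\gamma$ you need a partial isometry $u_\gamma \in (P^I)_{\vphi,\gamma}$ whose left support dominates that of $v_\gamma$ \emph{and} commutes with $qQ$ (otherwise $x \mapsto u_\gamma^\star x u_\gamma$ is not a homomorphism on $qQ$ and the cross term $u_\gamma^\star x(1-u_\gamma u_\gamma^\star)v_\gamma$ survives). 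The lemma makes no factoriality assumption on $(P^I)_\vphi$, so you cannot freely adjust supports inside the spectral subspace $(P^I)_{\vphi,\gamma}$; moreover $\Ad u_\gamma$ is only an isomorphism between corners, not an ``$L\R$-preserving automorphism of $pMp$'', so the final transport of the intertwining relation is also unjustified. The paper sidesteps all of this by rerunning the deformation argument inside $M$: the extra hypotheses $q \in (L\R)'$ and $qL\R \subset qQ$ force the limit element $W$ to commute with $L\R$, and one repeats the localization and translation steps with $M(\cF) = M \cap N(\cF)$ in place of $N(\cF)$, directly yielding $qQ \prec_{pMp} pL\R$. If you want to keep your shortcut, you would at least need to add an argument handling the supports in the non-factorial case, or restrict to the situations where it is actually applied.
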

See \cref{lem.commutant} for the equality $N \cap (L\R)' = (P^I)_\vphi \ovt L\R$.

We use the spectral gap methods of \cite{popa;superrigidity-malleable-actions-spectral-gap}, applied to the following variant of Popa's malleable deformation \cite{popa;rigidity-non-commutative-bernoulli}, due to Ioana \cite{ioana;rigidity-results-wreath-product}.
Let $\Ptil = P \star L\Z$ the free product with respect to the state $\phi$ and the natural trace on $L\Z$, and denote the induced state also by $\phi$. Denote by $u_n, n \in \Z$ the canonical unitaries of $L\Z$. Let $f : \T \to (-\pi,\pi]$ be the unique function determined by $t = \exp(i f(t)), t \in \T$, and let $h =f(u_1)$ be the hermitian operator such that $u_1 = \exp(i h)$. Define $u_t = \exp(i th)$ for every $t \in \R$. 
Equip $\Ptil^I$ with the one-parameter group of state preserving automorphisms $\theta_t$ given by the infinite tensor product of $\Ad u_t$, for $t \in \R$. Define the period 2 automorphism $\gamma$ of $\Ptil^I$ as the infinite tensor product of the automorphism of $\Ptil$ satisfying $x \mapsto x$ for $x \in P$ and $u_1 \mapsto u_{-1}$.

Put $\Ntil = \Ptil^I \rtimes_{\sigma_\vphi} \R$, and denote also the action $\Lambda \curvearrowright \Ntil$, induced by the generalized Bernoulli action of $\Lambda$ on $P^I$, by $\alpha$. The automorphisms $\theta_t$ and $\gamma$ naturally extend to $\Ntil$, by acting as the identity on $L\R$. 

The condition that $\Lambda \curvearrowright I$ has no invariant mean, implies the following result, which is very similar to \cite[Lemma 3.4]{brothier-vaes;prescribed-fundamental-group}.  
\begin{lemma}\label{lem.rho-not-amenable}
Assume that $(V_g)_{g \in \Lambda}$ are unitaries in $\mathcal{U}(N)$, and $\Omega : \Lambda \times \Lambda \to \T$ is a map such that $V_g \alpha_g(V_h) = \Omega(g,h) V_{gh}$ for all $g,h \in \Lambda$. 
The unitary representation
\begin{align*}
\rho : \Lambda \to \mathcal{U}(L^2(\Ntil \ominus N, \Tr)) : \rho_g(\xi) = V_g \alpha_g(\xi) V_g^\star
\end{align*}
does not weakly contain the trivial representation.
\end{lemma}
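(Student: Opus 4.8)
The plan is to prove that the representation $\rho$ does not weakly contain the trivial representation, which amounts to showing a spectral gap estimate: there exist finitely many group elements $g_1,\dots,g_n\in\Lambda$ and a constant $C>0$ such that for every $\xi\in L^2(\tilde N\ominus N,\Tr)$,
\begin{align*}
\|\xi\| \leq C\sum_{i=1}^n \|\rho_{g_i}(\xi)-\xi\|.
\end{align*}
The key structural input is that $\Lambda\actson I$ has no invariant mean. I would exploit this by analyzing the orthogonal complement $L^2(\tilde N\ominus N)$ through its decomposition coming from the free product structure. Writing $\tilde P = P\star L\Z$, the Hilbert space $L^2(\tilde P^I,\phi^I)$ decomposes according to which tensor positions $k\in I$ carry a vector from $L^2(\tilde P\ominus P)$; the orthogonal complement $L^2(\tilde N\ominus N)$ is spanned (over $L\R$) by words that involve $L^2(\tilde P\ominus P)$ in at least one coordinate. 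The Bernoulli shift $\alpha_g$ permutes these coordinates via the action $\Lambda\actson I$.

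First I would set up the precise decomposition. Since $\tilde N\ominus N = (\tilde P^I\ominus P^I)\ovt L\R$ as $L\R$-bimodules (using that $\alpha$ and the cocycle $V_g\in\mathcal U(N)$ act trivially on $L\R$ and that $V_g$ lies in $(P^I)_\vphi\ovt L\R$ by \cref{lem.commutant}), it suffices to produce the spectral gap on the $P^I$-side. I would decompose $L^2(\tilde P^I\ominus P^I)$ as a direct sum over finite nonempty subsets $\cF\subset I$ of the subspaces $\cH_\cF$ consisting of words with a genuine $L^2(\tilde P\ominus P)$-entry exactly at the positions in $\cF$ (and arbitrary $L^2(P)$-entries elsewhere). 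The action of $\rho_g$ intertwines $\cH_\cF$ with $\cH_{g\cdot\cF}$, twisted by the adjoint action of the unitary $V_g\in(P^I)_\vphi\ovt L\R$, which preserves the overall $\cF$-grading since $V_g$ is built from elements of $P$ (not $\tilde P$).

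The main obstacle, and the heart of the argument, is converting the absence of an invariant mean for $\Lambda\actson I$ into the spectral gap. The representation $\rho$ is, up to the innocuous $\Ad V_g$ twist, a direct sum of representations each of which is a subrepresentation of the permutation action of $\Lambda$ on tensor words, and this action is weakly contained in a sum of quasi-regular representations $\ell^2(\Lambda/\Stab(\cF))$. Because $\Lambda\actson I$ has no invariant mean, no such quasi-regular representation (on the $\cF$-orbits) weakly contains the trivial representation, and one obtains a uniform spectral gap across all $\cF$. Concretely, I would follow \cite[Lemma 3.4]{brothier-vaes;prescribed-fundamental-group}: choose a finite set $F\subset\Lambda$ witnessing the failure of the F\o lner condition on $I$, and show that for any almost-invariant vector $\xi$ the mass of $\xi$ in the single-point coordinates would produce an almost-invariant mean on $I$, a contradiction. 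The twist by $\Ad V_g$ does not affect this estimate because $V_g$ is $L\R$-central and preserves the coordinate grading, so the norms $\|\rho_{g}(\xi)-\xi\|$ control the shift $\xi\mapsto\xi\circ(g^{-1}\cdot)$ up to an error that vanishes on the orthogonal complement of the invariant vectors. This yields that $\rho$ has spectral gap and hence does not weakly contain the trivial representation.
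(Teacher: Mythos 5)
Your overall strategy coincides with the paper's: decompose $L^2(\Ntil\ominus N,\Tr)$ into blocks $K_\cF$ indexed by the finite nonempty subsets $\cF\subset I$ that record where the letters from $\Ptil\ominus P$ sit, observe that $\rho_g$ carries $K_\cF$ to $K_{g\cF}$, and play this off against the absence of an invariant mean for $\Lambda\actson I$. However, the step where you convert almost-invariant vectors for $\rho$ into data for the combinatorial action is not correctly justified. You assert that $\rho$ is weakly contained in a direct sum of quasi-regular representations $\ell^2(\Lambda/\Stab(\cF))$. This is not true in general: the restriction of $\rho$ to an orbit $\bigoplus_{\cF'\in\Lambda\cdot\cF}K_{\cF'}$ is the representation induced from the nontrivial, infinite-dimensional representation of $\Stab(\cF)$ on the block $K_\cF$, not a multiple of the quasi-regular representation, and there is no reason for such an induced representation to be weakly contained in $\ell^2(\Lambda/\Stab(\cF))$. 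What you actually need is only the one-sided implication: if $\xi_n$ are almost $\rho$-invariant unit vectors, then $\eta_n(\cF)\defeq\|p_\cF\xi_n\|$ are almost invariant unit vectors for the permutation representation on $\ell^2(J)$, because Cauchy--Schwarz gives $|\langle\rho_g\xi,\xi\rangle|\le\sum_{\cF}\|p_{g\cF}\xi\|\,\|p_\cF\xi\|$. (The paper reaches the same point differently, by restricting an $\Ad\rho(\Lambda)$-invariant mean on $B(H)$ to the diagonal copy of $\ell^\infty(J)$.)

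Two further points. First, this produces an invariant mean on the set $J$ of finite nonempty subsets of $I$, not on $I$ itself; the passage from $J$ to $I$ is exactly \cite[Lemma 2.6]{vaes-verraedt;classification-type-III-bernoulli-crossed-products}, and your phrase about ``the mass of $\xi$ in the single-point coordinates'' will not do, since an almost-invariant vector may have no component at all in the singleton blocks $K_{\{k\}}$. Second, your claim that $V_g\in(P^I)_\vphi\ovt L\R$ by \cref{lem.commutant} is unjustified -- the $V_g$ are arbitrary unitaries in $N$ and need not commute with $L\R$ -- but it is also unnecessary: each $K_\cF$ is invariant under left and right multiplication by $N$ (the modular group of the free product state preserves $P$ and $\Ptil\ominus P$), so $\Ad V_g$ preserves every block for any $V_g\in\mathcal{U}(N)$, which is all that the equivariance $\rho_g(K_\cF)=K_{g\cF}$ requires.
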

\begin{proof}
Denote by $\tilde\vphi$ be the dual weight on $\Ntil$ and $N$ w.r.t.~the weight $\vphi$ on $\Ptil^I$ and $P^I$ respectively. 
Define for every finite subset $\cF \subset I$, the subspace $K_\cF \subset L^2(\tilde N,\tilde\vphi)$ as the $\|\fdot\|_2$-closure of the linear span of $P^I(\Ptil \ominus P)^\cF P^I \fK(\R)$, where $\fK(\R)$ denotes the set of compactly supported continuous functions on $\R$, and $\fK(\R)\subset L\R$ as convolution operators. 
Note that if $x \in P^I(\Ptil \ominus P)^\cF P^I \fK(\R), y \in P^I(\Ptil \ominus P)^\cK P^I \fK(\R)$ for $\cF\not=\cK$, then $E_{L\R}(y^\star x)=0$. In particular, we have the orthogonal decomposition
\begin{equation*}
 L^2(\Ntil \ominus N, \Tr) = \bigoplus_{\cF \in J} K_{\cF}.
\end{equation*}
Remark
that $\rho_g(K_\cF) = K_{g\cF}$ for all $g \in \Lambda$. Denote by $p_\cF$ the orthogonal projection onto $K_\cF$. 

Assume that $\rho$ does weakly contain the trivial representation, then we find an $\Ad \rho(\Lambda)$-invariant mean $\mu$ on $B(H)$.  
Define the map $\Theta : \ell^\infty(J) \to B(H)$ given by $\Theta(F) = \sum_{\cF \in J} F(\cF)p_\cF$. Since $\Theta(g \cdot F) = \rho_g \Theta(F) \rho_g^\star$, the composition $\mu \circ \Theta$ yields a $\Lambda$-invariant mean on $J$. But then $\Lambda \curvearrowright I$ admits an invariant mean, by \cite[Lemma 2.6]{vaes-verraedt;classification-type-III-bernoulli-crossed-products}. This is absurd, hence $\rho$ does not weakly contain the trivial representation.
\end{proof}

\begin{proof}[Proof of \Cref{lem.pointwise-fixed-subalgebra-intertwines-in-R}]
Let $Q \subset pNp$ a subalgebra such that for all $x \in Q$ and for all $g \in \Lambda$, $V_g \alpha_g(x)V_g^\star = x$, and let $q \in Q' \cap pNp$ be a projection with $V_g \alpha_g(q) V_g^\star \sim_{(Q' \cap pNp)} q$  for all $g \in \Lambda$.

Define as above the malleable deformation $(\theta_t)_{t \in \R}$ of $\Ntil$, and remark that $\theta_t(p) = p$ for all $t \in \R$. Let $\rho$ be the unitary representation of $\Lambda$ on $L^2(p(\Ntil \ominus N)p, \Tr)$ given by $\rho_g(\xi) = V_g \alpha_g(\xi) V_g^\star$. By \cref{lem.rho-not-amenable}, we find a constant $\kappa > 0$ and a finite subset $\cF \subset \Lambda$ such that
\begin{equation}\label{eq.use-nonamenable}
 \|\xi\|_2 \leq \kappa \sum_{g \in \cF} \|\rho_g(\xi)-\xi\|_2 \quad\text{for all }\xi \in L^2(p(\Ntil \ominus N)p, \Tr).
\end{equation}
Put $\epsilon = \|q\|_2/6$ and $\delta = \epsilon/(2 \kappa |\cF|)$. Take an integer $n_0$ large enough such that $t = 2^{-n_0}$ satisfies
\begin{align*}
\| (V_g - \theta_t(V_g))p\|_2 &\leq \delta \quad\text{for all }g \in \cF,\\
\| q - \theta_t(q)\|_2 &\leq \epsilon.
\end{align*}
For every $x \in Q$ we have $V_g \alpha_g(x) V_g^\star = x$ for all $g \in \Lambda$, and therefore
\begin{equation}\label{eq.uniform-bound}
 \|V_g \alpha_g(\theta_t(x)) V_g^\star - \theta_t(x)\|_2 \leq 2\delta \quad\text{for all }g \in \cF\text{ and all }x \in Q\text{ with }\|x\|\leq 1.
\end{equation}
Denote by $E : \Ntil \to N$ the unique trace preserving conditional expectation. Whenever $x \in Q$ with $\|x\| \leq 1$, we put $\xi = \theta_t(x) - E(\theta_t(x))$ and conclude from \eqref{eq.use-nonamenable} and \eqref{eq.uniform-bound} that
\begin{align*}
\|\xi\|_2 \leq \kappa \sum_{g \in \cF} \| \rho_g(\xi)-\xi\|_2 \leq 2\kappa |\cF| \delta = \epsilon.
\end{align*}
A direct computation shows that $(\theta_t)$ satisfies the following transversality property in the sense of \cite[Lemma 2.1]{popa;superrigidity-malleable-actions-spectral-gap}:
\begin{align*}
\| x- \theta_t(x)\|_2 \leq \sqrt{2} \|\theta_t(x) - E(\theta_t(x))\|_2 \quad\text{for all } x \in pNp.
\end{align*}
We conclude that $\|x - \theta_t(x)\|_2 \leq 2\epsilon$ for all $x \in Q$ with $\|x\|\leq 1$, hence $\|y-\theta_t(y)\|_2 \leq 3\epsilon$ for all $y \in qQ$. It follows that for all $y \in \mathcal{U}(qQ)$,
\begin{align*}
  | \Tr( y \theta_t(y^\star)) - \Tr(yy^\star) | \leq \|y\|_2 \| y - \theta_t(y)\|_2 \leq \|q\|_2 3\epsilon = \Tr(q)/2,
\end{align*}
and hence $\Tr(y \theta_t(y^\star)) \geq \Tr(q)/2$ for all $y \in \mathcal{U}(qQ)$.

Let $W \in q\Ntil \theta_t(q)$ be the unique element of minimal $\|\fdot\|_2$ in the weakly closed convex hull of $\{y \theta_t(y^\star) \mid y \in \mathcal{U}(qQ)\}$. Then $\Tr(W) \geq \Tr(q)/2$ and $xW = W \theta_t(x)$ for all $x \in Q$. In particular, $WW^\star$ commutes with $Q$.

Put now for $g \in \Lambda$, $W_g = V_g \alpha_g(W) \theta_t(V_g^\star)$. Since $Q$ is pointwise fixed under $\Ad V_g \circ \alpha_g$, we then also have $x W_g = W_g \theta_t(x)$ for all $x \in Q$. The join of the left support projections of all $W_g$, $g \in \Lambda$, is a projection $q_0 \in p \Ntil p \cap Q'$ that satisfies $q_0 = V_g \alpha_g(q_0) V_g^\star$ for all $g \in \Lambda$. By \cref{lem.rho-not-amenable}, $q_0 \in N$, and in particular, $\gamma(q_0) = q_0$. We claim that we can find some $g \in \Lambda$ such that $\theta_t(\gamma(W^\star) W_g) \not= 0$. 
To prove the claim, assume that for all $g \in \Lambda$, $\gamma(W^\star)W_g = 0$. Since the join of the left support projections of all $W_g$, $g \in \Lambda$ equals $q_0$, we get $\gamma(W^\star)q_0 = \gamma(W^\star q_0) = 0$, contradiction.

Fix now $g \in \Lambda$ such that $\gamma(W^\star)W_g \not= 0$. 
Take $u \in \mathcal{U}(Q' \cap pNp)$ such that $uq=V_g \alpha_g(q)V_g^\star u$, and put $W' \defeq \theta_t(\gamma(W^\star) W_g) \theta_{2t}(u)$.
Then $W' \in q\Ntil \theta_{2t}(q)$ is a nonzero element satisfying $x W' = W' \theta_{2t}(x)$ for all $x \in Q$. Repeating the same argument $n_0$ times, we obtain a nonzero element $W \in q \Ntil \theta_1(q)$ such that $WW^\star \in Q' \cap q \Ntil q$ and
\begin{equation}\label{eq:W}
 x W = W \theta_1(x) \quad\text{for all }x \in qQ.
\end{equation}
Observe that if $q \in (L\R)'$ and $qL\R \subset qQ$, then $W$ commutes with $L\R$.

For every $\cF \subset I$, define $N(\cF) = P^\cF \rtimes_{\sigma_\vphi} \R$. We claim that there exists a finite subset $\cF \subset I$ such that $qQ \prec_{pNp} pN(\cF)p$. Suppose the claim fails, and let $x_n \in \mathcal{U}(qQ)$ be a sequence of unitaries satisfying
\begin{align*}
\| E_{pN(\cF)p}(ax_nb^\star)\|_2 \to 0 \quad\text{for all }a,b\in pNq \text{ and all finite subsets }\cF \subset I.
\end{align*}
We claim that 
\begin{equation}\label{eq.theta1ofQ-weakly-embeds}
\|E_{pNp}(a \theta_1(x_n) b^\star)\|_2 \to 0 \quad \text{for all }a,b \in p \Ntil p. 
\end{equation}
Since the linear span of all $p N \Ptil^\cF p$, $\cF \subset I$ finite, is $\|\fdot\|_2$-dense in $p\Ntil p$, it suffices to prove \eqref{eq.theta1ofQ-weakly-embeds} for all $a,b \in pN\Ptil^\cF p$ and all finite subsets $\cF \subset I$. But for all $a,b \in pN\Ptil^\cF p$, we have
\begin{align}\label{eq:hulp}
 E_{pNp}(a \theta_1(x_n) b^\star) = E_{pNp}\big(a \theta_1(E_{pN(\cF)p}(x_n)) b^\star\big),
\end{align}
thus the conclusion follows from the choice of the sequence $(x_n)$. So \eqref{eq.theta1ofQ-weakly-embeds} is proven, and since $x_n \in \mathcal{U}(qQ)$ are unitaries, it follows in particular that 
\begin{align*}
\|E_{pNp}(WW^\star)\|_2 = \|E_{pNp}(WW^\star)q\|_2 = \|E_{pNp}(WW^\star) x_n\|_2 = \| E_{pNp}(W \theta_1(x_n) W^\star)\|_2 \to 0.
\end{align*}
As $W$ is nonzero, we reached a contradiction, and we have proven the existence of a finite subset $\cF \subset I$ such that $qQ \prec_{pNp} pN(\cF)p$.

Since $qQ \prec_{pNp} pN(\cF)p$, we find $n \in \N$, a projection $p' \in pN(\cF)p \otimes M_n(\C)$, a unital normal homomorphism 
$\psi_1 : qQ \to p' \big(pN(\cF)p \otimes M_n(\C)\big) p'$ and a partial isometry $v \in qNp \otimes M_{1,n}(\C)$ such that $v^\star v \leq p'$ and $x v = v\psi_1(x)$ for all $x \in qQ$. 
For every $g \in \Lambda$, choose $u_g \in \mathcal{U}(Q' \cap pNp)$ such that $u_g V_g \alpha_g(q)V_g^\star = q u_g$. Put $\mathcal{G} \subset \mathcal{U}(Q'\cap pNp)$ the subgroup of all unitaries $u \in Q' \cap pNp$ satisfying $uq = qu$.
Let $\cK \subset \Lambda$ and $\mathcal{V} \subset \mathcal{G}$ be finite sets such that 
\begin{align*}
\Tr\left(\bigvee_{g \in \cK, u \in \mathcal{V}} u u_g V_g \alpha_g(vv^\star)V_g^\star u_g^\star u^\star\right) > \frac12 \Tr\left(\bigvee_{g \in \Lambda, u \in \mathcal{G}} uu_g V_g \alpha_g(vv^\star)V_g^\star u_g^\star u^\star\right).
\end{align*}
Remark that for all $g \in \cK$, all $u \in \mathcal{V}$, and all $x \in qQ$, we have that
\begin{align*}
x u u_g V_g \alpha_g(v) = u u_g V_g \alpha_g(v)\, \alpha_g(\psi_1(x)).
\end{align*}
Replacing $\cF$ by $\cK\cdot\cF$, $\psi_1$ by the direct sum of the maps $\alpha_g\circ \psi_1$ for $g \in \cK$, $u \in \mathcal{V}$ and $v$ by the polar part of the direct sum of the $uu_g V_g \alpha_g(v)$'s, we obtain a unital normal $\star$-homomorphism 
$\psi_1 : qQ \to p' \big(pN(\cF)p \otimes M_n(\C)\big) p'$ and a partial isometry $v \in qNp \otimes M_{1,n}(\C)$ such that $v^\star v \leq p'$,
\begin{align}\label{eq.action-does-not-make-v-zero}
u_g V_g \alpha_g(vv^\star) V_g^\star u_g^\star \not\perp vv^\star \text{ for all }g \in G,
 \quad\text{and}\quad x v = v\psi_1(x) \text{ for all }x \in qQ. 
\end{align}
Moreover, we may assume that the support projection of $E_{p N(\cF) p \otimes M_n(\C)}(v^\star v)$ equals $p'$.

Since the action $\Lambda \curvearrowright I$ has no invariant mean, a fortiori all orbits are infinite and thus there exists some $g \in \Lambda$ such that $g \cF \cap \cF = \emptyset$ (see e.g. \cite[Lemma 2.4]{popa-vaes;strong-rigidity-generalized-bernoulli-actions}  ).
Putting $\psi_2 : qQ \to \alpha_g(p')\big(p N(g\cF)p \otimes M_n(\C)\big)\alpha_g(p')$ given by $\psi_2 = \alpha_g \circ \psi_1$, we get that
\begin{equation}\label{eq.intertwines}
 \psi_1(x)\: v^\star u_gV_g \alpha_g(v) = v^\star u_gV_g \alpha_g(v)\: \psi_2(x) \quad\text{for all }x\in qQ.
\end{equation}
By the first part of \eqref{eq.action-does-not-make-v-zero}, $v^\star u_gV_g \alpha_g(v)\not=0$.

We claim that $\psi_1(qQ) \prec_{pNp \otimes M_n(\C)} pN(\emptyset)p \otimes M_n(\C) = pL\R \otimes M_n(\C)$. Suppose that this is not the case, then we find a sequence $x_n \in qQ$ such that $\psi_1(x_n) \in \mathcal{U}(\psi_1(qQ))$ and  
\begin{align*}
 \|E_{L\R \otimes M_n(\C)}(a \psi_1(x_n) b)\|_2 \to 0 \quad\text{for all }a,b \in pNp \otimes M_n(\C).
\end{align*}
Note that for all $a,b \in pN(\cF)p \otimes M_n(\C)$ and all $a',b' \in pN(g\cF)p \otimes M_n(\C)$,
\begin{align*}
 E_{pN(g \cF)p \otimes M_n(\C)}(a' a \psi_1(x_n) b b') = a' E_{L\R \otimes M_n(\C)}\big(a \psi_1(x_n) b\big) b',
\end{align*}
and by density and \eqref{eq.intertwines}, it follows that $\|E_{p N(g\cF)p \otimes M_n(\C)}(y \psi_1(x_n)z)\|_2\to 0$ for all $y,z \in pNp \otimes M_n(\C)$.
Putting $w = v^\star u_gV_g \alpha_g(v)$, we have that 
\begin{align*}
0 \leftarrow \|E_{pN(g \cF)p \otimes M_n(\C)}( w^\star  \psi_1(x_n)  w) \|_2
    &= \|E_{pN(g \cF)p \otimes M_n(\C)}( \psi_2(x_n)\, w^\star  w) \|_2\\
    &= \|\psi_2(x_n)\,E_{pN(g \cF)p \otimes M_n(\C)}( w^\star  w) \|_2\\
    &\geq \|w\psi_2(x_n)\,E_{pN(g \cF)p \otimes M_n(\C)}( w^\star  w) \|_2\displaybreak[0]\\
    &= \|\psi_1(x_n)w\,E_{pN(g \cF)p \otimes M_n(\C)}( w^\star  w) \|_2\\
    &= \|w\,E_{pN(g \cF)p \otimes M_n(\C)}( w^\star  w) \|_2\\
    &\geq \|w^\star w \,E_{pN(g \cF)p \otimes M_n(\C)}(w^\star w )\|_2.
\end{align*}
But this is absurd, as $w^\star w$ is nonzero. 
We conclude that $\psi_1(qQ) \prec pL\R \otimes M_n(\C)$ inside $pNp \otimes M_n(\C)$. Since the support of $E_{p N(\cF) p \otimes M_n(\C)}(v^\star v)$ equals $p'$, this intertwining can be combined with the intertwining given by $v$, and we obtain that $qQ \prec_{pNp} pL\R$.

Assume now that $q \in Q' \cap (L\R)' \cap N$ and $qL\R \subset qQ \subset qMq$, where $M = N \cap (L\R)' = (P^I)_\vphi \ovt L\R$.
By \eqref{eq:W}, 
we obtain a nonzero element $W \in q \Ntil \theta_1(q) \cap (L\R)'$
such that  $x W = W \theta_1(x)$ for all $x \in qQ$.

For every $\cF \subset I$, define $M(\cF) = M \cap N(\cF) = (P^\cF)_\vphi \ovt L\R$. We claim that there exists a finite subset $\cF \subset I$ such that $qQ \prec_{pMp} pM(\cF)p$. If the claim fails, then there exists a sequence of unitaries $x_n \in \mathcal{U}(qQ)$ such that 
\begin{align*}
\| E_{pM(\cF)p}(ax_nb^\star)\|_2 \to 0 \quad\text{for all }a,b\in pMq \text{ and all finite subsets }\cF \subset I.
\end{align*}
By \eqref{eq:hulp} and using that $E_{pM(\cF)p}(x_n) = E_{pN(\cF)p}(x_n)$, it follows that $\|E_{pNp}(a \theta_1(x_n) b^\star)\|_2 \to 0$ for all $a,b \in p \Ntil p \cap (pL\R)'$,
and thus $\|E_{pNp}(WW^\star)\|_2 \to 0$, contradiction.
Hence the claim is proven.

Replacing $N$ by $M$ and $N(\cF)$ by $M(\cF)$ in the last part of the proof of the general case, we then obtain that $qQ \prec_{pMp} pL\R$. We need $qL\R \subset qQ$ to establish that $uu_gV_g \alpha_g(q) \in (L\R)'$ for all $u \in \mathcal{G}, g \in \Lambda$.
\end{proof}

Using \cref{lem.pointwise-fixed-subalgebra-intertwines-in-R}, we obtain the following result. 
Note that this result follows immediately from \cite[Theorem A.1]{popa;class-factors-betti-number-invariants} if both centralizers $(P_i^{I_i})_{\phi_i^{I_i}}$ are trivial, since then $L\R \subset P_i^{I_i} \rtimes \R$ is a maximal abelian subalgebra.
\begin{lemma}
\label{lem.pointwise-fixed-subalgebra-intertwines-in-R-conclusion}
Let $(P_0,\phi_0)$ and $(P_1,\phi_1)$ be nontrivial factors equipped with normal faithful nonperiodic states. Let $\Lambda$ be a countable group acting on infinite sets $I_0$, $I_1$, such that the actions $\Lambda \curvearrowright I_i$ do not admit invariant means. Denote by $\vphi_i = \phi_i^{I_i}$ the canonical state on $P_i^{I_i}$. 

Assume that the centralizers $(P_i^{I_i})_{\vphi_i}$ are factors.
Assume that $\psi : P_0^{I_0} \rtimes_{\sigma^{\vphi_0}} \R \to P_1^{I_1} \rtimes_{\sigma^{\vphi_1}}\R$ is an isomorphism such that the induced actions $\Lambda \curvearrowright P_i^{I_i} \rtimes_{\sigma^{\vphi_i}} \R$ are cocycle conjugate through $\psi$. Then there exists a partial isometry $w \in P_1^{I_1} \rtimes \R$ such that $w w^\star \in \psi(L\R)'$, $w^\star w \in (L\R)'$ and $\psi(L\R)w = wL\R$.
\end{lemma}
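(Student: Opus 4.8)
The plan is to produce $w$ from two mutual intertwinings, $\psi(L\R)\prec_{N_1}L\R$ and $L\R\prec_{N_1}\psi(L\R)$, inside $N_1=P_1^{I_1}\rtimes\R$, which is a type $\II_\infty$ factor because $\phi_1$ is nonperiodic (\cref{lem.only-type-III1}). Since the induced actions $\alpha^{(0)},\alpha^{(1)}$ are cocycle conjugate through $\psi$, I would first fix a $1$-cocycle $(u_g)_{g\in\Lambda}\subset\mathcal{U}(N_1)$ with $\psi\circ\alpha_g^{(0)}=\Ad u_g\circ\alpha_g^{(1)}\circ\psi$. As $L\R\subset N_i$ is pointwise fixed by the Bernoulli action (it acts trivially on the $\lambda(t)$), the subalgebra $\psi(L\R)\subset N_1$ is pointwise fixed by the twisted action $\Ad u_g\circ\alpha_g^{(1)}$, i.e.\ $u_g\alpha_g^{(1)}(x)u_g^\star=x$ for all $x\in\psi(L\R)$.

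For the first intertwining I would cut $L\R$ by a spectral projection $p$ of the operator $h$ with $\lambda(t)=h^{\mathbf{i}t}$ of finite trace, compress $(u_g)$ to a projective cocycle $V_g\in\mathcal{U}(pN_1p)$ for $\alpha^{(1)}$ in the corner, and apply \cref{lem.pointwise-fixed-subalgebra-intertwines-in-R} with $Q$ the corner of $\psi(L\R)$ fixed under $\Ad V_g\circ\alpha_g^{(1)}$ and $q$ chosen so that $V_g\alpha_g^{(1)}(q)V_g^\star\sim q$ in $Q'\cap pN_1p$; this equivalence holds because the twisted action preserves the trace. The lemma then yields $\psi(L\R)\prec_{N_1}L\R$. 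For the reverse intertwining I would run the same argument in $N_0$: the identity $\psi^{-1}(u_g^\star)\,\alpha_g^{(0)}(\psi^{-1}(z))\,\psi^{-1}(u_g)=\psi^{-1}(z)$ for $z\in L\R\subset N_1$ shows that $\psi^{-1}(L\R)$ is pointwise fixed by $\Ad\psi^{-1}(u_g^\star)\circ\alpha_g^{(0)}$, so \cref{lem.pointwise-fixed-subalgebra-intertwines-in-R} gives $\psi^{-1}(L\R)\prec_{N_0}L\R$, hence $L\R\prec_{N_1}\psi(L\R)$ after applying $\psi$.

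Here the hypothesis that the centralizers are factors enters decisively. By \cref{lem.commutant}, $(L\R)'\cap N_1=M_1\defeq(P_1^{I_1})_{\vphi_1}\ovt L\R$, and since $(P_1^{I_1})_{\vphi_1}$ is a factor, $\cZ(M_1)=L\R$, so $((L\R)'\cap N_1)'\cap N_1=M_1'\cap N_1=\cZ(M_1)=L\R$; transporting through $\psi$ gives the analogous statement for $\psi(L\R)$. Thus both $L\R$ and $\psi(L\R)$ are abelian subalgebras equal to their double relative commutants. With the two-sided intertwining in hand, I would apply \cref{lemma:intertwining-abelian-subalg} (whose proof carries over to the semifinite factor $N_1$ after reduction to a finite corner, the comparison theory being available) to $\psi(L\R)\prec_{N_1}L\R$, obtaining a nonzero partial isometry $w$ with $ww^\star\in\psi(L\R)'\cap N_1$, $w^\star w\in(L\R)'\cap N_1$ and $\psi(L\R)w\subset wL\R$; the reverse intertwining $L\R\prec_{N_1}\psi(L\R)$ is then used, through a maximality argument on the supports, to upgrade this inclusion to the desired equality $\psi(L\R)w=wL\R$.

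The main obstacle is the bookkeeping forced by the type $\II_\infty$ setting: compressing the global cocycle $(u_g)$ to a genuine (projective) cocycle in a finite corner $pN_1p$ while keeping $\psi(L\R)$ pointwise fixed, and verifying the equivalence $V_g\alpha_g^{(1)}(q)V_g^\star\sim q$ needed to invoke \cref{lem.pointwise-fixed-subalgebra-intertwines-in-R}. A secondary subtlety is that no single finite-trace projection lies simultaneously in $L\R$ and in $\psi(L\R)$, so the concluding application of \cref{lemma:intertwining-abelian-subalg} cannot be reduced to a common $\II_1$ corner and must instead be performed in the semifinite factor directly, tracking traces throughout.
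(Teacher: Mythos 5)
Your first half is essentially the paper's Step~1: reduce to a finite corner $qN_1q$ with $q\in L\R$ of finite trace (after a unitary conjugacy making $\psi(q)$ land in the corner --- note the paper \emph{does} work in a common $\II_1$ corner, by first composing $\psi$ with a dual automorphism $\hat\sigma^{\vphi_0}_\mu$ to make it trace preserving and then conjugating $\psi(q)$ onto $q$; your ``secondary subtlety'' is not an obstruction), apply \cref{lem.pointwise-fixed-subalgebra-intertwines-in-R} with $Q=\psi_u(qL\R)$ and $q$ equal to the unit of the corner (so the equivalence hypothesis is vacuous --- your appeal to trace preservation is not needed there), and then use \cref{lemma:intertwining-abelian-subalg} together with the fact that $((L\R)'\cap N_i)'\cap N_i=L\R$ (factoriality of the centralizers) to produce $w$ with $\psi(L\R)w\subset wL\R$. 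Up to that point your outline is sound.

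The gap is in the last step. ``The reverse intertwining $L\R\prec_{N_1}\psi(L\R)$ is then used, through a maximality argument on the supports, to upgrade this inclusion to the desired equality'' is not an argument, and I do not see how to make it one: $L\R$ is not maximal abelian in $N_1$ (its relative commutant is $(P_1^{I_1})_{\vphi_1}\ovt L\R$), so mutual intertwining of the two abelian subalgebras only produces two one-sided inclusions $\psi(L\R)w\subset wL\R$ and $L\R v\subset v\psi(L\R)$ whose composition (if nonzero at all --- the supports need not match) yields an inclusion $\psi(L\R)wv\subset wv\,\psi(L\R)$, which for a non-masa abelian algebra need not be an equality. The paper's mechanism is different and you have no substitute for it: having $\theta=\Ad w^\star\circ\psi$ with $\theta(p_0L\R)\subset p_1L\R$, one takes relative commutants to get the sandwich $p_0L\R\subset\theta^{-1}(p_1L\R)\subset p_0\bigl((P_0^{I_0})_{\vphi_0}\ovt qL\R\bigr)p_0$, and then invokes the second (``moreover'') clause of \cref{lem.pointwise-fixed-subalgebra-intertwines-in-R} --- which requires exactly the hypothesis $qL\R\subset qQ\subset qMq$ with $M=N\cap(L\R)'$ and which you never use --- to intertwine $\theta^{-1}(p_1L\R)$ into $L\R$ \emph{inside} $(P_0^{I_0})_{\vphi_0}\ovt qL\R$. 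The resulting intertwiner $v$ then automatically commutes with $L\R$, so $v^\star vL\R=v^\star(p_0L\R)v\subset v^\star\theta^{-1}(p_1L\R)v\subset v^\star vL\R$ collapses to an equality, and cutting $w$ by $q_0=\theta(vv^\star)$ gives $\psi(L\R)w_0=w_0L\R$. (This second application is also where the equivalence hypothesis $V_g\alpha_g(p_1)V_g^\star\sim p_1$ in the relative commutant genuinely matters; it is verified via the central trace of $(P_1^{I_1})_{\vphi_1}\ovt qL\R$, using factoriality of the centralizer, not merely by trace preservation.)
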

\begin{proof}
Let $\psi$ be an isomorphism as in the statement.
Put $N_i = P_i^{I_i}\rtimes \R$ and denote the action of $\Lambda$ on $N_i$ by $\alpha^i$. 
Identify $\Rdual$ as the dual of $\R$ using the pairing $\inp{t}{\mu} = \mu^{\mathbf{i}{t}}$ for $t\in\R,\mu\in\Rdual$, and denote by $\hat\sigma^{\vphi_i} : \R^+_0 \curvearrowright N_i$ the dual action w.r.t.~$\sigma^{\vphi_i}$. Note that $\hat\sigma^{\vphi_0}_\mu(L\R) = L\R$ for all $\mu \in \Rdual$ and that $\hat\sigma^{\vphi_0}$ commutes with the action of $\Lambda$ on $N_0$, hence we may replace $\psi$ by $\psi \circ \hat\sigma^{\vphi_0}_\mu$ for an appropiate $\mu \in\Rdual$ and assume that $\psi$ is a trace preserving isomorphism implementing the cocycle conjugation. This will allow us to ease the notations.

{\bf Step 1.} 
First, we prove that for every nonzero finite trace projection $q \in L\R$, there exists a nonzero partial isometry $w \in \psi(q) N_1 q$ such that $ww^\star \in \psi(qL\R)'$, $w^\star w \in (qL\R)'$ and $\psi(L\R)w \subset wL\R$. 

Note that $N_1$ is a factor by \cref{lem.only-type-III1}, since $\phi_1$ is nonperiodic.
Let $q \in L\R$ be any nonzero finite trace projection, and take $u \in \mathcal{U}(N_1)$ such that $u \psi(q) u^\star = q$. Then $\Ad u \circ \psi$ also defines a cocycle conjugation between the actions $\Lambda \curvearrowright qN_iq$, hence we find a 1-cocycle $V_g, g \in \Lambda$ for $\alpha^1$ such that 
\begin{align*}
(\Ad u \circ \psi) \circ \alpha^0_g = \Ad V_g \circ \alpha^1_g \circ (\Ad u \circ \psi), \quad\text{for all }g\in \Lambda.
\end{align*}
Denote $\psi_u = \Ad u \circ \psi$. By construction, $\psi_u(q L\R)$ is an abelian subalgebra of $q N_1 q$, such that for all $x \in \psi_u(q L\R)$ and for all $g \in \Lambda$, we have that $V_g \alpha_g^1(x)V_g^\star = x$.
By \cref{lem.pointwise-fixed-subalgebra-intertwines-in-R-conclusion}, $\psi_u(q L\R) \prec_{qN_1q} qL\R$. 
  
Note that the relative commutant of $q L\R$ inside $q N_i q$ is given by $(qL\R)' \cap qN_iq = q((L\R)' \cap N_i)q = (P_i^{I_i})_{\vphi_i} \ovt q L\R$, see \cref{lem.commutant}.
Put $Q_i =  (P_i^{I_i})_{\vphi_i} \ovt q L\R$, and observe that $Q_i' \cap qN_iq= q L\R$ by the assumption that $(P_i^{I_i})_{\vphi_i}$ is a factor.
Since $\psi_u(q L\R) \prec_{qN_1q} qL\R$, 
it follows from \cref{lemma:intertwining-abelian-subalg} that there exists a partial isometry $w_0 \in qN_1q$ such that $w_0w_0^\star \in \psi_u(qL\R)'$, $w_0^\star w_0 \in (qL\R)'$ and $\psi_u(qL\R) w_0 \subset w_0 (qL\R)$. Then $w = u^\star w_0$ is the desired partial isometry.
      
{\bf Step 2.} 
Fix now a nonzero finite trace projection $q \in L\R$, and let $w \in \psi(q)N_1q$ be a nonzero partial isometry such that $ww^\star \in \psi(qL\R)'$, $w^\star w \in (qL\R)'$ and $\psi(L\R)w \subset wL\R$.
Put $p_0 = \psi^{-1}(ww^\star)$, $p_1 = w^\star w$, and put $M_i = p_i(P_i^{I_i} \rtimes \R) p_i$. Denote by $\theta : M_0 \to M_1$ the isomorphism given by $\theta = \Ad w^\star \circ \psi$, and note that $\theta(p_0 L\R) \subset p_1 L\R$, hence, taking the relative commutants, 
\begin{align}\label{eq:inclusions}
p_0 L\R \subset \theta^{-1}(p_1L\R) \subset p_0( (P_0^{I_0})_{\vphi_0} \ovt qL\R)p_0.
\end{align}
Let $u \in \mathcal{U}(\psi(q)N_1q)$ be a unitary such that $u p_1 = w$, and put $Q = (\Ad u^\star \circ \psi)^{-1}(qL\R)$.
Since $\Ad u^\star \circ \psi$ is a cocycle conjugation between the actions $\Lambda \curvearrowright qN_iq$, we find a 1-cocycle $V_g, g \in \Lambda$ for $\alpha^0$ such that 
\begin{align*}
\alpha_g^1 \circ (\Ad u^\star \circ \psi) = (\Ad u^\star \circ \psi) \circ \Ad V_g \circ \alpha_g^0. 
\end{align*}
In particular, for all $x \in Q$ we have 
we have $x = V_g \alpha^0_g(x)V_g^\star$. Furthermore, \eqref{eq:inclusions} means that $p_0 L\R \subset p_0 Q \subset p_0 (qN_0q \cap (L\R)') p_0$.
Also note that for every $g \in \Lambda$, the projections $p_1$ and $\alpha_g^0(p_1)$ both belong to $(P_1)_{\vphi_1} \ovt qL\R$, and the central traces of these projections inside $(P_1)_{\vphi_1} \ovt qL\R$ coincide. Indeed, $(P_1)_{\vphi_1}$ is a factor and the central trace is therefore given by the conditional expectation $E : (P_1)_{\vphi_1} \ovt qL\R \to qL\R$, which satisfies $E = E \circ \alpha_g^0$. In particular, the projections $p_1$ and $\alpha_g^0$ are equivalent, and so are $p_0$, $V_g\alpha_g(p_0)V_g^\star$ inside $Q'\cap qN_0q$.

By \cref{lem.pointwise-fixed-subalgebra-intertwines-in-R}, we deduce that $\theta^{-1}(p_1L\R) \prec_{(P_0)_{\vphi_0} \ovt qL\R} q L\R$. Since $\theta^{-1}(p_1L\R)$ and $qL\R$ are abelian, we then find a partial isometry $v \in (P_0)_{\vphi_0} \ovt qL\R$ with $vv^\star \leq p_0$, such that $\theta^{-1}(p_1L\R)v \subset vL\R$. But note that $v$ commutes with $L\R$, hence 
\begin{align*}
v^\star v L\R = v^\star (p_0L\R)v \subset v^\star \theta^{-1}(p_1L\R) v \subset v^\star v L\R.
\end{align*}
Putting $q_0 = \theta(vv^\star) \leq p_1$, it follows that $q_0 \in \theta(p_0L\R)'$, $q_0 \in (p_1L\R)'$ and $q_0 \theta(p_0L\R) = q_0 (p_1 L\R)$.
Then $w_0 = wq_0 \in \psi(q)N_1 q$ is a nonzero partial isometry such that $w_0 w_0^\star = wq_0w^\star = \psi(vv^\star) \in \psi(qL\R)'$, $w_0^\star w_0 = q_0 \in (qL\R)'$, and moreover
\begin{align*}
w_0^\star w_0 (L\R)
=q_0 (p_1 L\R) q_0
= q_0 \theta(p_0 L\R) q_0
= w_0^\star \psi(L\R)w_0.
\end{align*}
We conclude that $\psi(L\R)w_0 = w_0 L\R$, which completes the proof.
\end{proof}

\section{A non-isomorphism result for generalized Bernoulli crossed products}

Recall that for any factor $(P,\phi)$ equipped with a normal faithful state, $P_{\phi,\ap} \subset P$ denotes the subalgebra spanned by the eigenvectors of $\Delta_\phi$, i.e.~
\begin{align*}
P_{\phi,\ap} = \big(\lspan \bigcup_{\mu \in \Rdual} P_{\phi,\mu} \big)''. 
\end{align*}
The following theorem provides a non-isomorphism result for all generalized Bernoulli crossed products, with amenable factors $(P,\phi)$ as base algebras, for which the almost periodic part $P_{\phi,\ap}$ is again a factor.  
In particular, it will provide a proof of \cref{thmstar.A}.

\begin{theorem}\label{thm.non-isomorphism-all-generalized-bernoulli}
Let $(P_0,\phi_0)$ and $(P_1,\phi_1)$ be nontrivial amenable factors equipped with normal faithful states, such that $(P_0)_{\phi_0,\ap}$ and $(P_1)_{\phi_1,\ap}$ are factors. Let $\Lambda_0$ and $\Lambda_1$ be icc groups in the class $\cC$ that act on infinite sets $I_0$ and $I_1$ respectively.
Assume for $i=0,1$ that the action $\Lambda_i \curvearrowright I_i$ has no invariant mean, and that for every $g \in \Lambda_i - \{e\}$, the set $\{k \in I_i \mid g \cdot k \not= k \}$ is infinite.

The algebras $P_0^{I_0} \rtimes \Lambda_0$ and $P_1^{I_1} \rtimes \Lambda_1$ are isomorphic if and only if one of the following statements holds.
\begin{enumerate}[\upshape (a),topsep=0mm]
\item The states $\phi_0$ and $\phi_1$ are both tracial, and the actions $\Lambda_i \curvearrowright (P_i, \phi_i)^{\Lambda_i}$ are cocycle conjugate, modulo a group isomorphism $\Lambda_0 \cong \Lambda_1$.
\item The states $\phi_0$ and $\phi_1$ are both nontracial, and the actions $\Lambda_i \curvearrowright (P_i, \phi_i)^{\Lambda_i}$ are, up to reductions and modulo a group isomorphism $\Lambda_0 \cong \Lambda_1$, cocycle conjugate through a state preserving isomorphism.
\end{enumerate}
\end{theorem}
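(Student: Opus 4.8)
The plan is to prove the two implications separately, with essentially all the effort going into the ``only if'' direction. The ``if'' direction I would only sketch: in case (a) one conjugates the unitaries implementing $\Lambda_0$ by the $1$-cocycle to obtain an isomorphism of the crossed products, and in case (b) one additionally absorbs the reductions, using that cutting a crossed product by a projection of the centralizer $(P_i^{\Lambda_i})_{\phi_i^{\Lambda_i}}$ returns an isomorphic amplification. For ``only if'', I would fix an isomorphism $\theta : M_0 \to M_1$, where $M_i = P_i^{I_i}\rtimes\Lambda_i$, and first read off the type: by \cref{lem.only-type-III1,lem.outer-core} the factor $M_i$ is of type $\II_1$ precisely when $\phi_i$ is tracial, so $M_0 \cong M_1$ forces either both $\phi_i$ tracial (heading for (a)) or both $\phi_i$ nontracial (heading for (b)).

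In the tracial case I would argue directly on $M_i$, which is a $\II_1$ factor with amenable base $P_i^{I_i}$ and outer action of the class $\cC$ group $\Lambda_i$, so that $P_i^{I_i}$ is an amenable virtual core subalgebra. Applying \cref{definition:class-C} to both $\theta$ and $\theta^{-1}$ yields the mutual intertwinings $\theta(P_0^{I_0})\prec_{M_1}P_1^{I_1}$ and $P_1^{I_1}\prec_{M_1}\theta(P_0^{I_0})$; the unique crossed product decomposition theorems \cite{popa-vaes;unique-cartan-decomposition-factors-free,popa-vaes;unique-cartan-decomposition-factors-hyperbolic,ioana;cartan-subalgebras-amalgamated-free-product-factors} then produce a unitary $u\in M_1$ with $u\,\theta(P_0^{I_0})\,u^\star = P_1^{I_1}$, a group isomorphism $\Lambda_0\cong\Lambda_1$, and a trace-preserving cocycle conjugacy of the two Bernoulli actions, which is (a). In the nontracial case I would pass to the continuous cores: $\theta$ extends canonically to $\tilde\theta : \tilde M_0 \to \tilde M_1$ intertwining the trace-scaling flows of \eqref{eq:takesakiduality}, and $\tilde M_i = N_i\rtimes\Lambda_i$ with $N_i = P_i^{I_i}\rtimes\R$ an amenable factor on which $\Lambda_i$ acts outerly by \cref{lem.outer-core} (using that a nonperiodic $\phi_i$ makes $N_i$ a factor, \cref{lem.only-type-III1}; a periodic $\phi_i$ is almost periodic, and that case I would reduce to the discrete-core analysis of \cite{vaes-verraedt;classification-type-III-bernoulli-crossed-products}). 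Reducing by a finite-trace projection of $L\R\subset N_i$ and invoking \cref{definition:class-C} as above, factoriality of $N_i$ gives a unitary in $\tilde M_1$ conjugating $\tilde\theta(N_0)$ onto $N_1$, hence a group isomorphism $\Lambda_0\cong\Lambda_1=:\Lambda$ and a cocycle conjugacy $\psi : N_0\to N_1$ of the induced actions $\Lambda\curvearrowright N_i$; after composing with a trace-scaling automorphism I may assume $\psi$ is trace preserving.

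At this stage the hypotheses of \cref{lem.pointwise-fixed-subalgebra-intertwines-in-R-conclusion} are met: the standing assumption that $(P_i)_{\phi_i,\ap}$ is a factor guarantees, through \cref{lem.technical-condition} and \cref{lem.commutant}, that the centralizers $(P_i^{I_i})_{\phi_i^{I_i}}$ are factors and that $L\R$ has relative commutant $(P_i^{I_i})_{\phi_i^{I_i}}\ovt L\R$ in $N_i$. I would apply that lemma to $\psi$ to obtain a partial isometry $w\in N_1$ with $ww^\star\in\psi(L\R)'$, $w^\star w\in(L\R)'$ and $\psi(L\R)w = wL\R$, i.e.~an intertwiner between the two distinguished copies of $L\R$, which carry the modular flows of $\phi_i^{I_i}$. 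Transporting the modular flow along $w$ and reducing by $ww^\star$ and $w^\star w$ (which lie in the centralizers), I would then upgrade $\psi$ to a state-preserving cocycle conjugacy, up to these reductions, between $\Lambda\curvearrowright(P_0^{I_0},\phi_0^{I_0})$ and $\Lambda\curvearrowright(P_1^{I_1},\phi_1^{I_1})$ -- this is the state-preserving refinement carried out in the next lemma. Finally, since under the no-invariant-mean and infinite-support hypotheses the cocycle conjugacy class of a generalized Bernoulli action depends only on $(P,\phi)$ and $\Lambda$, this descends to the cocycle conjugacy up to reductions of $\Lambda\curvearrowright(P_i,\phi_i)^{\Lambda_i}$ asserted in (b).

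The genuinely hard part -- and the new ingredient beyond the almost periodic setting of \cite{vaes-verraedt;classification-type-III-bernoulli-crossed-products} -- is producing the intertwiner of the two copies of $L\R$, that is, \cref{lem.pointwise-fixed-subalgebra-intertwines-in-R,lem.pointwise-fixed-subalgebra-intertwines-in-R-conclusion}. Without almost periodicity there is no discrete core, so $L\R$ cannot be located by eigenvalue considerations, and one must instead run Popa's spectral gap and malleable deformation argument directly on the continuous core, exploiting that $\Lambda\curvearrowright I_i$ has no invariant mean (\cref{lem.rho-not-amenable}) to obtain the required spectral gap. A second delicate point is that the factoriality of $(P_i)_{\phi_i,\ap}$ is exactly what forces the relative commutant of $qL\R$ inside $qN_iq$ to collapse back to $qL\R$, which is what lets the abstract intertwining of \cref{lem.pointwise-fixed-subalgebra-intertwines-in-R-conclusion} be converted, through \cref{lemma:intertwining-abelian-subalg}, into an honest intertwiner of the modular flows rather than merely of enlarged subalgebras.
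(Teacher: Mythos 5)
Your proposal follows the paper's own route essentially step for step: the type trichotomy from \cref{lem.only-type-III1}, the direct $\II_1$ argument via class $\cC$ and the unique crossed product decomposition of \cite{ioana-peterson-popa;amalgamated-free-product} in the tracial case, the deferral of the periodic (hence almost periodic) case to the discrete-core analysis of \cite{vaes-verraedt;classification-type-III-bernoulli-crossed-products}, and in the nonperiodic case the passage to the continuous cores, the identification of $\tilde\theta(N_0)$ with $N_1$ after cutting by a finite-trace projection of $L\R$, the extension to a cocycle conjugacy of the $\Lambda\times\Rdual$-actions, the intertwiner of the two copies of $L\R$ from \cref{lem.pointwise-fixed-subalgebra-intertwines-in-R-conclusion}, and the state-preserving upgrade of \cref{lem.state-preserving-actions}. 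You also correctly single out the location of $L\R$ via spectral gap, and the role of factoriality of $(P_i)_{\phi_i,\ap}$, as the genuinely new ingredients. Two small points to tighten: in the nonperiodic but almost periodic subcase the outerness of $\Lambda_i\curvearrowright N_i$ comes from \cref{lem.outer-core-ap} rather than \cref{lem.outer-core}, and this is precisely where the hypothesis that every nontrivial group element moves infinitely many points of $I_i$ is used.

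The one step that would actually fail is your closing sentence, where you assert that under the no-invariant-mean and infinite-support hypotheses the cocycle conjugacy class of a generalized Bernoulli action $\Lambda\curvearrowright P^I$ depends only on $(P,\phi)$ and $\Lambda$, in order to pass from the actions on $P_i^{I_i}$ to the actions on $P_i^{\Lambda_i}$. This is false: for instance $\Lambda\curvearrowright P^{\Lambda\sqcup\Lambda}\cong(P\ovt P)^\Lambda$ satisfies the same hypotheses as $\Lambda\curvearrowright P^\Lambda$ but is the Bernoulli action with base $(P\ovt P,\phi\ot\phi)$, which is in general not cocycle conjugate to the one with base $(P,\phi)$; similarly $\Lambda_1\times\Lambda_2\curvearrowright P^{\Lambda_1}$ (as in \cref{corstar.D}) is nothing like $\Lambda_1\times\Lambda_2\curvearrowright P^{\Lambda_1\times\Lambda_2}$. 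No such descent is available, and none is performed in the paper: the argument there terminates with the cocycle conjugacy (up to reductions, through a state preserving isomorphism) of the generalized Bernoulli actions $\Lambda_i\curvearrowright(P_i,\phi_i)^{I_i}$. The occurrence of $(P_i,\phi_i)^{\Lambda_i}$ in items (a) and (b) of the statement is evidently a slip for $(P_i,\phi_i)^{I_i}$, as confirmed by the deduction of \cref{thmstar.A}, which specializes to $I_i=\Lambda_i$. You should therefore delete that final bridging claim and state the conclusion for the actions on $P_i^{I_i}$; with that correction the proposal is sound.
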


\begin{proof}
If (a) or (b) holds, then it is clear that the crossed products $P_i^{I_i} \rtimes \Lambda_i$ are isomorphic. For the reverse implication, let $\psi : P_0^{I_0} \rtimes \Lambda_0 \to P_1^{I_1} \rtimes \Lambda_1$ be a $\star$-isomorphism. Denote by $\vphi_i = \phi_i^{I_i}$ be the product state on $P_i^{I_i}$.
By \cref{lem.only-type-III1}, we may distinguish the following three distinct cases.

{\bf Case 1.}~{\em One of the states $\phi_i$ is tracial.}\\
If one of the states $\phi_i$ is tracial, $P_{0}^{I_{0}}$, $P_{0}^{I_{0}} \rtimes \Lambda_0$, $P_{1}^{I_{1}}$ and $P_{1}^{I_{1}} \rtimes \Lambda_1$ are all necessarily of type $\II_1$. 

Since the groups $\Lambda_i$ are icc and belong to the class $\cC$, it follows from \cite[Lemma 8.4]{ioana-peterson-popa;amalgamated-free-product} (see also \cite[lemma 4.1]{vaes-verraedt;classification-type-III-bernoulli-crossed-products}) that $\psi(P_0^{I_0})$ and $P_1^{I_1}$ are unitarily conjugate inside $P_1^{I_1} \rtimes \Lambda_1$. Since the actions $\Lambda_i \curvearrowright P_i^{I_i}$ are outer, this means that $\Lambda_0 \cong \Lambda_1$ and that the actions $\Lambda_i \curvearrowright P_i^{I_i}$ are cocycle conjugate.

{\bf Case 2.}~{\em One of the states $\phi_i$ is periodic.}\\
Note that under the extra assumption that the $P_i^{I_i} \rtimes \Lambda_i$ are full factors, the result directly follows from the proof of \cite[Theorem 6.1]{vaes-verraedt;classification-type-III-bernoulli-crossed-products}. 
We now provide a proof of the general situation. First note that since $\Lambda_i$ is icc and $\mathcal{Z}(P_i^{I_i} \rtimes \R) \subset L\R$ by \cref{lem.only-type-III1}, we get that $\mathcal{Z}((P_i^{I_i} \rtimes \R) \rtimes \Lambda_i) = \mathcal{Z}(P_i^{I_i}\rtimes \R)$, and hence $P_i^{I_i} \rtimes \Lambda_i$ and $P_i^{I_i}$ are of the same type. In particular, it follows from the type classification (see \cref{lem.only-type-III1}) that since one of the states $\phi_i$ is periodic, also the other is, and that the periods of both states $\phi_i$ are equal. Let $T >0 $ denote this period.

Put $G = \R/T\Z$, let $\sigma^i : G \curvearrowright P_i^{I_i}$ denote the actions induced by $\sigma^{\vphi_i}$, and put $N_i = P_i^{I_i} \rtimes G$. By Connes' Radon-Nykodym cocycle theorem for modular automorphism groups, $\psi$ is a cocycle conjugacy between the modular actions $\sigma^i$ of $G$ and therefore extends to a $\star$-isomorphism $\Psi : M_0 \to M_1$ between the crossed products $M_i = (P_i^{I_i} \rtimes \Lambda_i) \rtimes G = N_i \rtimes \Lambda_i$.
Note that $P_i^{I_i}$ has a factorial discrete decomposition \cite[Lemma 2.4]{vaes-verraedt;classification-type-III-bernoulli-crossed-products}, hence $N_i$ is the hyperfinite $\II_\infty$ factor. Also note that since $\Lambda_i$ has trivial center, the action $\Lambda_i \curvearrowright N_i$ is outer \cite[Lemmas 2.2 and 2.5]{vaes-verraedt;classification-type-III-bernoulli-crossed-products}. 
Proceeding exactly as in the proof of \cite[Theorem 5.1]{vaes-verraedt;classification-type-III-bernoulli-crossed-products}, we obtain that the action $\Lambda_0 \curvearrowright (P_0^{I_0}, \vphi_0)$ is cocycle conjugate to the reduced cocycle action $(\Lambda_1 \curvearrowright (P_1^{I_1}, \vphi_1))^p$ for some projection $p \in (P_1^{I_1})_{\vphi_1}$.

{\bf Case 3.}~{\em Both states $\phi_i$ are nonperiodic.}\\
Let $N_i = P_i^{I_i} \rtimes \R$ denote the crossed product of $P_i^{I_i}$ with the modular action of $\vphi_i$. Since $\phi_i$ is not periodic, $P_i^{I_i}$ is a type $\III_1$ factor by \cref{lem.only-type-III1}, and hence $N_i$ is a factor. 
It follows from either \cref{lem.outer-core} or from \cref{lem.outer-core-ap} that the induced action $\Lambda_i \curvearrowright N_i$ is outer.

By Connes' Radon-Nykodym cocycle theorem for modular automorphism groups, $\psi$ is a cocycle conjugacy between the modular automorphism groups $(\sigma^{\vphi_0}_t)_{t \in \R}$ and $(\sigma^{\vphi_1}_t)_{t \in \R}$ and therefore extends to a $\star$-isomorphism $\Psi : M_0 \to M_1$ between the crossed products $M_i = (P_i^{I_i} \rtimes \Lambda_i) \rtimes \R = N_i \rtimes \Lambda_i$.
Since the actions $\Lambda_i \actson P_i^{I_i}$ are state preserving, the action of $\Lambda_i$ on $N_i$ equals the identity on $L\R \subset N_i$.

We claim that $\Psi(N_0)$ and $N_1$ are unitarily conjugate inside $M_1$. Take a projection $p_0 \in L\R$ of finite trace. Then $\Psi(p_0)$ is a projection of finite trace in the $\II_\infty$ factor $N_1 \rtimes \Lambda_1$. After a unitary conjugacy of $\Psi$, we may assume that $\Psi(p_0) = p_1 \in L\R$. Since the projections $p_i$ are $\Lambda_i$-invariant, we have
\begin{align*}
p_i M_i p_i = p_i N_i p_i \rtimes \Lambda_i \; .
\end{align*}
The restriction of $\Psi$ to $p_0 M_0 p_0$ thus yields a $\star$-isomorphism of $p_0 N_0 p_0 \rtimes \Lambda_0$ onto $p_1 N_1 p_1 \rtimes \Lambda_1$. Because the groups $\Lambda_i$ are icc and belong to the class $\cC$, it follows from \cite[Lemma 8.4]{ioana-peterson-popa;amalgamated-free-product} that $\Psi(p_0 N_0 p_0)$ is unitarily conjugate to $p_1N_1p_1$. Since the $N_i$ are $\II_\infty$ factors, the claim follows.

By the claim in the previous paragraph, we can choose a unitary $u \in M_1$ such that $u \Psi(N_0)u^\star = N_1$. In particular, $\Lambda_0 \cong \Lambda_1$ and $\Ad u \circ \Psi$ is a cocycle conjugacy between $\Lambda_0 \actson N_0$ and $\Lambda_1 \actson N_1$. 

Identify $\Rdual$ as the dual of $\R$ using the pairing $\inp{t}{\mu} = \mu^{\mathbf{i}t}$ for $t \in \R,\mu \in \Rdual$, and denote by $\hat{\sigma}^{\vphi_i} : \Rdual \curvearrowright N_i \rtimes \Lambda_i$ the dual action w.r.t.~$\sigma^{\vphi_i}$.
We will now extend the obtained cocycle conjugacy to a cocycle conjugacy between the actions $\Lambda_i \times \Rdual \curvearrowright N_i$. 
By construction, $\Psi \circ \hat{\sigma}^{\vphi_0}_\mu = \hat{\sigma}^{\vphi_1}_\mu \circ \Psi$ for all $\mu \in \Rdual$. Therefore, $\Psi$ further extends to a $\star$-isomorphism $\tilde\Psi : M_0 \rtimes \Rdual \to M_1 \rtimes \Rdual$ satisfying $\tilde\Psi(\lambda(\mu)) = \lambda(\mu)$ for all $\mu \in \Rdual$. 
Note that we can view $M_i \rtimes \Rdual$ as $N_i \rtimes (\Lambda_i \times \Rdual)$. Putting $\Theta = \Ad u \circ \tilde\Psi$, we get an isomorphism $N_0 \rtimes (\Lambda_0 \times \Rdual)\to N_1\rtimes (\Lambda_1 \times \Rdual)$ satisfying
\begin{align*}
\Theta(N_0) &= N_1, & \Theta(N_0 \rtimes \Lambda_0) &= N_1 \rtimes \Lambda_1, &  \Theta(\lambda(\mu)) = u\hat{\sigma}^{\vphi_1}_\mu(u^\star) \lambda(\mu) \quad\text{for }\mu \in \Rdual.
\end{align*}
Using that the actions $\Lambda_i \curvearrowright N_i$ are outer, that elements in $N_i \rtimes \Lambda_i$ have a unique Fourier decomposition, and that $u\hat{\sigma}^{\vphi_1}_\mu(u^\star) \in \mathcal{N}_{N_1\rtimes \Lambda_1}(N_1)$, this implies that the restriction of $\Theta$ to $N_0$ is a cocycle conjugacy between the actions $\Lambda_i \times \Rdual \curvearrowright N_i$, modulo a continuous group homomorphism $\delta : \Lambda_0 \times \Rdual \to \Lambda_1 \times \Rdual$ satisfying $\delta(\Lambda_0) = \Lambda_1$ and $\delta(e,\mu) \in \Lambda_1 \times \{\mu\}$. Since $\Lambda_i$ has trivial center, this means that $\delta(g,\mu) = (\delta_0(g),\mu)$ for all $g \in \Lambda_0, \mu \in \Rdual$, and a group isomorphism $\delta_0 : \Lambda_0 \to \Lambda_1$.

Denoting by $\alpha_i : \Lambda_i \curvearrowright I_i$ the given actions, and by $\hat\alpha_i : \Lambda_i \to \Aut(P_i^{I_i}\rtimes \R)$ the induced actions on the continuous cores, we now have obtained that the actions $\hat\alpha_0 : \Lambda_0 \curvearrowright N_0$ and $\hat\alpha_1 \circ \delta_0 : \Lambda_0 \curvearrowright N_1$ are cocycle conjugate. 

Note that for $i=0,1$, the centralizer of $P_i^{I_i}$ w.r.t.~$\phi_i$ is a factor, by \cref{lem.technical-condition}. By \cref{lem.pointwise-fixed-subalgebra-intertwines-in-R-conclusion} with $\Lambda = \Lambda_0$ acting on $I_0$ by $\alpha_0$, and on $I_1$ by $\alpha_1 \circ \delta_0$, we find a partial isometry $w \in N_1$ such that $ww^\star \in \Theta(L\R)'$ and $w^\star w \in (L\R)'$, satisfying $\Theta(L\R)w = wL\R$. 
By \cref{lem.state-preserving-actions} below, we conclude that a reduction of the action $\alpha^0 : \Lambda_0\curvearrowright P_0^{I_0}$ is cocycle conjugate to a reduction of $\alpha^1 \circ \delta_0: \Lambda_0\curvearrowright P_1^{I_i}$, through a state preserving isomorphism. 
\end{proof}

\begin{lemma}\label{lem.state-preserving-actions}
Let $(P_0,\vphi_0)$, $(P_1,\vphi_1)$ be von Neumann algebras with normal faithful states and separable preduals. 
Let $\Lambda$ be a countable group, with state preserving actions $\Lambda \curvearrowright^{\alpha^i} (P_i,\vphi_i)$, such that the centralizers $(P_i)_{\vphi_i}$ are factors. Denote by $P_i \rtimes \R$ the crossed product of $P_i$ with the modular action of $\vphi_i$. Let $\Lambda \curvearrowright^{\tilde \alpha^i} P_i \rtimes \R$ denote the induced action given by $\tilde \alpha^i_s (\pi_{\sigma^{\vphi_i}}(x)) = \pi_{\sigma^{\vphi_i}}(\alpha^i_s(x))$ for $x \in P_i, s \in \Lambda$ and $\tilde \alpha^i_s(\lambda(t)) = \lambda(t)$ for $t \in \R$, and let $\Rdual \curvearrowright P_i \rtimes \R$ denote the dual action w.r.t.~$\sigma^{\vphi_i}$.

Assume that $\psi : P_0 \rtimes \R \to P_1 \rtimes \R$ is an isomorphism such that the induced actions $\Lambda \times \Rdual \curvearrowright P_i \rtimes \R$ are cocycle conjugate through $\psi$, and that $w \in P_1 \rtimes \R$ is a partial isometry such that $ww^\star \in \psi(L\R)'$, $w^\star w \in (L\R)'$ and $\psi(L\R)w = wL\R$. 
Then the actions $\Lambda \curvearrowright (P_i, \vphi_i)$ are, up to reductions, cocycle conjugate through a state preserving isomorphism.
\end{lemma}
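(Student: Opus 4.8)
The plan is to recover each pair $(P_i,\vphi_i)$ from its continuous core $N_i=P_i\rtimes\R$ as the fixed point algebra of the dual action, equipped with the restriction of the dual weight: concretely, $\pi_\sigma(P_i)=N_i^{\hat\sigma^{\vphi_i}}$, and $\vphi_i$ is the restriction to $P_i$ of the dual weight $\tilde\vphi_i$, whose modular automorphism group is the inner one $\Ad\lambda(\cdot)$ implemented by $L\R$. Thus it suffices to upgrade $\psi$, with the help of $w$, into a trace preserving isomorphism $\theta$ between reductions $q_0N_0q_0$ and $q_1N_1q_1$ by projections $q_i\in(P_i)_{\vphi_i}$ which (i) intertwines the dual actions and (ii) satisfies $\theta(\lambda(t)q_0)=\lambda(t)q_1$ for all $t\in\R$. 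Restricting such a $\theta$ to the dual fixed point algebras $(q_iN_iq_i)^{\hat\sigma^{\vphi_i}}=q_iP_iq_i$ then yields a state preserving isomorphism $q_0P_0q_0\to q_1P_1q_1$ (it carries $h_0$ to $h_1$ and preserves the trace, hence maps the reduced dual weights to one another), and the retained $\Lambda$-equivariance up to a $1$-cocycle descends to a state preserving cocycle conjugacy of the reduced actions $\Lambda\curvearrowright q_iP_iq_i$, which is exactly the conclusion.

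Before straightening $L\R$ I would perform two normalisations. First, composing $\psi$ with a dual automorphism $\hat\sigma^{\vphi_0}_\mu$ (which commutes with $\Lambda$ and fixes $L\R$ setwise, so that all three properties of $w$ and the $\Lambda$-equivariance are retained) we may assume that $\psi$ is trace preserving. Second, after replacing $\psi$ by $\Ad u^\star\circ\psi$ and $w$ by $u^\star w$ for a suitable unitary $u$ absorbing the $\Rdual$-part of the cocycle (a direct check shows the three defining relations of $w$ are preserved), we may assume that $\psi$ strictly intertwines the dual actions. With this normalisation $\psi(P_0)=N_1^{\hat\sigma^{\vphi_1}}=P_1$ and $\hat\sigma^{\vphi_1}_\mu(\psi(\lambda(t)))=\mu^{\mathbf i t}\psi(\lambda(t))$, so $\psi(\lambda(t))$ transforms under the dual action exactly as $\lambda(t)$ does. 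The only remaining discrepancy is that $\psi$ carries the canonical subalgebra $L\R\subset N_0$ to $\psi(L\R)\ne L\R$, hence carries the dual weight to a different weight on $N_1$ and produces a state $\vphi_0\circ\psi^{-1}$ on $P_1$ which need not equal $\vphi_1$; correcting precisely this discrepancy is the role of $w$.

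The technical heart is to straighten $\psi(L\R)$ onto $L\R$ equivariantly, up to reductions, and here the hypothesis that $(P_i)_{\vphi_i}$ are factors is used: by \cref{lem.commutant}, $M_i:=(L\R)'\cap N_i=(P_i)_{\vphi_i}\ovt L\R$ has centre exactly $L\R=\cZ(M_i)$, and likewise $\psi(L\R)$ is the centre of $\psi(M_0)=(\psi(L\R))'\cap N_1$. The relations $w^\star w\in(L\R)'\cap N_1=M_1$ and $ww^\star\in(\psi(L\R))'\cap N_1=\psi(M_0)$ show that $\Ad w^\star$ conjugates a corner of $\psi(M_0)$ onto a corner of $M_1$, and, since $\psi(L\R)w=wL\R$, carries $\psi(L\R)$ onto $L\R$; both are centres carrying the free ergodic trace-scaling dual flow, on which $\psi$ makes the two flows agree. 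I would then promote $w$ to a $\hat\sigma$-equivariant partial isometry, for instance by passing to the second core $N_1\rtimes_{\hat\sigma^{\vphi_1}}\Rdual\cong P_1\ovt B(L^2(\R))$, where the dual flow becomes inner, and averaging there. This simultaneously replaces the support and range projections of $w$ by projections lying in the centralisers $(P_i)_{\vphi_i}$ (equivalently, in $N_i^{\hat\sigma^{\vphi_i}}$), so that $q_iL\R$ is a full copy of $L\R$, and reduces the residual automorphism $\theta|_{L\R}$ to one commuting with the translation dual flow and preserving the scaled trace. Such an automorphism of $L\R$ is a translation $\lambda(t)\mapsto c^{\mathbf i t}\lambda(t)$ for some $c\in\Rdual$, which I absorb by composing once more with $\hat\sigma^{\vphi_0}_c$, obtaining $\theta$ with properties (i) and (ii) and $q_i\in(P_i)_{\vphi_i}$.

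The main obstacle is exactly this equivariant straightening of $w$: turning the bare partial-isometry intertwining of the two copies of $L\R$ into an identification that both respects the dual flow (so that it descends to the base algebras and fixes the state) and has its support and range projections inside the centralisers (so that the resulting reduction is a genuine reduction of $(P_i,\vphi_i)$ by a projection of $(P_i)_{\vphi_i}$, as required by the notion of reduced cocycle action in \cref{section.cocycle-actions}). It is here that the factoriality of $(P_i)_{\vphi_i}$ is essential, since it is what guarantees $L\R=\cZ(M_i)$ and hence the freeness and ergodicity of the dual flow on $L\R$; without it, $\psi(L\R)$ and $L\R$ could be intertwined only partially and the state could not be pinned down. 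Once $\theta$ is in hand, restricting to the dual fixed points and reading off the surviving $\Lambda$-cocycle completes the proof.
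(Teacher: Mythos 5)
Your high-level plan — recover $(P_i,\vphi_i)$ from the core via the dual action, use $w$ to straighten $\psi(L\R)$ onto $L\R$, and use factoriality of $(P_i)_{\vphi_i}$ to control the reductions — identifies the right ingredients, but two of your key steps are gaps rather than arguments. First, the normalisation ``after replacing $\psi$ by $\Ad u^\star\circ\psi$ \dots\ we may assume that $\psi$ strictly intertwines the dual actions'' asserts that the $\Rdual$-part of the cocycle $(V_{e,\mu})_{\mu\in\Rdual}$ is a coboundary. This is a nontrivial cohomological vanishing statement (stability of the dual action), which is not automatic under the hypotheses of the lemma ($N_i=P_i\rtimes\R$ need not be properly infinite, e.g.\ when $\vphi_i$ is a trace), and you neither prove nor cite it. Note also that if this normalisation were available, $\psi$ would map $N_0^{\hat\sigma^{\vphi_0}}=P_0$ onto $P_1$ on the nose, which is suspiciously stronger than the ``up to reductions'' conclusion. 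The paper sidesteps this entirely: it extends $\psi$ to the double crossed product by setting $\Psi(\lambda(\mu))=V_{1,\mu}\lambda(\mu)$, so the $\Rdual$-cocycle is absorbed into the extension rather than trivialised.

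Second, the ``technical heart'' of your argument — promoting $w$ to a $\hat\sigma$-equivariant partial isometry ``by passing to the second core \dots\ and averaging there'' — is not a valid operation: $\Rdual$ is noncompact, and averaging a partial isometry over a noncompact group action does not produce an equivariant partial isometry (or anything nonzero in general). The paper's actual mechanism at this point is quite different: after Takesaki duality identifies $N_i\rtimes\Rdual$ with $P_i\ovt B(L^2(\Rdual))$ and sends $L\R$ to $L^\infty(\Rdual)$, it disintegrates $p_i(P_i\ovt L^\infty(\Rdual))p_i$ as a direct integral over $\Rdual$ with fibres $(p_i(s)P_ip_i(s),\vphi_i)$, disintegrates $\theta=\Ad\hat w^\star\circ\Theta$ into a measurable field of fibre isomorphisms $\theta_s$, and then \emph{evaluates at a single point} $s$ of a conull set where all the required identities hold. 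Choosing one good fibre is what replaces your averaging, and it is also exactly where the reductions by projections $p_i(s)\in(P_i)_{\vphi_i}$ and the state preservation appear. Finally, you do not address why $p_i$ and $\alpha_g^i(p_i)$ are equivalent in $(P_i)_{\vphi_i}\ovt L^\infty(\Rdual)$ (needed for the reduced cocycle action to make sense); the paper gets this from a central trace argument that uses the factoriality of $(P_i)_{\vphi_i}$ in an essential way beyond the identity $L\R=\cZ(M_i)$ that you invoke.
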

\begin{proof}
Put $N_i = P_i \rtimes \R$, and denote by $\psi : N_0 \to N_1$ the given isomorphism and by $w\in N_1$ the partial isometry such that $ww^\star \in \psi(L\R)'$, $w^\star w \in (L\R)'$ and $\psi(L\R)w = wL\R$. 
Let $V_{g,\mu} \in \mathcal{U}(N_1)$ be a 1-cocycle for the action $\Lambda \times \Rdual \curvearrowright N_1$ such that
\begin{align}\label{eq:coco}
 \psi \circ \alpha_g^0 \circ \hat{\sigma}^{\vphi_0}_\mu = \Ad V_{g, \mu} \circ \alpha_{g}^1 \circ \hat{\sigma}^{\vphi_1}_\mu \circ \psi \qquad \text{for all }g \in \Lambda, \mu \in \Rdual.
\end{align}
Identify $\Rdual$ as the dual of $\R$ using the pairing $\inp{t}{\mu} = \mu^{\mathbf{i}t}$ for $t \in \R,\mu \in \Rdual$, and denote by $\hat{\sigma}^{\vphi_i} : \Rdual \curvearrowright N_i$ the dual action w.r.t.~$\sigma^{\vphi_i}$. 
Extend $\psi$ to an isomorphism $\Psi : N_0 \rtimes \R^+_0 \to N_1 \rtimes \Rdual$ by putting $\Psi(x) = \psi(x)$ for $x \in P_0 \rtimes \R$ and $\Psi(\lambda(\mu)) = V_{1,\mu} \lambda(\mu)$.
Put $\kappa \in \Rdual$ such that $\Tr_{\vphi_1} \circ \psi = \kappa \Tr_{\vphi_0}$, then $\Psi$ scales the dual weights of $\Tr_{\vphi_i}$ by the same factor $\kappa$, i.e.~$\tilde \Tr_{\vphi_1} \circ \tilde\Psi = \kappa \tilde \Tr_{\vphi_0}$.
Identify $N_i \rtimes \Rdual$ with $P_i \ovt B(L^2(\Rdual))$ through the isomorphism
\begin{align*}
\Phi_i^\mathfrak{F} :  N_i \rtimes \Rdual \to P_i \ovt B(L^2(\Rdual)),\quad \Phi_i^\mathfrak{F} = \Ad\: (1 \otimes \mathfrak{F}) \circ \Phi_i,
\end{align*}
where $\Phi_i :  N_i \rtimes \Rdual \to P_i \ovt B(L^2(\R))$ denotes the Takesaki duality isomorphism as defined in \cite[Theorem X.2.3]{takesaki;theory-operator-algebras-II}, and $\mathfrak{F} : L^2(\R) \to L^2(\Rdual)$ is the Fourier transform given by
\begin{align*}
\mathfrak{F}(f)(\mu) = \frac1{\sqrt{2\pi}} \int_{t \in \R} f(t) \mu^{-\mathbf{i}t} dt, \quad f \in L^1(\R) \cap L^2(\R).
\end{align*}
Note that the duality isomorphism $\Phi_i$ maps $\lambda(t) \in L\R$ to $\lambda(t) \in B(L^2(\R))$, and hence $\Phi_i^\mathfrak{F}(L\R) = \Ad \mathfrak{F}\, (L\R) = L^\infty(\Rdual)$. Also, $x \in (P_i)_{\vphi_i}$ is mapped to $\Phi_i^\mathfrak{F}(x) = x \otimes 1$.
Let $M$ be the operator affiliated with $L^\infty(\Rdual)$ given by $M(f)(t) = t f(t)$, and let $\omega$ be the weight on $B(L^2(\Rdual))$ given by $\omega = \Tr(M \cdot)$.
Observe that $\tilde\Tr_{\vphi_i} = (\vphi_i \otimes \omega) \circ \Phi_i^\mathfrak{F}$.
Putting $\Theta = \Phi_1^\mathfrak{F} \circ \Psi \circ (\Phi_0^\mathfrak{F})^{-1}$ and $\hat{w} = \Phi_1^\mathfrak{F}(w)$, we get an isomorphism
\begin{align*}
 \Theta : P_0 \ovt B(L^2(\Rdual)) \to P_1 \ovt B(L^2(\Rdual))
\end{align*}
satisfying
\begin{align*}
(\vphi_1 \otimes \omega) \circ \Theta &= \kappa (\vphi_0 \otimes \omega),\\
\Theta \circ (\alpha^0_g \otimes \id) &= \Ad \Phi_1^\mathfrak{F}(V_{g,1})  \circ (\alpha^1_g \otimes \id) \circ \Theta \quad\text{for all }g \in \Lambda,\\
\Theta(L^\infty(\Rdual))\,\hat{w} &=\hat{w}\, L^\infty(\Rdual).
\end{align*}
Here, the second equality follows from \eqref{eq:coco}. Put $p_0 = \Theta^{-1}(\hat{w}\hat{w}^\star)$, $p_1 = \hat{w}^\star \hat{w}$.
Taking the relative commutants of the last equality, we obtain that $\Theta(p_0 (P_0\ovt L^\infty(\Rdual))p_0)\, \hat{w} = \hat{w}\, (p_1 (P_1 \ovt L^\infty(\Rdual)) p_1)$. Denote by $M_i = p_i(P_i \ovt L^\infty(\Rdual))p_i$ and let $\theta : M_0 \to M_1$ be the isomorphism given by $\theta = \Ad \hat{w}^\star \circ \Theta$. 

For $i=0,1$ and $g \in \Lambda$, note that the projections $p_i$ and $\alpha_g^i(p_i)$ both belong to $(P_i)_{\vphi_i} \ovt L^\infty(\Rdual)$, since 
\begin{align*}
p_i \in \Psi_i^{\mathfrak{F}}(N_i \cap (L\R)') = \Psi_i^{\mathfrak{F}}((P_i)_{\vphi_i} \ovt L\R) = (P_i)_{\vphi_i} \ovt L^\infty(\Rdual).
\end{align*}
Moreover, the central traces of these projections inside $(P_i)_{\vphi_i} \ovt L^\infty(\Rdual)$ coincide,
since $(P_i)_{\vphi_i}$ is a factor and the central trace is thus given by the conditional expectation $E : (P_i)_{\vphi_i} \ovt L^\infty(\Rdual) \to L^\infty(\Rdual)$, which satisfies $E = E \circ \alpha_g^i$.

Therefore, we find a partial isometry $v_{i,g} \in (P_i)_{\vphi_i} \ovt L^\infty(\Rdual)$ such that $p_i = v_{i,g}v_{i,g}^\star$ and $\alpha_g(p_i) = v_{i,g}^\star v_{i,g}$. Also choose $v_{i,e} = p_i$. Then the formula $\alpha_g^{p_i}(pxp) = v_{i,g}(\alpha_g^i \otimes \id)(pxp)v_{i,g}^\star$ for $x \in M_i$ defines a cocycle action $\alpha^{p_i} : \Lambda \curvearrowright (p_iM_ip_i, \vphi_i^p)$, where $\vphi_i^{p_i}$ is the state given by $\vphi_i^{p_i}(pxp) = \vphi_i(pxp)/\vphi(p_i)$ for $x \in M_i$.
Putting $W_g =\hat{w}^\star \Theta(v_{0,g}) \Phi_1^\mathfrak{F}(V_{g,1}) \alpha_g^1(\hat{w}) v_{1,g}^\star \in p_1\Phi^\mathfrak{F}_1(N_1)p_1 \subset p_1(P_1 \ovt B(L^2\Rdual))p_1$, we now obtain that, as isomorphisms $M_0 \to M_1$,
\begin{align}\label{eq:conj-theta}
\theta \circ \alpha_g^{p_0} = \Ad W_g \circ \alpha_g^{p_1} \circ \theta \quad\text{for all }g\in \Lambda.
\end{align}
Note that by construction, $\alpha_g^{p_i}$ is the trivial action on $p_iL^\infty(\Rdual)$, and $\theta(p_0L^\infty(\Rdual)) = p_1L^\infty(\Rdual)$. Thus we have that $W_g \in p_1\Phi_1^\mathfrak{F}(N_1)p_1 \cap (p_1L^\infty(\Rdual))' = p_1( (P_1)_{\vphi_1} \ovt L^\infty(\Rdual))p_1$.

For $i = 0, 1$, consider the integral decomposition 
\begin{align*}
 L^2(P_i,\vphi_i) \ovt L^2(\Rdual) = \int_{\Rdual}^\oplus L^2(P_i,\vphi_i) ds, \quad (P_i \ovt L^\infty(\Rdual), \vphi_i \otimes \omega) = \int_{\Rdual}^\oplus (P_i, \vphi_i)\, ds.
\end{align*}
Note that $ds$ satisfies $\int_{\mu_1}^{\mu_2} ds = \int_{\log \mu_1}^{\log \mu_2} e^t d\lambda(t)$, with $d\lambda$ the Lebesgue measure on $\R$.
In this disintegration, we can write $p_i = \int_\Rdual^\oplus p_i(s) ds$ for a measurable field of projections $s \mapsto p_i(s)$ in $P_i$. Then we find as disintegration of $M_i$,
\begin{align*}
 M_i = p_i(P_i \ovt L^\infty(\Rdual))p_i = \int_\Rdual^\oplus p_i(s)P_i p_i(s) ds.
\end{align*}
Also choose measurable fields of partial isometries $s \mapsto v_{i,g}(s)$ in $P_i$, and $s \mapsto W_g(s)$ in $P_1$, such that $v_{i,g} = \int_\Rdual^\oplus v_{i,g}(s)ds$ and $W_g = \int_\Rdual^\oplus W_g(s)ds$.

Fix a countable set of measurable fields $\mathfrak{X} = \{s \mapsto x(s)\}$, corresponding to a countable dense subset of $M_0$, such that for almost every $s$, the set $\{x(s) | x \in \mathfrak{X}\}$ is dense in $p_0(s)P_0 p_0(s)$, and such that for all $x \in \mathfrak{X}$ and all $g \in \Lambda$, also the measurable field $s \mapsto v_{0,g}(s) \alpha_g^0(x(s))v_{0,g}(s)^\star$ belongs to $\mathfrak{X}$.
Since $\theta : M_0 \to M_1$ is an isomorphism and by uniqueness of disintegrations, we find for almost all $s \in \Rdual$ an isomorphism $\theta_s : (p_0(s)P_0p_0(s), \vphi_0^{p_0(s)}) \to (p_1(s)P_1p_1(s), \vphi_1^{p_1(s)})$, such that
\begin{align}\label{eq:disint-theta}
\theta\left( \int_\Rdual^\oplus x(s)ds \right) = \int_\Rdual^\oplus \theta_s(x(s))ds \qquad \text{for all measurable fields $x \in \mathfrak{X}$}.
\end{align}
Combining \eqref{eq:conj-theta} and \eqref{eq:disint-theta}, we get that for all measurable fields $s \mapsto x(s)$ in $\mathfrak{X}$,
\begin{align*}
 \int_\Rdual^\oplus \theta_s\big( v_{0,g}(s) \alpha_g^0(x(s)) v_{0,g}(s)^\star \big) ds
 = \int_\Rdual^\oplus W_g(s) v_{1,g}(s) \alpha_g^1(\theta_s(x(s)))  v_{1,g}(s)^\star W_g(s)^\star ds.
\end{align*}
We then can find a conull subset $S \subset L\Rdual$ such that for all $s \in S$, $\theta_s$ is defined; $\{x(s)\mid x \in \mathfrak{X} \}$ is dense in $p_0(s)P_0p_0(s)$; $v_{i,g}(s)$ is a partial isometry with left support $p_i(s)$ and right support $\alpha_g(p_i(s))$, fixed by all $\{\sigma_t^{\vphi_i} \mid t \in \mathbb{Q}\}$; $W_g(s)$ is a unitary in $p_1(s) P_1 p_1(s)$ fixed by all $\{\sigma_t^{\vphi_1}\mid t \in \mathbb{Q}\}$; and such that for all $x \in \mathfrak{X}$,
\begin{align*}
\theta_s\big( v_{0,g}(s) \alpha_g^0(x(s)) v_{0,g}(s)^\star \big)
 = W_g(s) v_{1,g}(s) \alpha_g^1(\theta_s(x(s)))  v_{1,g}(s)^\star W_g(s)^\star.
\end{align*}
In particular, it follows for all $s \in S$ and for all $g \in \Lambda$, that $v_{i,g}(s) \in (P_i)_{\vphi_i}$, $W_g(s) \in (P_1)_{\vphi_1}$, and that
\begin{align*}
\theta_s \circ \Ad v_{0,g}(s) \circ \alpha_g^0 = \Ad W_g(s) \circ \Ad v_{1,g}(s) \circ \alpha_g^1 \circ \theta_s.
\end{align*}
Choosing some $s \in S$ for which $p_0(s)$ is nonzero, we obtain that the actions $\alpha^i : \Lambda\curvearrowright P_i$ are, up to reductions, cocycle conjugate through a state preserving isomorphism.
\end{proof}

\section{Proofs of \texorpdfstring{\cref{thmstar.A,corstar.B,corstar.C,corstar.D}}{Theorems A-D}}

\Cref{thmstar.A,corstar.B} follow now easily from \cref{thm.non-isomorphism-all-generalized-bernoulli}. Nevertheless, we give a detailed proof for the convenience of the reader.

\begin{proof}[Proof of \cref{thmstar.A}]
Let $\Lambda_i$ be icc groups in the class $\cC$, and $(P_i,\phi_i)$ be nontrivial amenable factors with normal faithful states such that $(P_i)_{\phi_i,\ap}$ are factors. Since the class $\cC$ does not contain amenable groups, the action $\Lambda_i \curvearrowright \Lambda_i$ has no invariant mean. It is obvious that every nontrivial $g \in \Lambda_i$ moves infinitely many points of $\Lambda_i$. The result follows now directly from \cref{thm.non-isomorphism-all-generalized-bernoulli} with $I_i = \Lambda_i$.  
\end{proof}

\begin{proof}[Proof of \cref{corstar.B}]
For $i=0,1$, let $\Lambda_i \in \cC$ be an icc group, and let $(P_i,\phi_i)$ be a nontrivial amenable factor with a normal faithful weakly mixing state $\phi_i$. Obviously, if $\Lambda_0 \cong \Lambda_1$ and the actions $\Lambda_i \curvearrowright (P_i,\phi_i)^{\Lambda_i}$ are conjugate, then the von Neumann algebras $P_i^{\Lambda_i} \rtimes \Lambda_i$ are isomorphic. Assume conversely that $(P_0,\phi_0)^{\Lambda_0} \rtimes \Lambda_0 \cong (P_1,\phi_1)^{\Lambda_1} \rtimes \Lambda_1$ are isomorphic. 

Since $\phi_i$ is weakly mixing, $\Delta_{\phi_i}$ has no eigenvalues, and hence we have that $(P_i)_{\phi,\ap} = \C$ for $i=0,1$. 
By \cref{lem.technical-condition} and putting $\vphi_i = \phi_i^{\Lambda_i}$, we then also get that $(P_i^{\Lambda_i})_{\vphi_i} = \C$. Applying \cref{thmstar.A}, we get that the groups $\Lambda_i$ are isomorphic, and that the actions $\Lambda_i \curvearrowright P_i^{\Lambda_i}$ are conjugate modulo the isomorphism $\Lambda_0 \cong \Lambda_1$, through a state preserving isomorphism.
\end{proof}

For the proof of \cref{corstar.C,corstar.D}, we recall the notion of a \emph{generalized 1-cocycle}.
Let $\alpha: G \to \Aut(M,\varphi)$ be an action of a locally compact group on a von Neumann algebra $(M,\varphi)$ with an n.s.f.~weight $\varphi$.
A \emph{generalized 1-cocycle} for $\alpha$ with \emph{support projection} $p \in M_\varphi$ is a continuous map $w:G \to M_\varphi$ such that $w_g \in p M_\varphi \alpha_g(p)$ is a partial isometry with $p=w_g w_g^\star$ and $\alpha_g(p) = w_g^\star w_g$, and
\begin{align*}
 w_{gh} = \Omega(g,h) w_g \alpha_g(w_h) \quad\text{ for all }g,h \in G,
\end{align*}
where $\Omega(g,h)$ is a scalar 2-cocycle.

\begin{proof}[Proof of \cref{corstar.C}]
For $i=0,1$, let $\Lambda_i$ be a direct product of two icc groups in the class $\cC$, and let $(P_i,\phi_i)$ be a nontrivial amenable factor with a normal faithful state $\phi_i$ such that $(P_i)_{\phi_i,\ap}$ is a factor. 
Assume that $(P_0,\phi_0)^{\Lambda_0} \rtimes \Lambda_0 \cong (P_1,\phi_1)^{\Lambda_1} \rtimes \Lambda_1$ are isomorphic. 
Putting $\vphi_i = \phi_i^{\Lambda_i}$, we get by \cref{thmstar.A} that there exists projections $p_i \in (P_i^{\Lambda_i})_{\vphi_i}$, such that the reductions of $\Lambda_i \curvearrowright P_i^{\Lambda_i}$ by $p_i$ are cocycle conjugate modulo the isomorphism $\Lambda_0 \cong \Lambda_1$ in a state preserving way. 
By amplifying both actions, we may assume that either $p_0 = 1$ or $p_1 = 1$. By interchanging $P_0$ and $P_1$ is necessary, we now assume that $p_0 = 1$. Identifying $\Lambda = \Lambda_0 = \Lambda_1$, writing $N_i = P_i^{\Lambda_i}$ and denoting the action $\Lambda \curvearrowright N_i$ by $\alpha^i$, 
this now means that there exists a state preserving isomorphism $\psi : N_0 \to p_1N_1p_1$ and a generalized 1-cocycle $(v_g)_{g \in \Lambda} \in (N_1)_{\vphi_1}$ with support projection $p_1$ for $\alpha^1 : \Lambda\curvearrowright N_1$, such that $\psi \circ \alpha^0_g = \Ad v_g \circ \alpha^1_g \circ \psi$ for all $g \in \Lambda$.

Put $M = (P_1)_{\phi,\ap}^{\Lambda_1}$ and note that $(N_1)_{\vphi_1} = M_{\vphi_1}$ by \cref{lem.technical-condition}. 
In particular, $v_g$ is a generalized 1-cocycle for the action $\Lambda \curvearrowright M$. Since the action $\Lambda \curvearrowright P_0^{\Lambda_0}$ has no nontrivial globally invariant subspaces, also $\Ad v_g \circ \alpha^1_g$ on $M$ has no such invariant subspaces, and it follows from \cite[Corollary 7.3]{vaes-verraedt;classification-type-III-bernoulli-crossed-products} that $v_g = \chi(g) v^\star \alpha_g^1(v)$ for all $g \in \Lambda$, where $\chi : \Lambda \to \T$ is a character and $v \in M_{\vphi_1,\lambda}$ satisfies $v^\star v = p_1$ and $vv^\star = 1$. Then $\Ad v \circ \psi : N_0 \to N_1$ is a state preserving isomorphism implementing a conjugation between the actions $\Lambda \curvearrowright P_i^{\Lambda_i}$.
\end{proof}

\begin{proof}[Proof of \cref{corstar.D}]
Let $\Lambda_i$ be icc groups in the class $\cC$, and $(P_i,\phi_i)$ be nontrivial amenable factors with normal faithful states such that $(P_i)_{\phi_i,\ap}$ are factors. We only need to show that if $P_i^{\Lambda_i} \rtimes \Lambda_i \times \Lambda_i$ are isomorphic, then the groups $\Lambda_i$ and the pairs $(P_i,\phi_i)$ are isomorphic. 
Assume now that $(P_0,\phi_0)^{\Lambda_0} \rtimes \Lambda_0 \times \Lambda_0 \cong (P_1,\phi_1)^{\Lambda_1} \rtimes \Lambda_1 \times \Lambda_1$ are isomorphic. Applying \cref{thm.non-isomorphism-all-generalized-bernoulli}, and proceeding exactly as in the proof of \cref{corstar.C}, using in particular \cite[Corollary 7.3]{vaes-verraedt;classification-type-III-bernoulli-crossed-products} and the weak mixingness of $\Lambda_0 \times \Lambda_0 \curvearrowright P_0^{\Lambda_0}$, we obtain an isomorphism $\delta : \Lambda_0 \times \Lambda_0 \to \Lambda_1 \times \Lambda_1$ and a state preserving isomorphism $\psi : P_0^{\Lambda_0} \to P_1^{\Lambda_1}$ satisfying $\psi \circ \alpha^0_g = \alpha^1_{\delta(g)} \circ \psi$ for all $g \in \Lambda_0 \times \Lambda_0$. Here we denoted the Bernoulli action $\Lambda_i \times \Lambda_i \curvearrowright P_i^{\Lambda_i}$ by $\alpha^i$.
An argument from \cite[Proof of Theorem 5.4]{popa-vaes;strong-rigidity-generalized-bernoulli-actions} (see also the last two paragraphs of the proof of \cite[Theorem C]{vaes-verraedt;classification-type-III-bernoulli-crossed-products}) now shows the desired result.
\end{proof}

\bibliography{bernoulli-shifts-structural-results}{}
\bibliographystyle{hmalpha}

\end{document}